\newtheorem*{thm*}{Theorem}
\newtheorem{thm}{Theorem}[section]
\newtheorem{lem}[thm]{Lemma}
\newtheorem{prop}[thm]{Proposition}
\newtheorem{cor}[thm]{Corollary}
\newtheorem{conv}[thm]{Convention}
\newtheorem{notation}[thm]{Notation}
\theoremstyle{definition}
\newtheorem{defn}[thm]{Definition}
\theoremstyle{remark}
\newtheorem{rem}[thm]{Remark}
\numberwithin{equation}{section}
\def \Q {{\bf Q}}
\def \wh {\widehat}
\def \to {\rightarrow}
\def \N {\mathbb N}
\def \Z {\mathbb Z}
\def \R {\mathbb R}
\def \E {\mathbb E}
\def \e {\epsilon}
\def \d {\delta}
\def \l {\lambda}
\def \I {\mathcal{I}}
\def \id {{\rm id}}
\newcommand*\normm[1]{\left|\mspace{-1mu}\left|\mspace{-1mu}\left|#1\right|\mspace{-1mu}\right|\mspace{-1mu}\right| }
\newcommand*\Bignormm[1]{\Bigl|\mspace{-1mu}\Bigl|\mspace{-1mu}\Bigl|#1\Bigr|\mspace{-1mu}\Bigr|\mspace{-1mu}\Bigr| }
\begin{document}
\title{Pointwise convergence of some multiple ergodic averages}
\author{Sebasti\'an Donoso}
\address{Einstein Institute of Mathematics, The Hebrew University of Jerusalem, Givat Ram,
Jerusalem 91904, Israel} \email{sdonosof@gmail.com}

\author{Wenbo Sun}
\address{Department of Mathematics, The Ohio State University, 231 West 18th Avenue, Columbus OH, 43210-1174, USA}
 \email{sun.1991@osu.edu}

\subjclass[2010]{Primary: 37A30 ; Secondary: 54H20} \keywords{Pointwise convergence, cubic averages, topological models}

\thanks{The first author is supported by ERC grant 306494. The second author is partially supported by NSF grant 1200971.}

\begin{abstract}
We show that for every ergodic system $(X,\mu,T_1,\ldots,T_d)$ with commuting transformations, the average
\[\frac{1}{N^{d+1}} \sum_{0\leq n_1,\ldots,n_d \leq N-1} \sum_{0\leq n\leq N-1} f_1(T_1^n \prod_{j=1}^d T_j^{n_j}x)f_2(T_2^n \prod_{j=1}^d T_j^{n_j}x)\cdots f_d(T_d^n \prod_{j=1}^d T_j^{n_j}x).  \]
converges for $\mu$-a.e. $x\in X$ as $N\to\infty$. If $X$ is distal, we prove that the average
\[\frac{1}{N}\sum_{i=0}^{N} f_1(T_1^nx)f_2(T_2^nx)\cdots f_d(T_d^nx) \]
converges for $\mu$-a.e. $x\in X$ as $N\to\infty$.

We also establish the pointwise convergence of averages along cubical configurations arising from a system commuting transformations. 

Our methods combine the existence of sated and magic extensions introduced by Austin and Host respectively with ideas on topological models by Huang, Shao and Ye.  
\end{abstract}

\maketitle

\section{Introduction}
\subsection{Pointwise convergence for multiple averages}
Let $(X,\mathcal{X},\mu)$ be a probability space and $T_1,\ldots,T_d$ be measure preserving transformations on $X$.
The study convergence of multiple averages of the form \begin{equation} \label{MultipleAverage}
\frac{1}{N}\sum_{i=0}^{N} f_1(T_1^nx)f_2(T_2^nx)\cdots f_d(T_d^nx)
 \end{equation}   has a rich history, starting from Furstenberg's proof of Szemer\'edi's Theorem via an ergodic theoretical analysis and its combinatorial consequences.
Different approaches had lead to the development of various deep tools in ergodic theory and topological dynamics (see for example \cite{Aus, H, HK05, Tao, Walsh12}).   
The most general result up to now for the $L^2$ convergence of multiple ergodic averages was given by Walsh \cite{Walsh12}, who proved the convergence of \eqref{MultipleAverage} and more general expressions, under the assumption of nilpotency of the group generated by $T_1,\ldots,T_d$.    

However, little is known about the pointwise convergence, i.e. whether (\ref{MultipleAverage}) converges for a.e. $x\in X$ as $N\to\infty$. 
 Bourgain \cite{Bourgain} gave an affirmative answer to this question for the case $d=2, T_{1}=T^{a},T_{2}=T^{b}, a,b\in\mathbb{Z}$ using methods from harmonic analysis.  
Recently, by a new method using topological models, Huang, Shao and Ye \cite{HSY} proved the pointwise convergence of (\ref{MultipleAverage}) for $T_{i}=T^{i},i=1,\dots,d$ under the assumption that $(X,\mu,T)$ is distal.

In this article, we investigate pointwise convergence of multiple averages and obtain partial results. We first obtain a pointwise limit for an easier multiple average:

\begin{thm}[Averaged multiple ergodic average] \label{THM:averagedmultiple}
Let $(X,\mu,T_1,\ldots,T_d)$ be an ergodic system with commuting transformations. Then the average
\[\frac{1}{N^{d+1}} \sum_{0\leq n_1,\ldots,n_d \leq N-1} \sum_{0\leq n\leq N-1} f_1(T_1^n \prod_{j=1}^d T_j^{n_j}x)f_2(T_2^n \prod_{j=1}^d T_j^{n_j}x)\cdots f_d(T_d^n \prod_{j=1}^d T_j^{n_j}x).  \]
converges $\mu$-a.e. $x\in X$ as $N\to\infty$ (here $\prod_{j=1}^d T_j^{n_j}$ is the composition of these transformations). 
\end{thm}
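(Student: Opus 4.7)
The plan is to reduce the averaged multiple ergodic average, via a linear change of variables in the summation, to the combination of a standard multiple ergodic average (controlled in $L^{2}$ by Walsh's theorem) and a multidimensional pointwise Birkhoff-type average for the $\mathbb{Z}^{d}$-action generated by $T_{1},\ldots,T_{d}$.

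Set $R := T_{1}T_{2}\cdots T_{d}$ and $S_{i} := T_{i}R^{-1}$, which are themselves commuting measure-preserving transformations. Using the commutativity of the $T_{i}$'s and substituting $m_{i}=n+n_{i}$ for each $i$, a direct computation rewrites the $i$-th factor as
\[
f_{i}\!\left(T_{i}^{n}\prod_{j}T_{j}^{n_{j}}x\right)=f_{i}\!\left(S_{i}^{n}\prod_{j}T_{j}^{m_{j}}x\right).
\]
The substitution shifts each index set from $[0,N-1]$ to $[n,n+N-1]$, but the overlap is nearly complete and the boundary contributions are $O(1/N)$, so
\[
\frac{1}{N^{d+1}}\!\!\sum_{n,n_{1},\ldots,n_{d}=0}^{N-1}\!\!\prod_{i}f_{i}\!\left(T_{i}^{n}\prod_{j}T_{j}^{n_{j}}x\right)=\frac{1}{N^{d}}\sum_{m\in[0,N-1]^{d}}H_{N}\!\left(\prod_{j}T_{j}^{m_{j}}x\right)+o(1),
\]
where $H_{N}(y):=\frac{1}{N}\sum_{n=0}^{N-1}\prod_{i=1}^{d}f_{i}(S_{i}^{n}y)$.

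Since $S_{1},\ldots,S_{d}$ commute, Walsh's theorem gives an $L^{\infty}$ function $H$ with $H_{N}\to H$ in $L^{2}(\mu)$. The Wiener multidimensional pointwise ergodic theorem applied to the ergodic $\mathbb{Z}^{d}$-action of $T_{1},\ldots,T_{d}$ yields
\[
\frac{1}{N^{d}}\sum_{m}H\!\left(\prod_{j}T_{j}^{m_{j}}x\right)\longrightarrow\int H\,d\mu\qquad\text{for $\mu$-a.e.~}x.
\]
The desired limit is therefore $\int H\,d\mu$, and the remaining task is to show that the diagonal error $\frac{1}{N^{d}}\sum_{m}(H_{N}-H)(\prod_{j}T_{j}^{m_{j}}x)$ tends to $0$ almost everywhere.

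This last step is the main obstacle. A Cauchy--Schwarz estimate combined with measure preservation yields only $L^{2}$-convergence of the error to $0$, and for a generic bounded sequence converging in $L^{2}$, the diagonal Cesàro-type average need not converge pointwise. To upgrade to a.e.\ convergence, the plan is to combine (i) the uniform bound $\|H_{N}\|_{\infty}\le\prod_{i}\|f_{i}\|_{\infty}$, (ii) the Wiener maximal inequality for the $\mathbb{Z}^{d}$-action, and (iii) the triangle decomposition $H_{N}-H=(H_{N}-H_{M})+(H_{M}-H)$ for large $M$: the piece $(H_{M}-H)$ is uniformly controlled in $N\ge M$ by the Wiener maximal function applied to $|H_{M}-H|$, which can be made small in $L^{2}$ by taking $M$ large. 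The piece $(H_{N}-H_{M})$ is the delicate part and must use the specific structure of $H_{N}$ as a multiple ergodic average; this is presumably where the magic and sated extensions mentioned in the abstract enter, replacing the $H_{N}$ by structurally simpler objects on a suitable extension (a nilfactor or pronilsystem) on which pointwise convergence becomes accessible via topological models in the style of Huang--Shao--Ye.
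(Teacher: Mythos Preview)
Your reduction has both a technical slip and a fundamental gap. First, the change of variables is unnecessary: setting $K_N(y)=\frac{1}{N}\sum_{n=0}^{N-1}\prod_i f_i(T_i^n y)$, one has \emph{exactly}
\[
\frac{1}{N^{d+1}}\sum_{n,n_1,\ldots,n_d}\prod_i f_i\Bigl(T_i^n\prod_j T_j^{n_j}x\Bigr)=\frac{1}{N^d}\sum_{m\in[0,N-1]^d}K_N\Bigl(\prod_j T_j^{m_j}x\Bigr),
\]
with no error term. Your substitution $m_j=n+n_j$ introduces an artificial comparison between the cubes $[n,n+N-1]^d$ and $[0,N-1]^d$, and your claim that this discrepancy is $O(1/N)$ is false: when $n$ is of order $N$ the symmetric difference has volume of order $N^d$, so after averaging in $n$ the error is $O(1)$, not $o(1)$.

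More seriously, once you have the correct decomposition above, the entire content of the theorem is the step you leave open: showing $\frac{1}{N^d}\sum_m (K_N-K)\bigl(\prod_j T_j^{m_j}x\bigr)\to 0$ a.e. Your splitting $(K_N-K_M)+(K_M-K)$ together with the Wiener maximal inequality handles only the second piece; for the first you write that ``presumably'' the sated/magic machinery enters, but you give no mechanism, and in fact the paper does \emph{not} proceed through this reduction at all. The paper sidesteps the diagonal-error problem entirely: it passes to a $\mathcal{Z}^*$-sated extension, builds a strictly ergodic topological model $N_d$ for the Furstenberg--Ryzhikov self-joining $(X^d,\mu^F)$ under the $\mathbb{Z}^{d+1}$-action $H_d$ generated by the diagonal transformations and $T_1\times\cdots\times T_d$, and shows (via a Lusin argument combined with multiple recurrence) that the image of the diagonal point $(x,\ldots,x)$ lies in $N_d$ for a.e.\ $x$. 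The averaged multiple ergodic average is then literally an orbit average for the uniquely ergodic $H_d$-action on $N_d$, hence converges \emph{everywhere} for continuous functions; a standard $L^1$ density argument finishes. The sated machinery is used to construct the model and prove its unique ergodicity, not to control any $K_N-K_M$ oscillation on the original space.
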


The difference between the expression in Theorem \ref{THM:averagedmultiple} and (\ref{MultipleAverage}) is that the former  includes an additional average over the {\em diagonal} transformations.

We also deduce an expression for the $L^2$-limit of a multiple average:  

\begin{thm} \label{THM:ergodicmeasurelimit}
Let $(X,\mu,T_1,\ldots,T_d)$ be an ergodic system with commuting transformations. Then there exist a collection of measures \footnote{We use the notation $\mu^{F}_x$ to indicate that these measures come from the Furstenberg-Ryzhikov self-joining $\mu^{F}$. See Section \ref{Sec:MultiplesAverages}.} $\{\mu^{F}_x\}_{x\in X}$ on $X^d$ such that for $\mu$-a.e. $x\in X$, $\mu^{F}_x$ is ergodic for $T_1\times T_2\cdots\times T_d$, and the $L^2$ limit of 
\[\lim_{N\to \infty} \frac{1}{N}\sum_{n=0}^{N_1} f_1(T_1^nx)f_2(T_2^nx)\cdots f_d(T_d^nx)\] is equal to \[\int f_1\otimes f_2\cdots\otimes  f_d d\mu^{F}_x.\]
\end{thm}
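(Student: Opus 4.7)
The plan is to identify the measures $\mu^F_x$ with the conditional measures of the Furstenberg--Ryzhikov self-joining along the first coordinate, and then to prove the $S$-ergodicity of these conditional measures by passing to an appropriate extension of the system. Let $S := T_1\times\cdots\times T_d$ act on $X^d$, and let $\Delta^{(d+1)}:X\to X^{d+1}$ denote the diagonal map $\Delta^{(d+1)}(x)=(x,\ldots,x)$. I would define the self-joining $\mu^F$ on $X^{d+1}$ as the weak-$*$ limit
\[
\mu^F = \lim_{N\to\infty}\frac{1}{N}\sum_{n=0}^{N-1}(\id\times S^n)_*\Delta^{(d+1)}_*\mu.
\]
Walsh's $L^2$-convergence theorem guarantees that for bounded measurable $g,f_1,\ldots,f_d$, the integrals $\int g(x)\cdot\frac{1}{N}\sum_n f_1(T_1^nx)\cdots f_d(T_d^nx)\,d\mu(x)$ converge, which is exactly what is needed to make $\mu^F$ a well-defined probability measure. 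By construction $\mu^F$ has $\mu$ as its first marginal and is invariant under $\id\times S$; it is moreover invariant under the diagonal action of each $T_j$, since these commute with $\id\times S^n$ and preserve $\Delta^{(d+1)}_*\mu$.

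Next I would disintegrate $\mu^F = \int_X \delta_x\otimes\mu^F_x\,d\mu(x)$ over the first-coordinate projection. Writing $F$ for the $L^2(\mu)$-limit of the multiple average, the definition of $\mu^F$ gives, for every bounded $g$,
\[
\int g(x)F(x)\,d\mu(x) = \int g\otimes f_1\otimes\cdots\otimes f_d\,d\mu^F = \int g(x)\Bigl(\int f_1\otimes\cdots\otimes f_d\,d\mu^F_x\Bigr)\,d\mu(x).
\]
Taking $(f_1,\ldots,f_d)$ from a countable dense family in the unit ball of $L^\infty$ and removing the union of exceptional null sets produces a single $\mu$-conull set on which the identity $F(x)=\int f_1\otimes\cdots\otimes f_d\,d\mu^F_x$ holds simultaneously; it extends to arbitrary bounded measurable tuples by continuity. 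Each $\mu^F_x$ is automatically $S$-invariant, since $\id\times S$ preserves $\mu^F$ and fixes the first coordinate.

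The remaining step, which is the main obstacle, is to establish that $\mu^F_x$ is $S$-ergodic for $\mu$-a.e. $x$. This is equivalent to requiring that the $\sigma$-algebra of $(\id\times S)$-invariant sets on $(X^{d+1},\mu^F)$ coincide modulo $\mu^F$ with the pullback of $\mathcal{X}$ under the first-coordinate projection. The natural approach is to exploit the additional diagonal symmetries of $\mu^F$: the enlarged group $H$ generated by $\id\times S$ and by the diagonal transformations $T_j\times\cdots\times T_j$ projects onto the original $\Z^d$-action on the first coordinate, whose action on $(X,\mu)$ is ergodic. If the $H$-action on $(X^{d+1},\mu^F)$ can be shown to be ergodic, then the ergodic decomposition for the normal subgroup $\langle\id\times S\rangle$ is parameterized precisely by the first coordinate, yielding the desired ergodicity of $\mu^F_x$. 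Securing the ergodicity of the $H$-action is where the extension machinery becomes essential: I would pass to a sated extension of $(X,\mu,T_1,\ldots,T_d)$ in Austin's sense, or a magic extension in Host's sense, in which $\mu^F$ acquires an explicit structural description as a relatively independent joining over a suitable characteristic factor. Verifying this structure and then pushing back to $X$ via the factor map, preserving the disintegration and the identification with $F$, is the delicate core of the argument.
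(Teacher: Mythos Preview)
Your setup of $\mu^F$ on $X^{d+1}$, the disintegration over the first coordinate, and the identification of the $L^2$-limit with $\int f_1\otimes\cdots\otimes f_d\,d\mu^F_x$ are all correct and essentially match the paper's argument (the paper arranges the same picture on $X^d$ by declaring $S_1=\id$ and working with the Furstenberg--Ryzhikov joining of $(S_1,\ldots,S_d)$, which amounts to your $X^{d+1}$ after a relabelling).

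The gap is in your ergodicity argument. You claim that once the $H$-action on $(X^{d+1},\mu^F)$ is ergodic, ``the ergodic decomposition for the normal subgroup $\langle\id\times S\rangle$ is parameterized precisely by the first coordinate.'' This does not follow. Ergodicity of $H$ only tells you that the diagonal action permutes the $(\id\times S)$-ergodic components ergodically; it does \emph{not} tell you that the $\sigma$-algebra of $(\id\times S)$-invariant sets coincides with the first-coordinate $\sigma$-algebra. The first coordinate is certainly contained in the $(\id\times S)$-invariants, but the reverse inclusion is the entire content of the theorem, and nothing in your outline addresses it. Passing to a sated or magic extension does not by itself close this gap; one still has to identify the invariant $\sigma$-algebra explicitly.

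This identification is exactly what the paper does, and it is substantial. Working in a $\mathcal{Z}^\ast$-sated extension, the paper proves (Theorem~\ref{Thm:key3}) that the $(T_2\times\cdots\times T_d)$-invariant $\sigma$-algebra of the projection $\nu^F$ onto the last $d-1$ coordinates coincides with $\bigvee_{i\ge 2}\Phi_{\{1,i\}}$, a factor of the \emph{first} copy of $X$. The proof uses a van der Corput--type bound (Lemma~\ref{vdc}) controlling $\|\mathbb{E}(\otimes f_i\mid\mathcal{I}_{R_1\times\cdots\times R_d})\|_2$ by Host seminorms, together with an inductive descent (Lemma~\ref{nvdc}) through the levels of the $\Phi_I$-factors, at each stage using the magic/sated condition to convert seminorm vanishing into measurability with respect to the next level down. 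Only after this structural identification does one know that disintegrating over the invariants is the same as disintegrating over (a factor of) the first coordinate, and hence that the conditional measures $\mu^F_x$ are $S$-ergodic. Your proposal locates the difficulty in proving $H$-ergodicity, but the real obstruction is one level deeper.
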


We remark that the ergodicity of the measures that describe the $L^2$-limit is a non-trivial statement. When all the transformations are the powers of a a single transformation $T$, one can use nilsystems as characteristic factors to reduce the study of the limit to the case when the system itself is a nilsystem. Limit of multiple averages for nilsystems were first described by Ziegler \cite{Z} and then by Bergelson, Host and Kra \cite{BHK} in their study of correlation sequences. Even in this case the description of the limit is non-trivial and the machinery of nilsystems is required. 

In Section \ref{Sec:Distalaverages}, we show that Theorem \ref{THM:ergodicmeasurelimit} implies the following: 

\begin{thm} \label{THM:Averagesdistal}
Let $(X,\mu,T_1,\ldots,T_d)$ be a distal (see Definition \ref{dis}) ergodic system with commuting transformations.
Then the average
\[\frac{1}{N}\sum_{n=0}^{N} f_1(T_1^nx)f_2(T_2^nx)\cdots f_d(T_d^nx) \]
converges for $\mu$-a.e. $x\in X$ as $N\to\infty$.
\end{thm}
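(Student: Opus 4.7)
The plan is to combine Theorem \ref{THM:ergodicmeasurelimit} with the structure of distal topological systems, in the spirit of Huang, Shao and Ye \cite{HSY}. First I would pass to a topologically distal model: a compact metric space $\widetilde X$ carrying commuting homeomorphisms $\widetilde T_1,\ldots,\widetilde T_d$ and an invariant Borel probability measure $\widetilde\mu$, measure-theoretically isomorphic to $(X,\mu,T_1,\ldots,T_d)$, on which the joint action is distal in the topological sense. Such a model exists by the standard correspondence between measure-theoretic and topological distality. After this reduction all $T_i$ may be assumed continuous and $(X,T_1,\ldots,T_d)$ topologically distal.

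Next I would reformulate the average in terms of a single diagonal action. Let $S:=T_1\times\cdots\times T_d$ act on $X^d$, let $\Delta:X\to X^d$ be the diagonal embedding, and set $F:=f_1\otimes\cdots\otimes f_d$. Then
\[
\frac{1}{N}\sum_{n=0}^{N-1}f_1(T_1^n x)\cdots f_d(T_d^n x)=\frac{1}{N}\sum_{n=0}^{N-1}F(S^n\Delta(x)).
\]
Theorem \ref{THM:ergodicmeasurelimit} asserts that the $L^2$ limit equals $\int F\,d\mu^F_x$, with $\mu^F_x$ ergodic for $S$. Since any continuous $F$ vanishing on the orbit closure $Z_x:=\overline{\{S^n\Delta(x):n\in\mathbb Z\}}$ forces the Cesàro sum to vanish identically, $\mu^F_x$ is automatically supported on $Z_x$. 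By a countable-approximation argument over a dense set in $C(X^d)$, the problem reduces to proving that, for $\mu$-a.e.\ $x$, the point $\Delta(x)$ is \emph{generic} for $\mu^F_x$ under the action of $S$, i.e.\ the empirical measures at $\Delta(x)$ converge weak-$*$ to $\mu^F_x$.

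The main obstacle is establishing this genericity. Here distality enters decisively: the product system $(X^d,S)$ inherits distality from the factors, so each $Z_x$ is a \emph{minimal} closed $S$-invariant set. On any minimal topological system, unique ergodicity of the restricted system implies that every point is generic for the unique invariant measure; thus it suffices to prove that $\mu^F_x$ is the \emph{only} $S$-invariant measure supported on $Z_x$ for $\mu$-a.e.\ $x$. My plan would be to induct along the Furstenberg tower of isometric extensions associated with the distal system, and to use the sated and magic extensions of Austin and Host to pin down the location of $\mu^F_x$ at each stage, starting from the equicontinuous (Kronecker) factor --- where unique ergodicity on the orbit closure of $\Delta(x)$ is classical --- and lifting through successive isometric extensions. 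The technical heart of the argument lies in transferring the measure-theoretic ergodicity of $\mu^F_x$ into topological unique ergodicity of $(Z_x,S)$, and in handling the interaction between the $d$ distinct commuting transformations, which is substantially more subtle than the case of powers of a single transformation treated in \cite{HSY}.
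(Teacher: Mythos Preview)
Your proposal contains a genuine gap at the outset: the assertion that $(X,\mu,T_1,\ldots,T_d)$ admits a topologically distal model ``by the standard correspondence between measure-theoretic and topological distality.'' No such correspondence is available in the direction you need. Measure-theoretic distality (Definition~\ref{dis}) is defined through a transfinite tower of \emph{measurable} isometric extensions, and it is not a standard fact that such a system has a model on which the $\Z^d$-action is topologically distal. The familiar implication goes the other way: a topologically distal action is measure-distal for every invariant measure. The Jewett--Krieger--Weiss machinery you would invoke produces strictly ergodic models, not distal ones.

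This matters because the rest of your outline rests entirely on topological distality. In fact, \emph{if} such a model existed your argument would be considerably shorter than you indicate. Products of distal systems are distal, so $(X^d,S)$ would be distal and every orbit closure $Z_x$ minimal; and minimal distal $\Z$-systems on compact metric spaces are uniquely ergodic (an induction along Furstenberg's topological structure theorem, using that minimal equicontinuous extensions of strictly ergodic systems are strictly ergodic). Once one checks --- via a subsequence argument from the $L^2$ convergence --- that $\mu^F_x$ is supported on $Z_x$, genericity of $\Delta(x)$ would follow at once. Your proposed induction along the Furstenberg tower with sated and magic extensions would then be superfluous; conversely, if it is meant as a substitute for the missing model, mixing the topological orbit-closure picture with the purely measurable sated/magic machinery is not explained, and the plan is too vague to assess.

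The paper avoids the issue altogether by staying in the measurable category. It inducts on $d$ and, at each stage, passes to an ergodic distal extension that is magic for $T_1,\ldots,T_d$ (Proposition~\ref{Lem:Magic extension}). The Furstenberg--Zimmer tower is then climbed one measurable isometric extension at a time: Proposition~\ref{Prop:iso} shows that if the tuple $(\phi,\id,\ldots,\id)$ is good for pointwise convergence and $\phi'$ is an isometric extension of $\phi$ lying above the characteristic factor $\bigvee_i\I_{T_i}$, then $(\phi',\id,\ldots,\id)$ is also good. The proof of that proposition is where Theorem~\ref{THM:ergodicmeasurelimit} is actually used --- the ergodicity of $\mu^F_x$ forces a weighted average over the compact fibre group to coincide with $(\phi'\times\id\times\cdots\times\id)_\ast\mu^F_x$, which pins down every weak-$\ast$ limit of the empirical measures. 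Limit ordinals are handled by a routine inverse-limit argument (Proposition~\ref{Prop:inv}), and the base of the tower reduces to $d-1$ transformations via the magic property (Lemma~\ref{Lem:ind}). No topological distality is invoked.
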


 This theorem was proved by Huang, Shao and Ye \cite{HSY} for the case $T_{i}=T^{i}$. In a previous work \cite{DS2}, the authors proved Theorem \ref{THM:Averagesdistal} for $d=2$. The main point to remark is that the ergodicity of the measures given in Theorem \ref{THM:ergodicmeasurelimit} allows to lift the pointwise convergence of the multiple averages through isometric extension. Then standard limiting arguments and the Furstenberg-Zimmer Structure Theorem allow to prove pointwise convergence for distal systems.

\subsection{Pointwise convergence for cubic averages}
Cubic averages are averages along some {\em cubical configurations} for commuting measure preserving transformations. These cubical objects appear in the proof of $L^2$ convergence of multiple averages for commuting transformations when using inequalities derived from the Van der Corput Lemma (see \cite{H} for instance). 
The 2-dimensional cubic average is defined as
\[\frac{1}{N^2} \sum_{n_1,n_2=0}^{N-1} f_{10}(T_1^{n_1} x)f_{01}(T_2^{n_2}x)f_{11}(T_1^{n_1}T_2^{n_2}x), \]
where $f_{10},f_{01}$ and $f_{11}$ are bounded measurable functions. 
The 3-dimensional cubic average is
{\scriptsize
	\[\frac{1}{N^3} \sum_{n_1,n_2,n_3=0}^{N-1} f_{100}(T_1^{n_1} x)f_{010}(T_2^{n_2}x)f_{110}(T_2^{n_1}T_3^{n_2}x)f_{001}(T_3^{n_3}x)f_{101}(T_1^{n_1}T_3^{n_3}x)f_{011}(T_2^{n_2}T_3^{n_3}x)f_{111}(T_1^{n_1}T_2^{n_2}T_3^{n_3}x).\] 
}
More generally, the  d-dimensional cubic average is (we refer the readers to Section \ref{Sec:notation} for the notations)
\begin{equation}\label{equ:cube}
\begin{split}
\frac{1}{N^{d}} \sum_{n_{1},\dots,n_{d}=0}^{N-1}\prod_{\e\in\{0,1\}^{d}, \e\neq 0\dots 0}f_{\e}\Bigl(\prod_{i=1}^{d}T_{i}^{n_{i}\cdot\e_{i}}x\Bigr).
\end{split}
\end{equation}

The $L^{2}$ convergence of (\ref{equ:cube}) was proved by Austin \cite{Aus} and Host \cite{H} using different methods. The pointwise convergence of (\ref{equ:cube}) was proved in various ways by Assani \cite{Assani}, Chu and Frantzikinakis \cite{CF} and Huang, Shao and Ye \cite{HSY} for the case $T_{1}=T_{2}=\dots=T_{d}$. The pointwise convergence of (\ref{equ:cube})
for commuting transformations for $d=2$ was previously established by the authors in \cite{DS}.  

In this paper, we establish the pointwise convergence for (\ref{equ:cube}) in the general setting.
\begin{thm}\label{THM:cubicpointwise}
 Let $d\in\N$ and $(X,\mu,T_{1},\dots,T_{d})$ be an ergodic system with commuting transformations. 
Let $f_{\e}\in L^{\infty}(\mu)$, $\e=\epsilon_1\cdots\epsilon_d \in\{0,1\}^{d}, \e\neq 0\dots 0$ be $2^d-1$ bounded measurable functions. Then the average (\ref{equ:cube})
converges for $\mu$-a.e. $x\in X$ as $N\to\infty$.
\end{thm}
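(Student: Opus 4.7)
The plan is to realize the cubic average as a Birkhoff-type average for a cube-group action on the orbit closure of a diagonal point, and then to conclude by establishing unique ergodicity of that orbit closure. This is a higher-dimensional analogue of the authors' $d=2$ argument in \cite{DS}, with the main new ingredients being Host's magic extension and the topological-model machinery of Huang, Shao and Ye.

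First I would reduce to a convenient model. By the Jewett-Krieger theorem for $\mathbb{Z}^{d}$-actions one may assume that $(X,\mu,T_{1},\dots,T_{d})$ is realized as a uniquely ergodic minimal topological system, and then, following Host, pass to a magic extension $(\tilde X,\tilde\mu, T_{1},\dots,T_{d})$ in which the cubic self-joining $\mu^{[d]}$ on $\tilde X^{2^{d}}$ is well understood. Since pointwise convergence of (\ref{equ:cube}) is preserved under such extensions via the factor map, it is enough to prove the theorem in this extended setting, where we may work with continuous test functions $f_{\epsilon}$.

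Next I would introduce the cube group $\mathcal{G}_{T_{1},\dots,T_{d}}$ acting on $\tilde X^{2^{d}}$, generated by the face transformations associated to $T_{1},\dots,T_{d}$. The average (\ref{equ:cube}) evaluated at $x$ is, by construction, the value at the diagonal point $\Delta(x)=(x,\dots,x)$ of the Birkhoff averages of the continuous function $F((x_{\epsilon})_{\epsilon})=\prod_{\epsilon\neq 0\cdots 0}f_{\epsilon}(x_{\epsilon})$ along a standard F{\o}lner sequence in $\mathcal{G}_{T_{1},\dots,T_{d}}$. To conclude pointwise convergence for $\mu$-a.e.\ $x$ it therefore suffices to show that, for $\mu$-a.e.\ $x$, the orbit closure $\overline{\mathcal{G}_{T_{1},\dots,T_{d}}\Delta(x)}$ is uniquely ergodic with invariant measure $\mu^{[d]}$. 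Here I would combine the magic property, which identifies $\mu^{[d]}$ as the only candidate limit measure via the seminorm control it provides on $\int F\, d\mu^{[d]}$, with the analysis of diagonal orbit closures in minimal cube systems developed by Huang, Shao and Ye.

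The main obstacle I anticipate is exactly this last step: verifying unique ergodicity of the $\mathcal{G}_{T_{1},\dots,T_{d}}$-orbit closure of a generic diagonal. In the single-transformation setting this is the heart of the Huang-Shao-Ye theory and draws heavily on the nilfactor structure, which is not available for general commuting families; instead one must lean entirely on the magic extension. I expect the proof to proceed by induction on $d$, with the base case $d=2$ supplied by \cite{DS}: at the inductive step a face of a magic extension should remain (essentially) magic, so that a van der Corput-type decomposition reduces the $d$-dimensional cubic average to a $(d-1)$-dimensional cubic average along a generic fiber (to which the inductive hypothesis applies), while the remaining single direction is controlled by Birkhoff's theorem together with a Fubini argument governed by the magic property.
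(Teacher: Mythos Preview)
There is a genuine gap in your identification of the cubic average with a Birkhoff average for the full cube group. The group $\mathcal{G}_{T_{1},\dots,T_{d}}$ is a $\mathbb{Z}^{2d}$-action generated by \emph{both} upper and lower face transformations (equivalently, the face transformations together with the diagonals $T_i^{[d]}$). The cubic average (\ref{equ:cube}), however, is obtained by averaging the diagonal point $\Delta(x)$ only along the $\mathbb{Z}^{d}$-subgroup $\langle \mathcal{F}_1^{1},\dots,\mathcal{F}_d^{1}\rangle$. Unique ergodicity of $(\mathbf{Q}_{T_1,\dots,T_d}(\widehat{X}),\mathcal{G}_{T_1,\dots,T_d})$ therefore yields convergence of the \emph{doubly averaged} expression
\[
\frac{1}{N^{2d}}\sum_{m_1,\dots,m_d}\sum_{n_1,\dots,n_d}\prod_{\epsilon}f_{\epsilon}\Bigl(\prod_i T_i^{m_i+n_i\epsilon_i}x\Bigr),
\]
which is the content of Theorem~\ref{dexample}, but says nothing directly about (\ref{equ:cube}) itself: convergence of averages along a subgroup does not follow from unique ergodicity of the ambient action.

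Bridging this gap is in fact where most of the work in the paper lies. One needs a Cauchy--Schwarz/van der Corput inequality (Lemma~\ref{BoundAverage}) bounding the $d$-parameter cubic average by the quantities $S_{\sigma,N}(f_\sigma,x)$, the pointwise convergence of $S_{\vec{1},N}(f,x)$ to $\normm{f}_{\mu,T_1,\dots,T_d}^{2^d}$ (Proposition~\ref{prop:convergenceseminorm}), and then an induction on the level $k$ of the cube (``property-$k$''), not on $d$: for each $\sigma\in D_{k+1}$ one splits $f_\sigma$ into a piece measurable with respect to $\bigvee_{i\in I_\sigma}\mathcal{I}_{T_i}$ (handled by the inductive hypothesis after rewriting as a level-$k$ average) and a piece with $\mathbb{E}(f_\sigma\mid \bigvee_{i\in I_\sigma}\mathcal{I}_{T_i})=0$, whose seminorm vanishes by the magic property and whose contribution is killed pointwise by the bound above. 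Your sketch does not supply this mechanism; the ``van der Corput plus Fubini'' step you describe would at best reproduce the $L^2$ argument, not the pointwise one. A secondary issue is the order of your reductions: the magic extension is a measure-theoretic construction and need not be ergodic or admit a nice topological model, so the paper instead passes to an ergodic $Z$-\emph{sated} extension (Theorem~\ref{Thm:ext}), which is magic by Lemma~\ref{lem:SatedAndMagic}, and only then builds the strictly ergodic model via Proposition~\ref{Prop:ConstructionModel} and Theorem~\ref{Thm:StrictlyModel}.
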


\subsection{Methods and paper organization}
Our strategy to prove pointwise convergence properties is: (a) construct an appropriate topological model for the system $X$; (b) then use this model to study the corresponding averages. This method generalizes the ideas of Huang, Shao and Ye in \cite{HSY0, HSY}, where the same questions were studied for the case $T_{i}=T^{i}$. 

The techniques used in step (b) are straightforward extensions of the ``standard'' methods used in \cite{DS, DS2,HSY0, HSY}, which are developed in Sections \ref{Sec:app1}, \ref{Sec:app2}, \ref{Sec:app3} and \ref{Sec:Distalaverages}.

It is in step (a) where innovations are involved.
While the methods in \cite{HSY0, HSY} relies heavily on the structure theorem of Host and Kra \cite{HK05} for the case $T_{i}=T^{i}$, there is no explicit structure theorem in the general setting. To overcome this difficulty, we use the theories developed by Austin \cite{Aus} and Host \cite{H} in Section \ref{Sec:Sated} to replace the structure theorem, and then use it them to prove the results of the paper in the rest sections.

In Section \ref{Sec:Preliminaries}, we provide the background materials in ergodic theory and topological dynamics. In Section \ref{Sec:Sated}, we introduce more recent tools in ergodic theory (the sated and magic extensions) and prove some general results for later uses.

We prove Theorem  \ref{THM:cubicpointwise} in Section \ref{Sec:CubicPointwise}, Theorem \ref{THM:averagedmultiple} in Section \ref{Sec:MultiplesAverages}, and Theorem \ref{THM:ergodicmeasurelimit} and \ref{THM:Averagesdistal} in Section \ref{Sec:AverageDistal}.
These sections are written in a such a way that the interested reader can read them independently. 

\section*{Acknowledgments} 
We thank Tim Austin for bringing the idea of sated extensions to the authors and for the helpful discussions relating to the materials in Section \ref{Sec:CubicPointwise}.
We also thank Bernard Host, Bryna Kra and Alejandro Maass for useful comments.

\section{Preliminaries and notation} \label{Sec:Preliminaries}
\subsection{Measure preserving and topological dynamical systems}
A {\it measure preserving system} is a tuple $(X,\mathcal{X},\mu,G)$ where $(X,\mathcal{X},\mu)$ is a probability space and $G$ is a group of measure preserving transformations of it. Sometimes we omit writing $\mathcal{X}$ to ease the notation.  We refer to $(X,\mu,G)$ as a {\em $G$-measure preserving system}. A $G$-measure preserving system is {\it ergodic} if $A=gA$ for all $g\in G$ implies that $\mu(A)=0$ or 1 for all $A\in\mathcal{X}$.

A measure preserving system $(X,\mathcal{X},\mu,G)$ is {\em free} it it has no fixed points, {\em i.e.} 
$\mu(\{x: gx=x\})=0$ for every $g\in G$ but the identity transformation.

A {\em factor map} between the measure preserving systems $(X,\mathcal{X},\mu,G)$ and $(Y,\mathcal{Y},\nu,G)$  
is a measurable function from $X$ to $Y$ such that $\pi\circ g=g\circ \pi$ for all $g\in G$ and that projects the measure $\mu$ into the measure $\nu$, {\em i.e.} $\mu(\pi^{-1}A)=\nu(A)$ for all $A\in \mathcal{Y}$. An equivalent formulation of a factor map is given by an invariant $\sigma$-algebra $\mathcal{Y}$ of $\mathcal{X}$. This equivalence is done identifying $\mathcal{Y}$ with $\pi^{-1}(\mathcal{Y})$. We freely use both notions depending on the context. 
We say that $(Y,\mathcal{Y},\nu,G)$ is a {\em factor} of $(X,\mathcal{X},\mu,G)$ or that $(X,\mathcal{X},\mu,G)$ is an {\em extension} of $(Y,\mathcal{Y},\nu,G)$. When the factor map $\pi$ is bi-measurable and bijective (modulo null sets) we say that $\pi$ is an {\em isomorphism} and that $(X,\mathcal{X},\mu,G)$ and $(Y,\mathcal{Y},\nu,G)$ are isomorphic.

A {\it topological dynamical system} $(X,G)$ consists of a compact metric space $X$ and a group of homeomorphisms $G\colon X\to X$ of the space. We say that $(X, G)$ is {\em minimal} if the only closed invariant subsets of $X$ are itself and the empty-set. This is equivalent to say that the orbit $\{gx\colon g\in G\}$ of any point $x\in X$ is dense in X. A {\it (topological) factor map} is an onto
continuous function $\pi\colon X \to Y$ such that $\pi\circ g = g \circ \pi$ for every $g\in G$.

\begin{conv}
	If the group $G$ is spanned by $T_1\ldots,T_d$,
	we also use $(X,\mu,$ $T_1,\ldots,T_d)$ to denote the system $(X,\mu,G)$.  When $T_1,\ldots,T_d$ commute, we think of $(X,\mu,T_1,\ldots,T_d)$ as a $\Z^d$ action where $e_i$, the $i$-th element of the canonical base of $\R^d$, acts as $T_i$. We use the same convention for topological systems.
\end{conv}

\subsection{Existence of strictly ergodic models}
A topological system $(X, G)$ is {\it strictly ergodic} if it is minimal and there is a unique $G$-invariant probability measure on $X$. 
We say that $(\wh{X},G)$ is a {\it strictly ergodic model} for $(X,\mu,G)$ if  $(\wh{X},G)$ is strictly ergodic with unique $G$-invariant measure $\wh{\mu}$ and $(X,\mu,G)$ is isomorphic to $(\wh{X},\wh{\mu},G)$.

The Jewett-Krieger Theorem \cite{J,K} states that every ergodic $\Z$-measure preserving system is measure theoretical isomorphic a strictly ergodic model. We use a generalization of this result in the commutative case. 
\begin{thm}[Weiss-Rosenthal, \cite{R,W}] \label{Weiss}
Let $G$ be a countable abelian group and let $\pi\colon(X,\mathcal{X},\mu,G)\to (Y,\mathcal{Y},\nu,G)$ be a factor map between ergodic and free $G$-systems. Let  $(\wh{Y},G)$ be a strictly ergodic model for $(Y,\mathcal{Y},\nu,G)$. Then there exist a strictly ergodic model $(\wh{X}, G)$ for $(X,\mathcal{X},\mu,G)$ together with a topological factor map $\wh{\pi}:\wh{X}\to \wh{Y}$ such that the following diagram commutes:
\begin{figure}[h]
 \begin{tikzpicture}
  \matrix (m) [matrix of math nodes,row sep=3em,column sep=4em,minimum width=2em,ampersand replacement=\&]
  {
     X \& \widehat{X} \\
     Y \& \widehat{Y} \\};
  \path[-stealth]
     (m-1-1) edge node [left] {$\pi$} (m-2-1)
    (m-1-1) edge node [above] {$\Phi$} (m-1-2)
    (m-1-2) edge (m-1-1)
    (m-2-2) edge (m-2-1)
    (m-1-2) edge node [right] {$\widehat{\pi}$} (m-2-2)
    (m-2-1) edge node [below] {$\phi$} (m-2-2);
\end{tikzpicture}
 \end{figure}
 
 where
$\Phi$ and $\phi$ are measure preserving isomorphisms and $\pi\circ \Phi=\phi \circ \widehat{\pi}$.
\end{thm}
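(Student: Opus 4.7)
The plan is to carry out a relative version of the Jewett--Krieger construction, coding $X$ as a subshift fibered over the given strictly ergodic model $\wh Y$. First I would use the isomorphism $\phi\colon Y\to \wh Y$ to replace the measure-theoretic factor, so that we may regard $(X,\mathcal X,\mu,G)$ as a measurable extension of the topological system $(\wh Y,G)$ via the $G$-equivariant measurable map $\tilde\pi=\phi\circ\pi$. The task then reduces to constructing a topological extension $\wh X$ of $\wh Y$ which is isomorphic as a measure-preserving system to $(X,\mu,G)$ and which is itself strictly ergodic; the commutativity of the diagram will be built into the construction.

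Next I would choose a refining sequence $\{\mathcal P_n\}_{n\ge 1}$ of finite measurable partitions of $X$ whose join generates $\mathcal X$ modulo $\tilde\pi^{-1}(\mathrm{Bor}(\wh Y))$. For each $n$, encode $x\in X$ by the labels of its $G$-orbit: set $\omega_n(x)(g)=j$ iff $gx$ lies in the $j$-th atom of $\mathcal P_n$. Combining with $\tilde\pi$ yields a measurable $G$-equivariant map
\[
\Phi\colon X\longrightarrow \wh Y\times \prod_{n\ge 1}\{1,\dots,k_n\}^{G},\qquad x\longmapsto \bigl(\tilde\pi(x),(\omega_n(x))_n\bigr),
\]
where $G$ acts on the right-hand side diagonally (by the given action on $\wh Y$ and by shifts on the symbolic coordinates). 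Let $\wh X$ be the closure of $\Phi(X)$, $\wh\mu:=\Phi_{\ast}\mu$, and let $\wh\pi\colon \wh X\to\wh Y$ be the projection on the first factor. Then $\wh\pi$ is continuous and $G$-equivariant with $\wh\pi\circ\Phi=\phi\circ\pi$, and $\Phi$ is a measure-theoretic isomorphism onto $(\wh X,\wh\mu,G)$ because the partitions $\mathcal P_n$ separate points modulo null sets.

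The heart of the proof is to select the partitions $\mathcal P_n$ so that $(\wh X,G)$ is strictly ergodic, i.e.\ minimal with unique invariant measure $\wh\mu$. This is done via a generic/Baire-category argument inside the Polish space of finite measurable partitions of $X$: one shows that the set of partitions for which the coded subshift has the desired properties is residual. The principal tool is the Ornstein--Weiss Rokhlin tower theorem for countable amenable groups (applicable since $G$ is abelian, hence amenable); using the freeness hypothesis to build towers with arbitrarily large F\o{}lner shapes, and the ergodicity of $\mu$ to identify limiting frequencies, one arranges that each $\mathcal P_n$ has uniform visit statistics along $\mu$-a.e.\ orbit, relatively over $\wh Y$.

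The main obstacle, and the technical crux of the Weiss--Rosenthal theorem, is the unique ergodicity step: to show that every $G$-invariant probability measure on $\wh X$ equals $\wh\mu$, one must upgrade $\mu$-a.e.\ convergence of Birkhoff averages of cylinder indicators to \emph{uniform} convergence along F\o{}lner sequences, and do so while preserving the fibration over the already strictly ergodic base $\wh Y$. Once this uniform control is established, any $G$-invariant probability on $\wh X$ must agree with $\wh\mu$ on all cylinders and on $\wh\pi$-pullbacks of $C(\wh Y)$, hence everywhere; minimality follows from the same uniform coverage, and commutativity of the diagram is automatic from the construction. I expect the relative Rokhlin tower construction --- and its compatibility with the prescribed topology on $\wh Y$ --- to be the delicate point.
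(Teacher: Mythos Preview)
The paper does not prove this theorem: it is quoted from the literature (Weiss \cite{W} and Rosenthal \cite{R}) as a known result and used as a black box throughout. So there is no ``paper's own proof'' to compare against.

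That said, your outline is a faithful sketch of the strategy in the original sources. The relative Jewett--Krieger construction does proceed by symbolically coding $X$ over the given model $\wh Y$ via a refining sequence of partitions, and the delicate point is exactly the one you flag: choosing the partitions so that the resulting subshift is strictly ergodic. One caution: the generic/Baire-category phrasing you give is closer in spirit to some presentations of the absolute Jewett--Krieger theorem; the relative version in Weiss and Rosenthal is carried out by an explicit inductive refinement (``painting'' Rokhlin towers to force uniform block frequencies) rather than a residuality argument, and the compatibility with the prescribed topology on $\wh Y$ has to be maintained by hand at each stage. Your sketch is at the right level of an overview but would need substantial expansion --- in particular the uniform-frequency step and the fact that the resulting $\wh X$ is minimal, not just uniquely ergodic on the support of $\wh\mu$ --- before it constitutes a proof.
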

In this case, we say that $\wh{\pi}\colon \wh{X}\to \wh{Y}$ is a {\it topological model} for $\pi\colon X\to Y$.

\subsection{Notation about cubes}\label{Sec:notation}
For $d\in\mathbb{N}$, $[d]$ denotes the set $\{1,\ldots,d\}$ and $V_d$ denotes the $d$-dimensional cube $\{0,1\}^d$. If $X$ is a set, we index points in $X^{[d]}=X^{2^d}$ using the coordinates in $V_d$. So a point $\vec{x}\in X^{[d]}$ is written as $\vec{x}=(x_{\epsilon})_{\epsilon \in V_d}$.

For every $\e\in V_d$ and $1\leq i\leq d$, denote $\e_{i}$ to be the $i$-th coordinate of $\e$ and write $\vert\e\vert=\e_{1}+\dots+\e_{d}$. 
	For $\e,\e'\in V_d$, we say that $\e\leq\e'$ if $\e_{i}\leq \e'_{i}$ for all  $1\leq i\leq d$. For $\e,\e'\in V_d$, let $\e\cap\e'\in V_d$ be the element such that $(\e\cap\e')_{i}=\min\{\e_{i},\e'_{i}\}$ for all $1\leq i\leq d$.

If $f_{\epsilon}$, $\epsilon \in V_d$ are $2^d$ measurable functions we let $\bigotimes_{\epsilon \in V_d} f_{\epsilon}$ denote the tensor product of the $f_{\epsilon}$'s, {\em i.e. }\footnote[1]{In this paper, we assume all the functions are real-valued to ease the notations, but the results hold for complex-valued functions as well.}

\[ \bigotimes_{\epsilon \in V_d} f_{\epsilon}\left(\vec{x}\right)=\prod_{\epsilon \in V_d} f_{\epsilon}(x_{\epsilon}) \]

If $\phi\colon X\to Y$ is a function, we let $\phi^{[d]}$ denote $\phi\times\cdots \times \phi$ ($2^d$ times), {\em i.e.} $\phi^{[d]}((x_{\epsilon})_{\epsilon \in V_d})=(\phi(x_{\epsilon}))_{\epsilon\in V_d}$.

\subsection{Host's measures} \label{Sec:HostMeasures}
For a measure preserving system $(X,\mathcal{X},\mu,G)$ and $T_{1},\dots,T_{d}\in G$,\footnote[2]{When we say ``$(X,\mu,T_{1},\dots,T_{d})$ is a measure preserving system'', we mean the group $G$ is spanned by $T_{1},\dots,T_{d}$. But when we say ``$(X,\mu,G)$ is a measure preserving system and $T_{1},\dots,T_{d}\in G$'', $G$ may contain more transformations than $T_{1},\dots,T_{d}$.} let $\I_{T_{1},\dots,T_{d}}(X)$ denote the sub $\sigma$-algebra of $\mathcal{X}$ invariant under $T_{1},\dots,T_{d}$. When there is no confusion, we write $\I_{T_{1},\dots,T_{d}}=\I_{T_{1},\dots,T_{d}}(X)$ for short.

\begin{defn}
	Let $(X,\mu, G)$ be an ergodic measure preserving system and $T_{1},\dots,T_{d}\in G$. The {\em Host measure} $\mu_{T_1,\ldots,T_d}$ is defined inductively as follows: 
	for $d=1$, define $$\mu_{T_1}=\mu \times_{\mathcal{I}_{T_{1}}}\mu.$$
	For $d\geq 1$, let
	$$\mu_{T_1,\ldots,T_{d}}=\mu_{T_1,\ldots,T_{d-1}}\times_{\I_{T_{d}^{[d-1]}}(X^{[d-1]})} \mu_{T_1,\ldots,T_{d-1}}.$$ This means that for all bounded measurable functions $f_{\epsilon}$, $\epsilon \in V_d$, we have
	
	\[\int_{X^{[d]}} \bigotimes_{\epsilon \in V_{d}} f_{\epsilon} ~ d\mu_{T_1,\ldots,T_{d}}=\int_{X^{[d-1]}} \mathbb{E}\Bigl(\bigotimes_{\eta \in V_{d-1}} f_{\eta 0}\vert\I_{T_{d}^{[d-1]}} \Bigr )\mathbb{E}\Bigl (\bigotimes_{\eta \in V_{d-1}} f_{\eta 1}\vert\I_{T_{d}^{[d-1]}} \Bigr )~ d\mu_{T_1,\ldots,T_{d-1}}.\   \] 
\end{defn}

For $i\leq d$ we define the upper and lower $i$-face transformations on $X^{[d]}$ to itself as \[(\mathcal{F}_i^{0}(x_{\epsilon})_{\epsilon\in V_d})_{\epsilon}=\begin{cases} T_ix_{\epsilon} ~ \text{ if } ~ \epsilon_i=0 \\ x_{\epsilon} ~ \text{ if } ~ \epsilon_i=1  \end{cases}
~ \text{  and  } ~ ~ \mathcal{F}_i^{1}(x_{\epsilon})_{\epsilon\in V_d}=\begin{cases} x_{\epsilon} ~ \text{ if } ~ \epsilon_i=0 \\ T_ix_{\epsilon} ~ \text{ if } ~ \epsilon_i=1  \end{cases} \] 

Remark that $\mathcal{F}_i^0 \mathcal{F}_i^1=T_i^{[d]}$. The transformations $\mathcal{F}_i^0$ and $\mathcal{F}_i^1$, $i=1,\ldots,d$ preserve the measure $\mu_{T_1,T_2,\ldots,T_d}$.

\begin{defn}
	Let $(X,\mu, G)$ be an ergodic measure preserving system and $T_{1},\dots,T_{d}\in G$.
	The {\it Host seminorm} for $f\in L^{\infty}(\mu)$ is the quantity
	\[\normm{f}_{\mu,T_{1},\ldots,T_{d}}\coloneqq \left( \int_{X^{[d]}} \bigotimes_{\epsilon \in V_d} f ~ d\mu_{T_{1},\ldots,T_{d}} \right)^{1/2^d}. \]
\end{defn}

The following properties of the Host seminorms appear basically in \cite{H}, Sections 2 and 4. 
\begin{thm}\label{ine} Let $(X,\mu,G)$ be a system with commuting transformations and $T_1,\ldots,T_d\in G$. Then
	\begin{enumerate}[leftmargin=*]
		\setlength\itemsep{1em}
		\item (Cauchy-Schwartz) $$\Bigl\vert\int_{X^{[d]}} \bigotimes_{\epsilon \in V_d} f_{\epsilon} ~ d\mu_{T_1,\ldots,T_d}\Bigr\vert \leq \prod_{\epsilon\in V_d} \normm{f_{\epsilon}}_{\mu,T_1,\ldots,T_d}. $$
		
		\item $\normm{f}_{\mu,T_1,T_2,\ldots,T_d}=\normm{f}_{\mu,T^{-1}_1,T_{2},\ldots,T_d}$.

		\item $\normm{\cdot}_{\mu,T_1,\ldots,T_d}$ does not depend on the order of the transformations, {\em i.e.} $\normm{\cdot}_{\mu,T_1,\ldots,T_d}=\normm{\cdot}_{\mu, T_{\sigma(1)},\ldots,T_{\sigma(d)}}$ for every permutation $\sigma\colon [d]\to [d]$.

		\item If $\normm{f}_{\mu,T_1,\ldots,T_d}=0$, then $\mathbb{E}(f\vert \bigvee_{i=1}^d I_{T_i})=0$. 
		
		\item If $\pi\colon (X,\mu,T_1,\ldots,T_d)\to (Y,\nu,S_1,\ldots,S_d)$ is a factor map, then 
		$$\normm{f}_{\nu,S_1,\ldots,S_d}=\normm{f\circ \pi}_{\mu,T_1,\ldots,T_d} $$

	\item If $\mu=\int \mu_x d\mu(x)$ is the ergodic decomposition of $\mu$ under $T_1,\ldots,T_d$ then 
	
	$$\normm{f}_{\mu,T_1,\ldots,T_d}^{2^d}=\int \normm{f}_{\mu_x,T_1,\ldots,T_d}^{2^d} d\mu(x) $$
	\end{enumerate}
\end{thm}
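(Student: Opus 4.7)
The plan is to establish all six items by induction on $d$, with the base $d=1$ following from the definition $\mu_{T_1}=\mu\times_{\mathcal{I}_{T_1}}\mu$ and the classical facts about conditional expectations onto $\mathcal{I}_{T_1}$. The whole proof exploits the key observation that the relative-product construction
\[\mu_{T_1,\ldots,T_d}=\mu_{T_1,\ldots,T_{d-1}}\times_{\I_{T_{d}^{[d-1]}}(X^{[d-1]})} \mu_{T_1,\ldots,T_{d-1}}\]
splits the integrals on $X^{[d]}$ into a pair of identical integrals on $X^{[d-1]}$ that are then conditionally independent given $\I_{T_{d}^{[d-1]}}$.

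First, for (1) I would write
\[\int \bigotimes_\epsilon f_\epsilon\,d\mu_{T_1,\ldots,T_d}=\int \mathbb{E}\Bigl(\bigotimes_\eta f_{\eta 0}\mid\I_{T_d^{[d-1]}}\Bigr)\mathbb{E}\Bigl(\bigotimes_\eta f_{\eta 1}\mid\I_{T_d^{[d-1]}}\Bigr)\,d\mu_{T_1,\ldots,T_{d-1}},\]
apply Cauchy--Schwartz in $L^2(\mu_{T_1,\ldots,T_{d-1}})$, dominate the conditional expectation squared by the conditional expectation of the square, and then apply the induction hypothesis twice, once to each side $\eta 0$ and $\eta 1$. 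This yields (1). For (2), note $\I_{T_1}=\I_{T_1^{-1}}$, so $\mu_{T_1}$ is unchanged when $T_1$ is replaced by $T_1^{-1}$; the induction step only uses $\I_{T_d^{[d-1]}}$, so the claim propagates. Property (5) is immediate: the pushforward $(\pi^{[d]})_{*}\mu_{T_1,\ldots,T_d}=\nu_{S_1,\ldots,S_d}$ by induction, since relative products are functorial and $\pi$ intertwines the invariant $\sigma$-algebras. Property (6) follows similarly from the fact that, along an ergodic decomposition $\mu=\int\mu_x\,d\mu(x)$ under the full group, each $\I_{T_i}(X,\mu_x)$ is trivial on the $\mu_x$-component modulo null sets affecting only a $\mu$-null set of $x$, and the construction of the Host measure commutes with this decomposition.

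The two harder pieces are (3) and (4). For (3), the point is that the definition is symmetric in the roles of the upper and lower $d$-face (switching $\eta 0\leftrightarrow \eta 1$ corresponds to swapping the two copies in the relative product), so $\mu_{T_1,\ldots,T_d}$ is invariant under the coordinate symmetry of $V_d$ that flips the $d$-th coordinate. Combining this with the induction hypothesis (which gives invariance under any permutation of $T_1,\ldots,T_{d-1}$), I would reduce to proving that swapping $T_{d-1}$ and $T_d$ leaves the seminorm invariant. This I would do by showing that the two measures $\mu_{T_1,\ldots,T_{d-2},T_{d-1},T_d}$ and $\mu_{T_1,\ldots,T_{d-2},T_d,T_{d-1}}$ coincide on tensors; both can be rewritten as iterated relative products over the commuting $\sigma$-algebras $\I_{T_{d-1}^{[d-2]}}$ and $\I_{T_d^{[d-2]}}$ applied to $\mu_{T_1,\ldots,T_{d-2}}$, and the resulting diagonal tower of conditional expectations matches because the two $\sigma$-algebras of invariants commute on $X^{[d-2]}$ in an appropriate sense (this is where the commutativity of the transformations enters).

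For (4), assuming $\normm{f}_{\mu,T_1,\ldots,T_d}=0$, I would use (1) to conclude that $\int f(x_{0\cdots 0})\prod_{\epsilon\neq 0\cdots 0}g_\epsilon(x_\epsilon)\,d\mu_{T_1,\ldots,T_d}=0$ for arbitrary bounded $g_\epsilon$, then pick each $g_\epsilon$ to be $1$ except on the coordinates $\epsilon=e_i$ where I place a test function $h_i\in L^\infty(\I_{T_i})$; unwinding the definition of $\mu_{T_1,\ldots,T_d}$ and iteratively integrating, this expression reduces to $\int f\cdot\prod_{i=1}^d h_i\,d\mu$, forcing $\mathbb{E}(f\mid\bigvee_{i=1}^d\I_{T_i})=0$. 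The main obstacle is Step (3), namely obtaining full permutation-invariance from the inherently asymmetric inductive definition; the remaining items are essentially formal consequences of the relative-product machinery, and all of this closely follows Sections~2 and~4 of Host's paper.
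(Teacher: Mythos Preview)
The paper does not give its own proof of this theorem; it simply records that these properties ``appear basically in \cite{H}, Sections 2 and 4.'' Your sketch follows exactly that source and is essentially correct, so there is nothing to compare at the level of approach.

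One small correction: in your treatment of (6), the assertion that each $\I_{T_i}(X,\mu_x)$ is trivial on the $\mu_x$-component is false---the ergodic decomposition is taken under the full group $\langle T_1,\ldots,T_d\rangle$, and the individual $T_i$-invariant $\sigma$-algebras need not be trivial on those ergodic components. The correct argument (which you also allude to) is simply that the relative-product construction commutes with the ergodic decomposition, i.e.\ $\mu_{T_1,\ldots,T_d}=\int(\mu_x)_{T_1,\ldots,T_d}\,d\mu(x)$, proved by induction on $d$ using that conditioning on $\I_{T_d^{[d-1]}}$ refines conditioning on the $\sigma$-algebra of $\langle T_1,\ldots,T_d\rangle$-invariant sets.
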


Because of the second property, if $I=\{T_1,\ldots,T_k\}$, we can write $\normm{\cdot}_{\mu,I}\coloneqq\normm{\cdot}_{\mu,T_1,\ldots,T_k}$ for short.

\section{Sated and magic extensions} \label{Sec:Sated}
The notions of sated and magic extensions were introduced by Austin \cite{A} and Host \cite{H} respectively, in order to give an ergodic theoretical proof of Tao's norm convergence for multiple averages in a commutative group \cite{Tao}. 
\subsection{Sated extensions}
The following definitions were introduced in \cite{A}. 
\begin{defn}
	Let $(X,\mathcal{X},\mu,G)$ be a measure preserving system. Let $\mathcal{X}_1,\ldots,\mathcal{X}_d$ and $\mathcal{Z}_1,\ldots \mathcal{Z}_d$ be factors of $X$ with $\mathcal{Z}_i\subseteq \mathcal{X}_i\subseteq\mathcal{X}$. We say that $\mathcal{X}_1,\ldots,\mathcal{X}_d$ are {\it relatively independent} \footnote{Do not confuse with the term relatively independent product which is classical in ergodic theory.} over $\mathcal{Z}_1,\ldots \mathcal{Z}_d$ if 
	
	\[ \int_X f_1\cdots f_d d\mu=\int_X \mathbb{E}(f_1\vert \mathcal{Z}_1)\cdots \mathbb{E}(f_d \vert \mathcal{Z}_d)d\mu  \]  for all $f_i$ measurable with respect to $\mathcal{X}_i$. 
\end{defn}

\begin{defn}
	A class $\mathcal{C}$ of $G$-measure preserving systems is {\it idempotent} if it contains the trivial system and is closed under inverse limits, joinings and under measure theoretical isomorphisms.
\end{defn}

An important idempotent class is the one defined by the triviality of the action of a given subgroup $\Lambda\subseteq G$. This class is denoted by $Z^{\Lambda}$. When $\Lambda$ is spanned by a single transformation $T$, we write $Z^{T}=Z^{\Lambda}$ for short. 

When $G=\Z^d$ is spanned by commuting transformations $T_1,\ldots, T_d$ and $J\subseteq \{1,\ldots,d\}$ we write $Z^{J}$ to denote the idempotent class $Z^{\Lambda}$, where $\Lambda$ is the subgroup spanned by $T_i$ $i\in J$. Note that this notation only makes sense when we have fixed the order of the generators.

For example, if $(X,\mu,T_1\ldots,T_d)$ is a measure preserving system, the $\sigma$-algebra of invariant sets under $T_i$ for $i\in J\subseteq \{1,\ldots,d\}$ is a factor map which belongs to the class $Z^{J}$.

For any idempotent class $\mathcal{C}$, every measure preserving system $(X,\mu,G)$ has a {\em maximal factor} (unique up to isomorphism) which belongs to the class $\mathcal{C}$ (Lemma 2.2.2 in \cite{A}). We let $\mathcal{C}_X$ and $\pi_{\mathcal{C}_X}$ denote the $\sigma$-algebra and the factor map associated to the maximal factor of $X$.

\begin{defn}
	Let $\mathcal{C}$ be an idempotent class  of $G$-measure preserving systems. We say that the measure preserving system $(X,\mu,G)$ is {\it $\mathcal{C}$-sated} if for every extension $\pi\colon(\widetilde{X},\mu',G)\to (X,\mu,G)$, $X$ and $\pi_{\mathcal{C}}(\widetilde{X})$ are relatively independent over $\pi_{\mathcal{C}}(X)$ (as factor systems of $\widetilde{X}$).
\end{defn}

The existence of $\mathcal{C}$-sated extensions was proved by Austin \cite{A} in order to derive the $L^2$ convergence of multiple averages for commuting transformations.
We state this theorem in generality:
\begin{thm}[Austin, \cite{A}]
	Let $(X,\mu,G)$ be a measure preserving system and $(\mathcal{C}_i)_{i\in I}$ be a countable collection of idempotent classes (of $G$-measure preserving systems). Then there exists an extension $\pi\colon (X',\mu',G)\to(X,\mu,G)$ such that $(X',\mu',G)$ is $\mathcal{C}_i$-sated for all $i\in I$. Furthermore, $(X',\mu',G)$ is a joining of $(X,\mu,G)$ the maximal $\mathcal{C}_i$-factors of $X'$.   
\end{thm}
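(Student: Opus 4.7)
The plan is to reduce the theorem to the single-class case and then apply a diagonal-type argument. For a single idempotent class $\mathcal{C}$, I would define the \emph{satedness defect} of a system $X$ at a function $f\in L^2(\mu)$ by
\[ \alpha_{\mathcal{C}}(X,f) := \sup\Bigl\{ \bigl\|\mathbb{E}(f\circ\pi \mid \mathcal{C}_{\tilde X})\bigr\|_{L^2(\mu')}^2 : \pi\colon (\tilde X,\mu',G)\to (X,\mu,G) \Bigr\}, \]
where the supremum runs over all extensions of $(X,\mu,G)$. Taking $\pi=\mathrm{id}$ yields $\alpha_{\mathcal{C}}(X,f)\geq\|\mathbb{E}(f\mid\mathcal{C}_X)\|_2^2$, and a short calculation using the definition of relative independence shows that $\mathcal{C}$-satedness of $X$ is equivalent to equality here for every $f\in L^2(\mu)$.

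To produce a $\mathcal{C}$-sated extension of a given $(X,\mu,G)$, I would fix an $L^2$-dense countable family $\{f_k\}_{k\geq 1}\subseteq L^\infty(\mu)$ and enumerate the pairs $(k,\varepsilon)$ with $\varepsilon>0$ rational so that each pair appears infinitely often. I would then build an inductive tower
\[ X = X_0 \leftarrow X_1 \leftarrow X_2 \leftarrow \cdots, \]
where at stage $n$ (handling the pair $(k_n,\varepsilon_n)$) the extension $X_{n+1}\to X_n$ is chosen so as to realize the value $\alpha_{\mathcal{C}}(X_n,g_n)$ within $\varepsilon_n$, with $g_n$ the pullback of $f_{k_n}$ to $X_n$. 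Using that $\mathcal{C}$ is closed under joinings, one arranges $X_{n+1}$ to be the joining of $X_n$ with the maximal $\mathcal{C}$-factor of the witnessing extension. Let $X_\infty$ be the inverse limit. The quantities $\|\mathbb{E}(f_k\mid\mathcal{C}_{X_n})\|_2^2$ are non-decreasing in $n$ and bounded by $\|f_k\|_2^2$, hence their increments vanish; combined with the $\varepsilon_n$-optimality in the construction this forces $\alpha_{\mathcal{C}}(X_\infty,f_k)=\|\mathbb{E}(f_k\mid\mathcal{C}_{X_\infty})\|_2^2$ for every $k$, and density carries the conclusion to all of $L^2$.

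For the countable collection $(\mathcal{C}_i)_{i\in I}$, I would interleave the above over both the class index and the test function: enumerate triples $(i,k,\varepsilon)$ and at each stage perform the single-class step with respect to $\mathcal{C}_i$ at the appropriate function, cycling through all triples infinitely often. The inverse limit $X'$ of the resulting tower is then $\mathcal{C}_i$-sated for every $i\in I$. The ``furthermore'' claim follows because each elementary step takes a joining with a maximal $\mathcal{C}_i$-factor, and this joining property is preserved under the inverse limit.

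The main obstacle is the interleaving step: enlarging $X_n$ to correct the $\mathcal{C}_i$-satedness defect might \emph{a priori} create new $\mathcal{C}_j$-structure for some $j\neq i$ and spoil previously achieved satedness. The essential observation is that $\|\mathbb{E}(f_k\mid (\mathcal{C}_j)_{X_n})\|_2^2$ is monotone non-decreasing in $n$ simultaneously for \emph{every} $(j,k)$ (since any factor of $X_n$ lifts to a factor of $X_{n+1}$) and uniformly bounded by $\|f_k\|_2^2$. A telescoping argument applied separately to each pair $(j,k)$ then forces the gap between $\alpha_{\mathcal{C}_j}(X_n,\cdot)$ and the corresponding conditional $L^2$-norm to collapse in the limit. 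Making this bookkeeping precise, and verifying that the relevant $\sigma$-algebras behave well under inverse limits of $G$-systems, is where the technical content of the argument lies.
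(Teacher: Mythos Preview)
Your proposal is correct and follows essentially the same route as Austin's argument, which the paper reproduces (with an added ergodicity refinement) in the proof of Theorem~\ref{Thm:ext}: define the satedness defect via the supremum of $\|\mathbb{E}(f\circ\pi\mid\mathcal{C}_{\tilde X})\|_2$ over extensions, build a tower that nearly realizes this supremum at a dense sequence of test functions, and take the inverse limit.

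The only point of divergence is in the passage to countably many classes. You interleave at the level of \emph{single elementary steps}, cycling through triples $(i,k,\varepsilon)$ and relying on simultaneous monotonicity of all the quantities $\|\mathbb{E}(f_k\mid(\mathcal{C}_j)_{X_n})\|_2$ to control the interaction between classes. The paper instead applies the completed single-class theorem as a black box: at stage $k$ it replaces $X_k$ by a fully $\mathcal{C}_{a_k}$-sated extension, cycling through the classes infinitely often, and then argues that an inverse limit of systems that are each $\mathcal{C}_i$-sated cofinally remains $\mathcal{C}_i$-sated. Both arguments work; the paper's version is organizationally a bit cleaner (no need to track the $\varepsilon$-bookkeeping across different classes), while yours avoids the nested inverse limit and makes the ``furthermore'' clause slightly more transparent, since every finite stage is manifestly a joining of $X$ with systems in the various $\mathcal{C}_i$.
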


For our purposes, we strengthen this result by adding an ergodicity condition for some specific idempotent classes. The bulk of the proof follows from \cite{A}, but we provide details for completion:
\begin{thm}\label{Thm:ext}
	Let $(X,\mu,G)$ be an ergodic measure preserving system and $(\mathcal{C}_i)_{i\in I}$ be a countable collection of idempotent classes (of $G$-measure preserving systems). If for all $i\in I$, $\mathcal{C}_i=\bigvee_{k} Z^{\Gamma_{i,k}}$ for subgroups $\Gamma_{i,k}\subseteq G$, then there exists an extension $\pi\colon (X',\mu',G)\to(X,\mu,G)$ such that $(X',\mu',G)$ is ergodic and is $\mathcal{C}_i$-sated for all $i\in I$.
\end{thm}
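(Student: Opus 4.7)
The plan is to follow Austin's construction of sated extensions while ensuring ergodicity at every stage. Austin's argument, for a countable family of idempotent classes, builds a tower $(X_n,\mu_n,G)_{n\geq 0}$ with $X_0=X$, where each $X_{n+1}$ is obtained from $X_n$ by choosing (according to a diagonal enumeration of $I\times\mathbb{N}$) a class $\mathcal{C}_i$ together with an extension $\widetilde{X}_n\to X_n$ witnessing a failure of $\mathcal{C}_i$-saturation, and setting $X_{n+1}$ to be the joining $X_n \vee \pi_{\mathcal{C}_i}(\widetilde{X}_n)$ realized inside $\widetilde{X}_n$. The inverse limit is $\mathcal{C}_i$-sated for every $i\in I$.

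The new ingredient is the following lemma: if $(X,\mu,G)$ is ergodic and fails to be $\mathcal{C}_i$-sated, then there is an \emph{ergodic} extension $\widetilde{X}\to X$ that already witnesses the failure. To prove it, start with any extension $Y\to X$ witnessing non-saturation, and ergodically decompose the measure on $Y$ under $G$; since $X$ is ergodic, almost every ergodic component is still an extension of $X$. The failure of relative independence of $X$ and $\pi_{\mathcal{C}_i}(Y)$ over $\pi_{\mathcal{C}_i}(X)$ amounts to the non-vanishing of a certain integral. Here the hypothesis $\mathcal{C}_i=\bigvee_k Z^{\Gamma_{i,k}}$ is essential: the maximal $\mathcal{C}_i$-factor corresponds concretely to the sub-$\sigma$-algebra $\bigvee_k \mathcal{I}_{\Gamma_{i,k}}$ of $\Gamma_{i,k}$-invariant sets, which is a measure-independent object. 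This lets one decompose the defect integral over the ergodic components and conclude that a positive-measure set of components also witnesses failure of $\mathcal{C}_i$-saturation; any such component can then be taken as the ergodic witness $\widetilde{X}$.

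With this lemma, the modified construction produces ergodic systems throughout: each $X_{n+1}$ is a factor of the ergodic system $\widetilde{X}_n$ and therefore ergodic, and the inverse limit of ergodic systems is ergodic. Austin's verification that the inverse limit is $\mathcal{C}_i$-sated for all $i\in I$ does not use ergodicity and carries over unchanged, giving the desired extension $(X',\mu',G)$. The main obstacle is the key lemma above: transferring a failure of relative independence from a global extension to an individual ergodic component requires a description of the maximal $\mathcal{C}_i$-factor that is independent of the measure, which is exactly what the hypothesis on the $\mathcal{C}_i$ provides. Without this assumption, the ergodic-decomposition argument could fail, since $\pi_{\mathcal{C}_i}$ for a general idempotent class need not interact well with the ergodic decomposition.
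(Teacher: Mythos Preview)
Your proposal is correct and follows essentially the same route as the paper: Austin's tower construction, with the added observation that at each step one may take the witnessing extension to be ergodic by passing to an ergodic component, this being possible precisely because the classes $\mathcal{C}_i=\bigvee_k Z^{\Gamma_{i,k}}$ have maximal factors described by invariant $\sigma$-algebras that can only grow when passing to an ergodic component. The paper phrases the inductive step quantitatively (choosing an extension where $\|\mathbb{E}(f_n\circ\pi_W\mid\mathcal{C}_W)\|_{L^2}$ is within $2^{-n}$ of a supremum $\alpha_n$, and showing this supremum is unchanged when restricted to ergodic extensions), whereas you phrase it qualitatively (finding an ergodic witness to non-satedness); but the ergodic-decomposition argument and the role of the hypothesis on $\mathcal{C}_i$ are identical. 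One small point of care: what the argument actually needs is not that the maximal $\mathcal{C}_i$-factor literally equals $\bigvee_k\mathcal{I}_{\Gamma_{i,k}}$ independently of the measure, but only the monotonicity statement that a set in the $\mathcal{C}_i$-factor for $\mu$ remains in the $\mathcal{C}_i$-factor for a.e.\ ergodic component $\mu_z$---this is how the paper states it, and it is the safer formulation.
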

\begin{proof}

    We only prove this theorem for a single idempotent class $\mathcal{C}$ and the general case can be proved easily (we comment further on this in the end of the proof). 

	Let $(f_i)_{i\in \N}$ be a countable subset of the unit ball in $L^{\infty}(\mu)$ dense for the $L^2$-norm. Suppose additionally that all $f_j$ appear infinitely times in the sequence $(f_i)_{i\in \N}$.
We construct a sequence of ergodic extensions $X_i$ of $X$ inductively, 
	and their inverse limit will be the system we are looking for. 
	
	Suppose we have constructed an ergodic extension 
	$\pi_n\colon X_n\to X$. The extension $\pi_{n+1,n}\colon X_{n+1}\to X_{n}$ is constructed as follows.
	Let \[\alpha_n:=\sup \{ \| \mathbb{E}(f_n\circ \pi_{W}\vert \mathcal{C}({W})) \|_{L^2}   \}   \]
	where the supremum is taken over all extensions $ \pi_{W}\colon W\to X_{n}$ which are joinings of $X_n$ with some element in $\mathcal{C}$. We remark (as pointed out in \cite{A}) that one can always assume that the spaces are given by a model in a Cantor space with a Borel invariant probability measure, and so the supremum can be considered in a set rather than in a proper class.   
	
	We first claim that the supremum remains unchanged if we restrict to ergodic extensions. 
	 Let $\pi_{W}\colon(W,\mu_{W},G)\to(X_{n},\mu_{X_{n}},G)$ be an extension of $X_{n}$
	and $\mu_W=\int \mu_{z} d\nu(z)$ be the ergodic decomposition of $\mu_W$. Since $X_n$ is ergodic, we have that for $\nu$-a.e. $w$, $\pi_{W}(\mu_w)=\mu_{X_{n}}$. So for $\nu$-a.e $w$, the system $(W,\mu_{w},G)$ is an extension of $(X_{n},\mu_{X_n},G)$. 
	On the other hand, the equality
	\[ \| \mathbb{E}(f_n\circ \pi_{W}\vert \mathcal{C}_{W}) \|_{L^2(\mu_W)}^2 =\int \|\mathbb{E}(f_n\circ \pi_{W}\vert \mathcal{C}_{W}) \|_{L^2(\mu_z)}^2 d\nu(z)      \]
	implies that there exists a set of $z$ with positive $\nu$ measure such that \[ \|\mathbb{E}(f_n\circ \pi_{W}\vert \mathcal{C}_{W}) \|_{L^2(\mu_z)}^2\geq \| \mathbb{E}(f_n\circ \pi_{W}\vert \mathcal{C}_{W}) \|_{L^2(\mu_W)}^2. \]
	
	Since $\mathcal{C}_i=Z^{\Gamma}$ for some subgroup $\Gamma\subseteq G$, we have that $\nu$-a.e. this invariant sets are also invariant under the measure $\mu_z$, meaning that invariant $\sigma$-algebras with respect to the measures $\mu_z$ are finer than the one with respect to $\mu$. This implies that there is a set of $z$ with positive $\nu$ measure such that \[ \| \mathbb{E}(f_n\circ \pi_{W}\vert \mathcal{C}_{W,z}) \|_{L^2(\mu_z)} \geq \| \mathbb{E}(f_n\circ \pi_{W}\vert \mathcal{C}_{W}) \|_{L^2(\mu_W)} \]
	(we write $\mathcal{C}_{W,z}$ to emphasize its dependence on the measure $\mu_z$). This finishes the claim.
	
	Now we can take an ergodic extension W of $X_{n}$ such that 
	\[\alpha_n-2^{n}\leq  \| \mathbb{E}(f_n\circ \pi_{W}\vert \mathcal{C}_{W}) \|_{L^2}      \]
	an we put $X_{n+1}=W$.

	Let $X_{\infty}$ be the inverse limit of the systems $(X_n)_{n\in \N}$. It is an ergodic extension of $X$ and it is a joining of $X$ and a system $Y$ in $\mathcal{C}$. The rest follows equal as in \cite{A}.

\end{proof}
As we said, the case of a countable number of idempotent classes follows by applying the case of a single idempotent class several times (\cite{Aus}). To achieve this, let $(\mathcal{C}_i)_{i\in \N}$ a countable collection of idempotent classes (we index them with $\N$ for convenience) and let $(a_k)_{k\in \N}$ be a point in $\N^{\N}$, where each value  $i$ appears infinitely often (i.e. $\{k: a_k=i\}$ is infinite). Starting from the system $X=X_0$, for each $k\in \N$, can apply Theorem \ref{Thm:ext} to find an extension $X_{i+1}$ of $X_{i}$, sated with respect to the class $\mathcal{C}_{a_k}$. The inverse limit of the systems $X_{i}$, $i\in \N$ is sated with respect to all the classes simultaneously. 

\subsection{Magic extensions}

\begin{defn}
	Let $(X,\mu,G)$ be a system of commuting transformations and $T_{1},\dots,T_{d}\in G$. Denote $Z_d:=\bigvee_{i=1}^d I_{T_i}$. We say $X$ is {\it magic} for $T_1,\ldots,T_d$ (or $(X,\mu,T_1,\ldots,T_d)$ is magic) if 
	\[\normm{f}_{T_1,\ldots,T_d}=0 \text{ if and only if } \mathbb{E}\Bigl(f\vert \bigvee_{i=1}^d I_{T_i}\Bigr)=\mathbb{E}\Bigl(f\vert Z_d \Bigr)=0 \]
	for all $f\in L^{\infty}(\mu)$.
\end{defn}
The existence of magic extension was proved in \cite{H} (recall the definitions in Section \ref{Sec:HostMeasures}):
\begin{thm}[Host, \cite{H}] \label{Thm:MagicExtension}
	Let $(X,\mu,T_1,\ldots,T_d)$ be a system of commuting transformations. Then $(X^{[d]},\mu_{T_1,\ldots,T_d},\mathcal{F}_{1}^1,\ldots,\mathcal{F}_{d}^1)$ is a magic extension of $(X,\mu,T_1,\ldots,T_d)$ (the factor map is the projection into the last coordinate of $X^{[d]}$). 
\end{thm}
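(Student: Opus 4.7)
The plan splits into verifying the extension structure, handling the easy direction of ``magic,'' and then the substantive converse. The extension part is direct: the projection $\pi\colon X^{[d]}\to X$ onto the coordinate indexed by $\vec{1}=(1,\dots,1)$ intertwines each $\mathcal{F}_i^1$ with $T_i$, since $\mathcal{F}_i^1$ acts as $T_i$ on any coordinate whose $i$-th entry equals $1$. A straightforward induction on $d$ from the recursive definition of $\mu_{T_1,\dots,T_d}$ shows that its marginal on any single coordinate of $X^{[d]}$ equals $\mu$, so $\pi$ is a factor map of measure-preserving systems. For the easy direction of magic, Theorem~\ref{ine}(4) applied to $(X^{[d]},\mu_{T_1,\dots,T_d},\mathcal{F}_1^1,\dots,\mathcal{F}_d^1)$ immediately gives that $\normm{F}=0$ implies $\mathbb{E}(F\vert \bigvee_i I_{\mathcal{F}_i^1})=0$.

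The content of the theorem is the converse implication. I would seek an explicit formula of the form
\[\normm{F}_{\mu_{T_1,\dots,T_d},\mathcal{F}_1^1,\dots,\mathcal{F}_d^1}^{2^d} = \int_{X^{[d]}} F\cdot \Phi \, d\mu_{T_1,\dots,T_d},\]
where $\Phi$ is a bounded function measurable with respect to $\bigvee_i I_{\mathcal{F}_i^1}$. Once this identity is established, the hypothesis $\mathbb{E}(F\vert \bigvee_i I_{\mathcal{F}_i^1})=0$ annihilates the right-hand side, forcing $\normm{F}=0$. To derive such an identity, the plan is to unpack the iterated Host measure $(\mu_{T_1,\dots,T_d})_{\mathcal{F}_1^1,\dots,\mathcal{F}_d^1}$ on $(X^{[d]})^{[d]}\simeq X^{V_d\times V_d}$ using the recursive definition of the Host measure, while exploiting the identity $\mathcal{F}_i^0\mathcal{F}_i^1 = T_i^{[d]}$, which permits swapping $\mathcal{F}_i^1$-invariance with $\mathcal{F}_i^0$-invariance modulo the diagonal action of $T_i$.

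The main obstacle is the combinatorial-measure-theoretic bookkeeping in unpacking this iterated Host measure: it is built from $2d$ successive relatively independent self-products over invariant $\sigma$-algebras of face maps, and one must track which of the $4^d$ coordinates of $X^{V_d\times V_d}$ are coupled by the successive disintegrations in order to collapse the $2^d$-fold tensor integral down to a single integral of $F$ against a function that is automatically invariant under some $\mathcal{F}_i^1$. This reorganization is the technical heart of Host's argument in~\cite{H}, and carrying it out in full is what yields the magic property.
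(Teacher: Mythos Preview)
The paper does not prove this theorem at all: it is quoted from Host~\cite{H} and stated without proof. Your outline is therefore not being compared against anything in this paper, but it does track Host's original argument faithfully. The extension and easy direction are handled exactly as you say, and the substantive direction in~\cite{H} is indeed established by showing that the iterated Host measure $(\mu_{T_1,\dots,T_d})_{\mathcal{F}_1^1,\dots,\mathcal{F}_d^1}$ on $X^{V_d\times V_d}$ can be reorganized so that $\normm{F}^{2^d}$ becomes $\int F\cdot\Phi\,d\mu_{T_1,\dots,T_d}$ with $\Phi$ measurable for $\bigvee_i I_{\mathcal{F}_i^1}$; the combinatorial bookkeeping you flag is precisely the content of Host's Proposition~2 and its surrounding lemmas.

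One small clarification: your invocation of Theorem~\ref{ine}(4) for the easy direction is correct in spirit, but note that the statement there is phrased for the transformations $T_1,\dots,T_d$ on $(X,\mu)$, whereas you need it for $\mathcal{F}_1^1,\dots,\mathcal{F}_d^1$ on $(X^{[d]},\mu_{T_1,\dots,T_d})$. This is fine because Theorem~\ref{ine}(4) is stated in general for any commuting system, but it is worth being explicit that you are applying it to the cube system rather than the base.
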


\begin{rem}
	If $(X,\mu,T_1,\ldots,T_d)$ is a system of commuting transformations, we can also look at a subset of the transformations $T_1\ldots,T_d$, say $T_{a_1},\ldots,T_{a_k}$, $\{a_1\ldots,a_k\}\subseteq \{1,\ldots,d\}$. 
	We can regard the ($\Z^k)$-system  $(X,\mu,T_{a_1},\ldots,T_{a_k})$ and find a corresponding magic extension $(X^{[k]},\mu_{T_{a_1},\ldots,T_{a_k}}$, $\mathcal{F}_{a_1}^1,\ldots,\mathcal{F}_{a_k}^1)$. To this system, we can add the diagonal transformations $T_{b_i}^{[k]}$, $b_i\notin \{a_1,\ldots,a_k\}$ in order to complete to $d$ transformations. Doing this, $(X^{[k]},\mu_{T_{a_1}, \ldots,T_{a_k}}$, $\mathcal{F}_{a_1}^1,$ $\ldots,$ $\mathcal{F}_{a_k}^1$, $T_{b_1}^{[k]},\ldots, T_{b_{d-k}})$ is also an extension (as $\Z^d$)-systems of $(X,\mu,T_{a_1},$ $\ldots,T_{a_k},T_{b_1},\ldots,T_{b_{d-k}})$. Of course we can rearrange the order of transformations to get an extension of $(X,\mu,T_1,\ldots,T_d)$.

\end{rem}

Using this result, we can see that being magic is a sated condition as the following shows.
\begin{lem} \label{lem:SatedAndMagic}
	Let $(X,\mu,T_1,\ldots,T_d)$ be a measure preserving system and let $I\subseteq [d]$. If $X$ is $\bigvee_{i\in I} Z^{\{i\}}$-sated, then it is magic for $T_i$, $i\in I$.
\end{lem}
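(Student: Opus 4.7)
The plan is to transfer the magic property from a magic extension back to $X$ using the sated hypothesis. Write $\mathcal{C} := \bigvee_{i\in I} Z^{\{i\}}$. Note that only one direction of the magic equivalence is at stake: the implication $\normm{f}_{T_I}=0 \Rightarrow \mathbb{E}(f|\bigvee_{i\in I}\mathcal{I}_{T_i})=0$ holds in every system by Theorem \ref{ine}(4). So it suffices to take $f\in L^\infty(\mu)$ with $\mathbb{E}(f\mid \bigvee_{i\in I}\mathcal{I}_{T_i}(X))=0$ and conclude $\normm{f}_{X,T_I}=0$.

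First I would identify the maximal $\mathcal{C}$-factor of an arbitrary $G$-system. Since $\mathcal{I}_{T_i}\in Z^{\{i\}}\subseteq \mathcal{C}$ for each $i\in I$, the factor $\bigvee_{i\in I}\mathcal{I}_{T_i}$ lies in $\mathcal{C}$. The reverse containment, that this join absorbs every $\mathcal{C}$-factor, follows from the description of $\mathcal{C}$ as the idempotent closure of the $Z^{\{i\}}$ under joinings and inverse limits (functions measurable with respect to any joining of factors in the $Z^{\{i\}}$'s lie in the $L^2$-closure of sums of functions measurable with respect to $\mathcal{I}_{T_i}$ for some $i\in I$). Hence the maximal $\mathcal{C}$-factor of any system is precisely $\bigvee_{i\in I}\mathcal{I}_{T_i}$.

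Next I would invoke Theorem \ref{Thm:MagicExtension}, together with the remark that follows it, to produce a $G$-factor map $\pi\colon \tilde X\to X$ with $\tilde X=X^{[k]}$ (where $k=|I|$), carrying the Host measure $\mu_{T_i,\,i\in I}$, the face transformations $\mathcal{F}_i^1$ for $i\in I$, and the diagonal transformations $T_j^{[k]}$ for $j\notin I$. By Theorem \ref{Thm:MagicExtension}, $\tilde X$ is magic for the family $\{T_i\}_{i\in I}$. The sated hypothesis applied to the extension $\pi$ gives that $X$ and $\bigvee_{i\in I}\mathcal{I}_{T_i}(\tilde X)$ are relatively independent over $\bigvee_{i\in I}\mathcal{I}_{T_i}(X)$. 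Testing the pullback $f\circ\pi$ against any bounded $h$ measurable with respect to $\bigvee_{i\in I}\mathcal{I}_{T_i}(\tilde X)$,
\[
\int (f\circ \pi)\, h\, d\mu_{\tilde X}=\int \mathbb{E}\!\left(f\,\Big|\,\textstyle\bigvee_{i\in I}\mathcal{I}_{T_i}(X)\right)\cdot \mathbb{E}\!\left(h\,\Big|\,\textstyle\bigvee_{i\in I}\mathcal{I}_{T_i}(X)\right) d\mu_{\tilde X}=0,
\]
so $\mathbb{E}(f\circ\pi\mid \bigvee_{i\in I}\mathcal{I}_{T_i}(\tilde X))=0$. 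Magic of $\tilde X$ then yields $\normm{f\circ\pi}_{\tilde X,T_I}=0$, and invariance of the Host seminorm under factor maps (Theorem \ref{ine}(5)) closes the argument: $\normm{f}_{X,T_I}=\normm{f\circ\pi}_{\tilde X,T_I}=0$.

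The main obstacle I foresee is the identification of the maximal $\mathcal{C}$-factor with $\bigvee_{i\in I}\mathcal{I}_{T_i}$. This is the hinge that makes the sated relative independence give exactly the conditional expectation we need; without it, satedness would provide relative independence over a potentially larger $\sigma$-algebra and the vanishing of $\mathbb{E}(f\mid \bigvee_{i\in I}\mathcal{I}_{T_i}(X))$ would not propagate. Once this description of $\mathcal{C}_X$ is in hand, the rest of the argument is a short chain of standard manipulations of Host seminorms and conditional expectations.
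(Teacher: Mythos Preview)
Your proof is correct and follows essentially the same route as the paper's: take the magic extension from Theorem~\ref{Thm:MagicExtension} (extended to a $\Z^d$-action via the subsequent remark), use $\mathcal{C}$-satedness to force $\mathbb{E}(f\circ\pi\mid \bigvee_{i\in I}\mathcal{I}_{T_i}(\tilde X))=0$, apply the magic property upstairs, and descend via Theorem~\ref{ine}(5). The only difference is that you make explicit the identification of the maximal $\mathcal{C}$-factor with $\bigvee_{i\in I}\mathcal{I}_{T_i}$, which the paper uses without comment; your justification of this point is correct and fills a small gap in the paper's exposition.
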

\begin{proof}
	We assume that $I=[k]=\{1,\ldots,k\}$ as the general case is the same modulo small changes of notations.
	It suffices to show that $\mathbb{E}(f\vert \bigvee_{i=1}^k \mathcal{I}_{T_i})=0$ implies that $\normm{f}_{\mu,[k]}=0$.
	By Theorem \ref{Thm:MagicExtension}, we can consider a magic extension $(X',\mu',T_1',\ldots,T_k',T_{k+1}',\ldots,T_d')$ (we write it like this to ease notation) of $(X,\mu,T_1,\ldots,T_d)$ which is magic for $T_1',\ldots,T_k'$. Denote by $\pi$ the factor map. By the satedness assumption on $X$, we have that $\mathbb{E}(f\circ \pi \vert \bigvee_{i=1}^k \mathcal{I}_{T_i'})=\mathbb{E}(f \vert \bigvee_{i=1}^k \mathcal{I}_{T_i})\circ \pi =0$. Since $(X',\mu',T_1',\ldots,T_d')$ is magic for $T_1',\ldots,T_k'$, we have that $\normm{f\circ \pi}_{\mu',[k]}=0$ which implies that $\normm{f}_{\mu,[k]}=0$.  
\end{proof}

\begin{cor}\label{cubeind}
	Let $(X,\mu,G)$ be a measure preserving system with commuting transformations and $T_{1},\dots,T_{d}\in G$. The system $(X^{[d]},\mu_{T_{1},\dots,T_{d}},\mathcal{G}_{T_{1},\dots,T_{d}})$ can be viewed as a joining of $2^{d}$ copies of $X$. If $X$ is  $\bigvee_{i\in [d]} Z^{\{i\}}$-sated, then each copy $X$ of $\mu_{T_{1},\dots,T_{d}}$ is relatively independent over $\bigvee_{i=1}^{d}\I_{T_{i}}$.
\end{cor}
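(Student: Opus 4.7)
The plan is to combine the magic property (which is granted by satedness via Lemma~\ref{lem:SatedAndMagic}) with the Cauchy--Schwartz inequality for Host seminorms (part (1) of Theorem~\ref{ine}) and a standard decomposition argument.

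First, unpacking what the conclusion says: the statement that the $2^d$ copies of $X$ making up $(X^{[d]},\mu_{T_1,\dots,T_d})$ are relatively independent over $Z_d=\bigvee_{i=1}^d \mathcal{I}_{T_i}$ means that for every bounded family $(f_\epsilon)_{\epsilon\in V_d}$,
\[
\int_{X^{[d]}} \bigotimes_{\epsilon\in V_d} f_\epsilon \, d\mu_{T_1,\dots,T_d} \;=\; \int_{X^{[d]}} \bigotimes_{\epsilon\in V_d} \mathbb{E}(f_\epsilon\mid Z_d)\, d\mu_{T_1,\dots,T_d}.
\]
So I would prove this identity directly.

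The key input is Lemma~\ref{lem:SatedAndMagic} applied with $I=[d]$: since $X$ is $\bigvee_{i\in [d]} Z^{\{i\}}$-sated, $X$ is magic for $T_1,\dots,T_d$. Hence for any $f\in L^\infty(\mu)$, $\mathbb{E}(f\mid Z_d)=0$ implies $\normm{f}_{\mu,T_1,\dots,T_d}=0$. Now decompose each function as $f_\epsilon = g_\epsilon + h_\epsilon$, where $g_\epsilon = \mathbb{E}(f_\epsilon\mid Z_d)$ and $h_\epsilon = f_\epsilon - g_\epsilon$. Expanding the tensor product gives
\[
\bigotimes_{\epsilon\in V_d} f_\epsilon \;=\; \sum_{A\subseteq V_d} \bigotimes_{\epsilon\in V_d} u^A_\epsilon,
\]
where $u^A_\epsilon = h_\epsilon$ if $\epsilon\in A$ and $u^A_\epsilon = g_\epsilon$ otherwise. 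By the Cauchy--Schwartz inequality of Theorem~\ref{ine}(1),
\[
\Bigl|\int_{X^{[d]}} \bigotimes_{\epsilon\in V_d} u^A_\epsilon \, d\mu_{T_1,\dots,T_d}\Bigr| \;\le\; \prod_{\epsilon\in V_d} \normm{u^A_\epsilon}_{\mu,T_1,\dots,T_d}.
\]
For every nonempty $A$, this product contains at least one factor $\normm{h_\epsilon}_{\mu,T_1,\dots,T_d}$, which vanishes because $\mathbb{E}(h_\epsilon\mid Z_d)=0$ and $X$ is magic.

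Therefore only the term with $A=\emptyset$ survives, yielding
\[
\int_{X^{[d]}} \bigotimes_{\epsilon\in V_d} f_\epsilon\, d\mu_{T_1,\dots,T_d} \;=\; \int_{X^{[d]}} \bigotimes_{\epsilon\in V_d} g_\epsilon\, d\mu_{T_1,\dots,T_d},
\]
which is exactly the relative independence claim. There is no real obstacle here; the only subtle point is making sure Lemma~\ref{lem:SatedAndMagic} applies with the full index set $I=[d]$, and then organizing the Cauchy--Schwartz decomposition so that each non-trivial term picks up a zero Host seminorm.
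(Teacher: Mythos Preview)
Your proof is correct and uses exactly the same two ingredients as the paper: Lemma~\ref{lem:SatedAndMagic} (to pass from satedness to the magic property) and the Cauchy--Schwartz inequality of Theorem~\ref{ine}(1). The only organizational difference is that the paper handles one coordinate at a time (showing that the integral vanishes whenever $\mathbb{E}(f_{00\dots 0}\mid Z_d)=0$, then invoking symmetry for the other copies), whereas you decompose all $2^d$ functions simultaneously and expand; these are equivalent formulations of the same argument.
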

\begin{proof} We only prove it for the first copy as the other cases are similar. It suffices to show that 
	$$\int_{X^{[d]}} \bigotimes_{\epsilon \in V_d} f_{\epsilon} ~ d\mu_{T_1,\ldots,T_d}=0$$
	if $\E(f_{00\dots 0}\vert\bigvee_{i=1}^{d}\I_{T_{i}})=0$. By Lemma \ref{lem:SatedAndMagic}, $X$ is magic for $T_{1},\dots,T_{d}$ and so  $\normm{f_{00\dots 0}}_{\mu,[d]}=0$. By Theorem \ref{ine} (1), the proof is finished.
\end{proof}

We need the following result which will be used later.

\begin{lem} \label{lem:MeasurableI_k}
	Let $(X,\mu,T_1,\ldots,T_k)$ be a measure preserving system with commuting transformations, magic for $T_1,\ldots,T_k$.  Then the $\sigma$-algebra $\I_{T_{k}^{[k-1]}}$ in $(X^{[k-1]},\mu_{T_1,\ldots,T_{k-1}})$ is measurable with respect to $Z_k^{[k-1]}$. 
\end{lem}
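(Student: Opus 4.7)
The plan is to prove $\mathcal{I}_{T_k^{[k-1]}}\subseteq Z_k^{[k-1]}$ modulo $\mu_{T_1,\ldots,T_{k-1}}$-null sets by establishing the stronger statement that the conditional expectation $\mathbb{E}(F\vert\mathcal{I}_{T_k^{[k-1]}})$ is $Z_k^{[k-1]}$-measurable for every $F\in L^2(\mu_{T_1,\ldots,T_{k-1}})$. Once this is known, an $\mathcal{I}_{T_k^{[k-1]}}$-measurable $F$ coincides with its own projection and is therefore $Z_k^{[k-1]}$-measurable.

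The technical core is the following vanishing claim: if $G=\bigotimes_{\eta\in V_{k-1}}g_\eta$ is a tensor in which at least one factor $g_{\eta_0}$ satisfies $\mathbb{E}(g_{\eta_0}\vert Z_k)=0$, then $\mathbb{E}(G\vert\mathcal{I}_{T_k^{[k-1]}})=0$. Indeed, the definition of $\mu_{T_1,\ldots,T_k}$ as a relatively independent self-joining over $\mathcal{I}_{T_k^{[k-1]}}$ yields
\[
\Vert\mathbb{E}(G\vert\mathcal{I}_{T_k^{[k-1]}})\Vert_{L^2(\mu_{T_1,\ldots,T_{k-1}})}^{2}=\int_{X^{[k]}}G\otimes G\,d\mu_{T_1,\ldots,T_k},
\]
and $G\otimes G$ is a pure tensor $\bigotimes_{\epsilon\in V_k}f_\epsilon$ with $f_{\eta_0 0}=f_{\eta_0 1}=g_{\eta_0}$. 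The Cauchy-Schwartz inequality of Theorem \ref{ine}(1) bounds the right-hand side by a product of Host seminorms containing $\normm{g_{\eta_0}}_{\mu,T_1,\ldots,T_k}^{2}$, and the magic hypothesis on $(X,\mu,T_1,\ldots,T_k)$ converts $\mathbb{E}(g_{\eta_0}\vert Z_k)=0$ into $\normm{g_{\eta_0}}_{\mu,T_1,\ldots,T_k}=0$, so the bound vanishes.

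Next, writing each factor as $g_\eta=\mathbb{E}(g_\eta\vert Z_k)+(g_\eta-\mathbb{E}(g_\eta\vert Z_k))$ and expanding the tensor into $2^{2^{k-1}}$ summands, the vanishing claim kills every term containing at least one non-projected factor, so for every tensor $G$,
\[
\mathbb{E}(G\vert\mathcal{I}_{T_k^{[k-1]}})=\mathbb{E}\Bigl(\bigotimes_{\eta\in V_{k-1}}\mathbb{E}(g_\eta\vert Z_k)\Bigm\vert\mathcal{I}_{T_k^{[k-1]}}\Bigr).
\]
The function $H:=\bigotimes_\eta\mathbb{E}(g_\eta\vert Z_k)$ is $Z_k^{[k-1]}$-measurable. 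Since the $T_i$ commute, $T_k$ leaves each $\mathcal{I}_{T_i}$ invariant, hence $T_k^{[k-1]}$ preserves the product $\sigma$-algebra $Z_k^{[k-1]}$. By the mean ergodic theorem applied to $T_k^{[k-1]}$, the averages $\frac{1}{N}\sum_{n=0}^{N-1}H\circ (T_k^{[k-1]})^{n}$ converge in $L^2$ to $\mathbb{E}(H\vert\mathcal{I}_{T_k^{[k-1]}})$; each partial average is $Z_k^{[k-1]}$-measurable, so the limit is as well.

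By $L^2$-density of finite linear combinations of pure tensors in $L^2(\mu_{T_1,\ldots,T_{k-1}})$ and $L^2$-continuity of conditional expectation, the conclusion extends to every $F\in L^2$, completing the proof. The main obstacle is the vanishing claim, where the magic hypothesis enters in an essential way through the passage from $\mathbb{E}(g_{\eta_0}\vert Z_k)=0$ to the vanishing of the Host seminorm; the rest is a routine orthogonal decomposition together with the mean ergodic theorem.
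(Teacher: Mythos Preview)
Your proof is correct and follows essentially the same route as the paper's: reduce to pure tensors, establish the vanishing claim $\mathbb{E}(G\vert\mathcal{I}_{T_k^{[k-1]}})=0$ when some $\mathbb{E}(g_{\eta_0}\vert Z_k)=0$ by computing $\Vert\mathbb{E}(G\vert\mathcal{I}_{T_k^{[k-1]}})\Vert_{L^2}^2$ as an integral against $\mu_{T_1,\ldots,T_k}$ and bounding it via the Cauchy--Schwartz inequality of Theorem~\ref{ine}(1) together with the magic hypothesis. The only difference is that you spell out the closing step---that $\mathbb{E}(H\vert\mathcal{I}_{T_k^{[k-1]}})$ remains $Z_k^{[k-1]}$-measurable when $H$ is, via the mean ergodic theorem and the $T_k$-invariance of each $\mathcal{I}_{T_i}$---whereas the paper leaves this implicit.
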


\begin{proof}
 Recall that $Z_k=\bigvee_{i=1}^k \I_{T_i}$. We follow Proposition 4.7 in \cite{HK05}. The proof is similar to the one in \cite{DS}. It suffices to show that 
	\[\mathbb{E} \Bigl ( \bigotimes_{\epsilon \in \{0,1\}^{k-1}} f_{\epsilon} \vert \I_{T_{k}^{[k-1]}} \Bigr)= \mathbb{E}\Bigl(\bigotimes_{ \epsilon \in \{0,1\}^{k-1}} \mathbb{E}(f_{\epsilon}\vert Z_k) \vert\I_{T_{k}^{[k-1]}} \Bigr). \]
	
	It then suffices to prove this equality for the case when $\mathbb{E}(f_{\epsilon}\vert Z_k)=0$ for some $\epsilon \in \{0,1\}^{k-1}$. By the definition of $\mu_{T_1,\ldots,T_k}$, we have that 
	
	\begin{align*}
	\int_{X^{[k-1]}}\Bigl \vert \mathbb{E} \Bigl ( \bigotimes_{\epsilon \in \{0,1\}^{k-1}} f_{\epsilon} \vert \I_{T_{k}^{[k-1]}} \Bigr) \Bigr \vert^2d\mu_{T_1,\ldots,T_{k-1}}=& \int_{X^{k}} \bigotimes_{0\epsilon \in \{0,1\}^{k-1} }f_{\epsilon} \otimes \bigotimes_{1\epsilon \in \{0,1\}^{k-1}} f_{\epsilon}~ d\mu_{T_1,\ldots,T_k} \\
	\leq & \prod\limits_{\epsilon\in \{0,1\}^{k-1}} \normm{f_{\epsilon}}_{T_1,\ldots,T_{k}}^2, 
	\end{align*}
	and we are done since in $(X,\mu,T_1,\ldots,T_k)$,  $\normm{f_{\epsilon}}_{\mu,[k]}=0$ is equivalent to $\mathbb{E}(f_{\epsilon}\vert Z_k)=0$.
\end{proof}

\section{The pointwise convergence of cubic averages}  \label{Sec:CubicPointwise}
We prove Theorem  \ref{THM:cubicpointwise} in this section. We put all the satedness conditions we need in one definition.
\begin{defn}
	Let $(X,\mu,T_1,\ldots,T_d)$ be a measure preserving system of commuting transformations $T_1,\ldots,T_d$. We say that $(X,\mu,T_1,\ldots,T_d)$ is {\it $Z$-sated} if it is $\bigvee_{i\notin J} Z^{J\cup\{i\}}$-sated and $\bigvee_{i\in J} Z^{\{i\}}$-sated for all $J\subseteq [d]$.
\end{defn}

\begin{conv}
	In this section, for $J\subseteq [d]$, we let $\mathcal{I}_{J}$ denote the $\sigma$-algebra of invariant sets under all the transformations $T_i$, $i\in J$. For example, $\mathcal{I}_G=\mathcal{I}_{\{1,\ldots,d\}},\mathcal{I}_{T_{i}}=\mathcal{I}_{\{i\}}$. We also let $X_{J}$ to denote the factor of $X$ endowed with the sub-$\sigma$-algebra $\I_{J}$.
\end{conv}
\subsection{Topological model of the system} \label{Sec:BuildTopModel}
This section is devoted to building a suitable topological model for a sated enough ergodic measure preserving system. 
 We start with a useful lemma which states satedness conditions of some factors.

\begin{lem} \label{lem:SatednessFactors}
Let $(X,\mu,T_1,\ldots,T_d)$ be a measure preserving system with commuting transformations and let $J\subseteq [d]$. If $(X,\mu,T_1,\ldots,T_d)$ is $\bigvee_{j\notin J} Z^{J\cup \{j\}}$-sated, then $(X_{J},\mu,T_1,\ldots,T_d)$ is $\bigvee_{j \notin J} Z^{\{j\}}$-sated. 
\end{lem}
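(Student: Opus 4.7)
My plan is to reduce the desired $\mathcal{C}'$-satedness of $X_J$ (with $\mathcal{C}' := \bigvee_{j \notin J} Z^{\{j\}}$) to the given $\mathcal{C}$-satedness of $X$ (with $\mathcal{C} := \bigvee_{j \notin J} Z^{J \cup \{j\}}$) via a fiber-product construction. The crucial preliminary observation is that the two maximal factors coincide as sub-$\sigma$-algebras of $X$: since $X_J$ carries the $\sigma$-algebra $\mathcal{I}_J(X)$, we have $\mathcal{I}_{\{j\}}(X_J) = \mathcal{I}_{\{j\}}(X) \cap \mathcal{I}_J(X) = \mathcal{I}_{J \cup \{j\}}(X)$ for each $j \notin J$, so joining over $j \notin J$ gives $\pi_{\mathcal{C}'}(X_J) = \bigvee_{j \notin J} \mathcal{I}_{J \cup \{j\}}(X) = \pi_{\mathcal{C}}(X)$.

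Given an arbitrary extension $\tilde{X_J} \to X_J$, I would form the relatively independent joining $\tilde{X} := X \times_{X_J} \tilde{X_J}$, which is a $\mathbb{Z}^d$-extension of both $X$ and $\tilde{X_J}$. To prove $\mathcal{C}'$-satedness of $X_J$, it suffices to show that for every $f \in L^\infty(X_J)$ with $\mathbb{E}(f \mid \pi_{\mathcal{C}'}(X_J)) = 0$ and every $g \in L^\infty(\pi_{\mathcal{C}'}(\tilde{X_J}))$, the integral $\int fg\, d\tilde\mu_J$ vanishes. Working in $\tilde{X}$ and exploiting that $f$ is $\mathcal{I}_J(\tilde{X})$-measurable (because $X_J \subseteq \mathcal{I}_J(X)$), one rewrites $\int fg\, d\tilde\mu = \int f \cdot h\, d\tilde\mu$ where $h := \mathbb{E}(g \mid \mathcal{I}_J(\tilde{X}))$.

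The key technical step is then to verify that $h \in \pi_{\mathcal{C}}(\tilde{X}) = \bigvee_{j \notin J} \mathcal{I}_{J \cup \{j\}}(\tilde{X})$; granted this, the assumed $\mathcal{C}$-satedness of $X$ applied to $f$ (which satisfies $\mathbb{E}(f \mid \pi_{\mathcal{C}}(X)) = 0$ by the identification above) yields $\mathbb{E}(f \mid \pi_{\mathcal{C}}(\tilde{X})) = 0$, so $\int f h\, d\tilde\mu = 0$, as required. For a single $g \in \mathcal{I}_{\{j\}}(\tilde{X_J})$ this verification is immediate: the conditional expectation $\mathbb{E}(\cdot \mid \mathcal{I}_J(\tilde{X}))$ commutes with $T_j$ by commutativity of the transformations, so $h$ is both $\mathcal{I}_J$-measurable and $T_j$-invariant, hence lies in $\mathcal{I}_{J \cup \{j\}}(\tilde{X}) \subseteq \pi_{\mathcal{C}}(\tilde{X})$.

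The main obstacle will be handling general $g \in \pi_{\mathcal{C}'}(\tilde{X_J}) = \bigvee_{j \notin J} \mathcal{I}_{\{j\}}(\tilde{X_J})$: while $g$ can be approximated in $L^2$ by finite linear combinations of products of single-$\mathcal{I}_{\{j\}}$-measurable functions, the $\mathcal{I}_J$-conditional expectation of such a product need not a priori land in the join $\bigvee_j \mathcal{I}_{J \cup \{j\}}$. To circumvent this, I would reduce to the case where $\tilde{X_J}$ lies in $Z^J$ (i.e.\ $T_i = \mathrm{id}$ on $\tilde{X_J}$ for $i \in J$), in which case pullbacks of $\mathcal{I}_{\{j\}}(\tilde{X_J})$-functions to $\tilde{X}$ sit inside $\mathcal{I}_{J \cup \{j\}}(\tilde{X})$ automatically (the trivial $T_i$-action along $\tilde{X_J}$-coordinates lifts $T_i$-invariance for free), so $\pi_{\mathcal{C}'}(\tilde{X_J})$ pulls back into $\pi_{\mathcal{C}}(\tilde{X})$ and the argument runs cleanly. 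The general case is reduced to this one by replacing $\tilde{X_J}$ with its $\mathcal{I}_J$-factor $\tilde{X_J}^{(J)} \in Z^J$ and using the $\mathcal{I}_J$-invariance of $f$ to pass $g$ through the projection $g \mapsto \mathbb{E}(g \mid \mathcal{I}_J(\tilde{X_J}))$, which preserves membership in $\pi_{\mathcal{C}'}(\tilde{X_J})$ by the mean-ergodic-theorem description of the conditional expectation.
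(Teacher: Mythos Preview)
Your argument is correct and rests on the same underlying observation as the paper's: since $T_i$ acts trivially on $X_J$ for $i\in J$, one may without loss of generality arrange that the ``other side'' of the coupling also carries a trivial $T_i$-action for $i\in J$, at which point $\mathcal{C}'$-data becomes $\mathcal{C}$-data and the assumed satedness of $X$ applies. The difference lies only in how this reduction is carried out. The paper works with the joining formulation of satedness and simply \emph{redefines} the $\mathbb{Z}^d$-action on the joining $\lambda$ of $X_J$ with $Y\in\mathcal{C}'$, declaring $T_i=\id$ for $i\in J$; since $T_i$ was already the identity on the $X_J$-coordinate, $\lambda$ remains invariant, and $Y$ with this modified action lies in $\mathcal{C}$. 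One then lifts the $X_J$-coordinate to $X$ via the relatively independent joining and invokes $\mathcal{C}$-satedness directly. Your route---form the fibre product $\tilde X=X\times_{X_J}\tilde{X_J}$, project $g$ onto $\mathcal{I}_J(\tilde{X_J})$, verify via the mean ergodic theorem that this projection preserves $\pi_{\mathcal{C}'}$-measurability, and then observe that functions on the resulting $Z^J$-factor pull back into $\pi_{\mathcal{C}}(\tilde X)$---reaches the same endpoint but with more bookkeeping. The paper's ``just change the action'' trick bypasses the need to track conditional expectations and the mean-ergodic-theorem verification entirely; your approach is more hands-on but has the minor advantage of staying strictly within the extension formulation of satedness as stated in the definitions.
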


\begin{proof}
Remark first that for $i\in J$, $T_i$ is the identity transformation on $X_J$. Let $Y$ be a system in the idempotent class $\bigvee_{j \notin J} Z^{j}$ and let $\lambda$ be a joining of $X_J$ and $Y$. We define a new $\Z^d$ action on this joining, declaring that for $i\in J$, $T_i$ is the identity transformation on $\lambda$. Clearly $\lambda$ is invariant under this new action and defines a joining of $X_J$ with a member of the idempotent class $\bigvee_{j \notin J} Z^{J\cup \{j\}}$ that we call $\widetilde{Y}$ (it is the same space as $Y$ but we forget the transformations $T_i$, $i\in J$ and put identity instead). This joining can be regarded as a joining of $X$ and $\widetilde{Y}$ (by lifting the $X_J$ component to $X$) and the satedness condition on $X$ implies that this joining can be projected to $\bigvee_{j \notin J} Z^{J\cup \{j\}}(X)=\bigvee_{j\notin J} Z^{j}(X_J)$ in the $X$ component. This finishes proof.  
\end{proof}

\begin{defn}[Diagram of invariant factors]
Let $(X,\mathcal{X},T_1,\ldots,T_d)$ be an ergodic measure preserving system with commuting transformations. Its diagram of invariant factor is the commutative diagram which contains all $\sigma$-invariant algebras $\mathcal{I}_{J}$, $J\subseteq [d]$.  
\end{defn}
For example, the diagram of invariant factors of $(X,\mathcal{X},\mu,T_1,T_2,T_3)$ is 

{\tiny
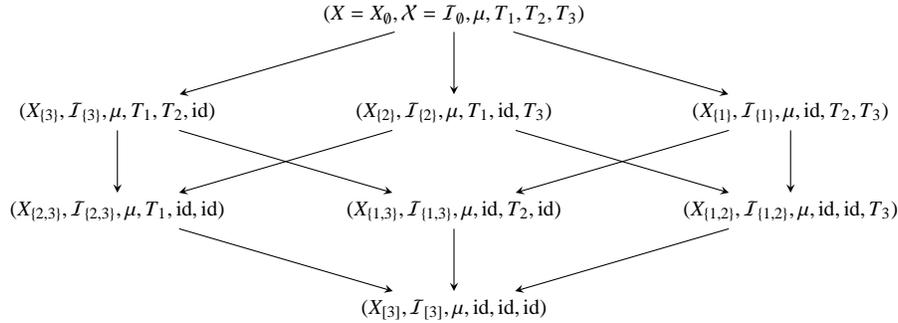
\begin{figure}[h]
 \begin{tikzpicture}
  \matrix (m) [matrix of math nodes,row sep=3em,column sep=4em,minimum width=2em,ampersand replacement=\&]
  {
     \&(X=X_{\emptyset},\mathcal{X}=\mathcal{I}_{\emptyset},\mu,T_1,T_2,T_3)\&   \\
     (X_{\{3\}},\mathcal{I}_{\{3\}},\mu,T_1,T_2,\id) \& (X_{\{2\}},\mathcal{I}_{\{2\}},\mu,T_1,\id,T_3) \& (X_{\{1\}},\mathcal{I}_{\{1\}},\mu,\id,T_2,T_3) \\
     (X_{\{2,3\}},\mathcal{I}_{\{2,3\}},\mu,T_1,\id,\id) \& (X_{\{1,3\}},\mathcal{I}_{\{1,3\}},\mu,\id,T_2,\id) \& (X_{\{1,2\}},\mathcal{I}_{\{1,2\}},\mu,\id,\id,T_3)  \\
  \& (X_{[3]},\mathcal{I}_{[3]},\mu,\id,\id,\id) \&   \\};
  \path[-stealth]
    (m-1-2) edge node[above] {$ $} (m-2-1)
    (m-1-2) edge node[above] {$ $} (m-2-2)
    (m-1-2) edge node[above] {$ $} (m-2-3)
    (m-2-1) edge node[above] {$ $} (m-3-1)
    (m-2-1) edge node[above] {$ $} (m-3-2)
    (m-2-2) edge node[above] {$ $} (m-3-1) 
    (m-2-2) edge node[above] {$ $} (m-3-3) 
    (m-2-3) edge node[above] {$ $} (m-3-2)
    (m-2-3) edge node[above] {$ $} (m-3-3)
    (m-3-1) edge node[above] {$ $} (m-4-2)
    (m-3-2) edge node[above] {$ $} (m-4-2)
    (m-3-3) edge node[above] {$ $} (m-4-2)      
    ;     
\end{tikzpicture}
\caption{Diagram of invariant factors for $(X,\mathcal{X},\mu,T_1,T_2,T_3)$} 
\end{figure}}

We remark that $\mathcal{I}_{\{i_1,\ldots,i_j\}}$ is an extension of $\bigvee\limits_{i\notin \{i_1,\ldots,i_j\}} \mathcal{I}_{\{i_1,\ldots,i_j,i\}}$.

The following proposition explains the topological model we are looking for.
\begin{prop} \label{Prop:ConstructionModel}
Let $(X,\mathcal{X},T_1,\ldots,T_d)$ be a $Z$-sated measure preserving system with commuting transformations. Suppose that all its further invariant factors are free, meaning that the $\Z^{d-\# J}$-action on $X_{J}$ induced by $T_{i}, i\notin J$ is free for all $J\subseteq [d]$. Then there exists a strictly ergodic topological model for the (measurable) diagram of invariant factors of $(X,\mathcal{X},T_1,\ldots,T_d)$, meaning that there exists a strictly ergodic model $\wh{X}_{J}$ of $X_{J}$ for all $J\subseteq [d]$ such that $\wh{X}_{J}\to \wh{X}_{J'}$ is a topological model of $X_{J}\to X_{J'}$ for all $J\subseteq J'\subseteq [d]$.
\end{prop}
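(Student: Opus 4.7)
The strategy is to build the models $\wh{X}_J$ by downward induction on $|J|$, extending the partial topological diagram one node at a time via the Weiss--Rosenthal theorem (Theorem~\ref{Weiss}). The base case $|J|=d$ is immediate: ergodicity of $(X,\mu,T_1,\ldots,T_d)$ forces the fully invariant factor $X_{[d]}$ to be a one-point space, which we take as $\wh{X}_{[d]}$.

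For the inductive step, assume that strictly ergodic models $\wh{X}_{J'}$ and compatible topological factor maps have been built for all $J'\subseteq [d]$ with $|J'|>n$, faithfully realizing the measurable sub-diagram. Fix $J$ with $|J|=n$. I would construct $\wh{X}_J$ in two stages. First, build a strictly ergodic topological model $\wh{W}_J$ of the intermediate measurable factor
\[
W_J := \bigvee_{i\notin J} \mathcal{I}_{J\cup\{i\}}
\]
of $X_J$, equipped with topological factor maps $\wh{W}_J \to \wh{X}_{J\cup\{i\}}$ for each $i\notin J$ commuting with the already-built diagram. Second, apply Theorem~\ref{Weiss} to the factor map $X_J\to W_J$ (an extension of ergodic systems that are free on the effective quotient group, by the freeness hypothesis) with target $\wh{W}_J$, producing $\wh{X}_J$ together with a topological factor map $\wh{X}_J\to \wh{W}_J$; composing with $\wh{W}_J\to \wh{X}_{J\cup\{i\}}$ yields the required downward maps, which automatically commute with the rest of the diagram.

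The main obstacle is the first stage. The natural candidate for $\wh{W}_J$ is the closed support of the pushforward of the joining measure under the canonical embedding $W_J \hookrightarrow \prod_{i\notin J} X_{J\cup\{i\}}$, transported into $\prod_{i\notin J} \wh{X}_{J\cup\{i\}}$ via the inductive isomorphisms, with coordinate projections providing the required topological factor maps. What requires justification is that this closed invariant subset is minimal and carries a unique invariant probability measure. Here the $Z$-satedness hypothesis enters via Lemma~\ref{lem:SatednessFactors}: each $X_J$ is $\bigvee_{j\notin J} Z^{\{j\}}$-sated, which forces $W_J$ to be determined rigidly by its projections to the $X_{J\cup\{i\}}$'s as a relatively independent-type joining over the deeper common factors already modeled. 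Combined with the strict ergodicity of the $\wh{X}_{J\cup\{i\}}$'s and the commutativity of the already-built sub-diagram, this rigidity pins down both the support and the invariant measure uniquely, yielding strict ergodicity of $\wh{W}_J$. An alternative implementation of the same idea is to iterate Theorem~\ref{Weiss} one child $i\notin J$ at a time: at each step one replaces the current partial model with the topological fiber product with the next $\wh{X}_{J\cup\{i\}}$ over their common already-modeled deeper factor, and the satedness-driven rigidity guarantees that each such fiber product remains strictly ergodic, so Weiss--Rosenthal applies cleanly at every stage.
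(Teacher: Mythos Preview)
Your proposal is correct and matches the paper's proof closely: the paper also inducts downward on $|J|$, defines the intermediate target $Y_J$ (your $\wh{W}_J$) as the support of the pushforward joining in $\prod_{i\notin J}\wh{X}_{J\cup\{i\}}$, uses Lemma~\ref{lem:SatednessFactors} to show that any ergodic measure on $Y_J$ is determined by its projections to successively deeper levels (iterating all the way down to the bottom $\Z$-system factors, where the only joining is the product), and then applies Theorem~\ref{Weiss} to $X_J\to Y_J$. The one technical point the paper makes explicit and you gloss over is the ``non-degeneracy of common invariant factors'': because the deeper sub-diagram is already realized \emph{topologically}, any two coordinates of $Y_J$ project identically onto each shared deeper factor $\wh{X}_{\tilde J}$, which is what prevents spurious multiple copies of the same factor from appearing when you push the joining down level by level.
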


\begin{rem}
The assumptions of freeness of the invariant factors is superfluous, we can pass to an extension and satisfy this condition. For instance, for any $J\subseteq [d]$ we may consider an ergodic free $\mathbb{Z}^{d-\vert J \vert}$-action on probability space $(Y_J,\nu_J)$, define $T_i=\id$ for $i\notin J$ and replace $X$ by an ergodic joining of $X$ with all the $Y_J$, $J\subseteq [d]$. It is not hard to check that we get a system where all the induced actions on the invariant factors are free. 
\end{rem}

\begin{proof}
We proceed by induction in the level of the diagram (from bottom to top). Since the factors $(X_{[d]\setminus \{i\}},T_{i})$, $i=1,\ldots,d$ are free ergodic $\Z$-systems, by the Jewett-Krieger Theorem, there exist strictly ergodic models $\wh{X}_{[d]\setminus \{i\}}$ for $X_{[d]\setminus \{i\}}$, $i=1,\ldots,d$. Moreover, it is easy to see the there is only one ergodic joining of the systems $(\wh{X}_{[d]\setminus \{i\}},\id,\dots,T_{i},\dots,\id), i=1,\ldots,d$, which is their product measure. 

What we need to be careful is that Jewett-Krieger type theorems may fail when considering a diagram with a tree form (see for example Section 8, Theorem 15.35 in \cite{Glas}). In our setting, the satedness condition we impose  allows us to overcome this difficulty.

Suppose now we have built a diagram until the depth $h+1\leq d$, i.e. we have constructed strictly ergodic models $\wh{X}_{J}$ for all factors systems 
\[ X_{J} \text{ where } J \subseteq [d], \# J =h+1 .\]  
Fix $J\subseteq [d]$ with  $\# J =h$. Recall that $\bigvee_{i\notin J}\mathcal{I}_{J\cup \{i\}}$ is a factor of $\mathcal{I}_{J}$. By induction hypothesis, we have strictly a ergodic model $\widehat{X}_{J\cup \{i\}}$ for $X_{J\cup \{i\}}$. Let $\phi_{J\cup\{i\}}$ denote the (measurable) factor map from $(X,\mu)$ to $(\widehat{X}_{J\cup \{i\}},\widehat{\mu}_{J\cup\{i\}})$.

Let $X^{*}_{J}$ denote the factor system of $X$ corresponding to the $\sigma$-algebra $\bigvee_{i\notin J}\mathcal{I}_{J\cup \{i\}}$. We look for a topological model $Y_J$ of $X^{*}_{J}$ which is a minimal subsystem of $\prod_{i \notin J} \widehat{X}_{J\cup \{i\}}$. Let

\[Y_J=\text{supp}\Bigl (\prod_{i\notin J} (\phi_{J\cup\{i\}})_{\ast}\mu\Bigr ) \text{ in } \prod_{i \notin J} \widehat{X}_{J\cup \{i\}}.  \]
Then
$Y_J$ is the smallest closed subset in $\prod_{i \notin J} \widehat{X}_{J\cup \{i\}}$ with measure 1.

{\bf Claim:} $Y_J$ is strictly ergodic. 

We start with an important property of $Y_J$. Consider two coordinates in  $Y_J$, say the $J\cup \{i_1\}$ and $J\cup \{i_2\}$ coordinates. Let $J_1,J_2\subseteq [d]$ such that $J\cup \{i_1\}\cup J_1= J\cup \{i_2\}\cup J_2\coloneqq\tilde{J}$. Let  $\widehat{\phi}_{J_1}$ and $\widehat{\phi}_{J_2}$ be the (continuous) projections from the coordinates $J\cup \{i_1\}$ and $J\cup \{i_2\}$ onto their respective $\tilde{J}$-factor. Then their projections coincide. More precisely, we have that the $J\cup \{i_1\}$ and $J\cup \{i_2\}$ coordinates of  a typical point in $Y_J$ have the form $(\phi_{J\cup\{i_1\}}(x),\phi_{J\cup \{i_2\} }(x))$ (typical in the sense that they have measure 1) and thus 
\[(\widehat{\phi}_{J_1}(\phi_{J\cup\{i_1\}}x),\widehat{\phi}_{J_2}(\phi_{J\cup\{i_2\}}x))=(\phi_{\tilde{J}}x,\phi_{\tilde{J}}x)\in \Delta_{\widehat{X}_{\tilde{J}}}, \]
where $\Delta_{\widehat{X}_{\tilde{J}}}$ is the diagonal of $\widehat{X}_{\tilde{J}}$. Since the projections $\widehat{\phi}_{J_1}$ and $\widehat{\phi}_{J_2}$ are continuous, we have that $\widehat{\phi}_{J_1}^{-1}\times \widehat{\phi}_{J_2}^{-1}(\Delta_{\widehat{X}_{\tilde{J}}})$ is a closed set in $\widehat{X}_{J\cup \{i_1\}}\times \widehat{X}_{J\cup \{i_2\}}$ and is of measure 1. Therefore, the projection of $Y_{J}$ onto the coordinates $J\cup\{i_1\}$ and $J\cup\{i_2\}$ is a subset of $\widehat{\phi}_{J_1}^{-1}\times \widehat{\phi}_{J_2}^{-1}(\Delta_{\widehat{X}_{\tilde{J}}})$. We refer to this property as {\em non-degeneracy of common invariant factors}. 

We are now ready to prove the claim. We prove inductively that the measure is determined by its projection onto the factors of $Y_J$ of level $d-1$. 
 
Let $\lambda_J$  be an ergodic measure on $Y_J$. The projection of $\lambda_J$ onto any $\widehat{X}_{J\cup \{i\}}$ is an ergodic measure and thus it is the unique ergodic measure $\widehat{\mu}_{J\cup\{i\}}$. Hence $\lambda_J$ is a joining of the systems $X_{J\cup \{i\}}$, $i=1,\ldots,d$. 

Take any $i\notin J$ and think of the measure $\lambda_J$ as a joining of $X_{J\cup \{i\}}$ with some system in the idempotent class $\bigvee_{j\notin J\cup \{i\}} Z^{\{j\}}$. This is possible since for $i'\neq i$, $X_{J\cup \{i'\}}$ is a member of the idempotent class $Z^{\{i'\}}$.  
By Lemma \ref{lem:SatednessFactors}, $X_{J\cup \{i\}}$ is $\bigvee_{j\notin J\cup \{i\}} Z^{\{j\}}$-sated  for all $i$. So $\lambda_J$ can be projected in the $X_{J\cup \{i\}}$-part to the factor corresponding to the $\sigma$-algebra \[ \bigvee_{j,i \notin J, j\neq i} \mathcal{I}_{J\cup \{i,j\}}. \]

Arguing similarly in the other coordinates, we have that the joining $\lambda_J$ is uniquely determined by its projection onto the systems of level $h+1$. It is worth noting that the non-degeneracy of common invariant factors in $Y_J$ ensures that the same invariant factor arising from two different coordinates in $Y_J$ are the same. This ensures that when projecting $\lambda$ onto the invariant factors of level $h+1$, there are no multiple copies of a given factor.   

Assume now that $\lambda_J$ is determined by its projection onto its factors of level $k$ for $h+1\leq k<d-1$. We show that it is also determined by its projection onto its factors of level $k+1$.

Following the notation from the beginning of the claim, the projection from $Y_J$ to its factors of level $k$ is $(\prod\limits_{J\subseteq J', \# J'=k} \widehat{\phi}_{J'} )Y_J$ and a similar expression holds for $k+1$.
Pick one of the factors, say $\widehat{\phi}_{J_0}(Y_J)=\widehat{X}_{J_0}$, and think of $(\prod\limits_{J\subseteq J', \# J'=k} \widehat{\phi}_{J})_{\ast}\lambda_J$ as a joining of  $X_{J_0}$ with the rest of the systems 
$\widehat{\phi}_{J'}(Y_J)=X_{J'}$, $J'\neq  J_0$, $\#J'=k$. All these other systems can be put together in the idempotent class $\bigvee_{j \notin J_0} Z^{\{j\}}$ since there exists $j\in J'\setminus J_0$ for each $J'$. By Lemma \ref{lem:SatednessFactors}, the factor $X_{J_0}$ is $\bigvee_{j \notin J_0} Z^{\{j\}}$- sated and therefore  $(\prod\limits_{J\subseteq J', \# J'=k} \widehat{\phi}_{J})_{\ast}\lambda_J$ can be pushed in the $X_{J_0}$ part to the factor $\bigvee_{j\notin J_0} X_{J_0\cup \{j\}}$. Arguing similarly for each $J'$, $\#J'=k$, the joining can be pushed to a joining of the systems of level $k+1$. Here it is important to stress that the non degeneracy of invariant factors discussed previously guarantees that in the joining there are no multiple (non isomorphic) copies of the same factor of level $k+1$. 
 
We get in the end that $\lambda_J$ is determined by its projection onto the factors of level $d-1$, which are $\Z$-systems. More precisely, these systems are $\widehat{X}_{[d]\setminus\{i\}}$ for $i\notin J$. It is not hard to see that $(\prod_{i\notin J} \widehat{X}_{[d]\setminus\{i\}}, \langle \{T_i\}_{i\notin J} \rangle)$ is a product of strictly ergodic systems and thus it is strictly ergodic, too (see for instance \cite{DS}, Section 4). So $Y_J$ is uniquely ergodic.

 Since any subsystem of $Y_{J}$ would have an invariant measure with support smaller than $\prod_{i\notin J} (\phi_{J\cup\{i\}})_{\ast}\mu$, we have that $Y_J$ is minimal and thus strictly ergodic. This concludes the claim.

\

By Theorem \ref{Weiss}, there exists a strictly ergodic models $\wh{X}_{J}\to Y_J$ for $X_{J}\to X^{*}_{J}$ for all $J\subseteq [d]$ with $\# J=h$. This concludes the induction. We end up at $h=0$ with a strictly ergodic model $\wh{X}_{\emptyset}$ for $X_{\emptyset}=X$, which finishes the proof.  
\end{proof}

\subsection{Strict ergodicity for the cube structure}\label{Sec:ts}
Let $(X,\mu,T_{1},\dots,T_{d})$ be a measure preserving system with commuting transformations. Recall from Section \ref{Sec:HostMeasures} that $T_i^{[d]}$ denote the diagonal transformation of $T_i$ on $X^{[d]}$ and that for $i\leq d$, the upper and lower $i$-face transformations on $X^{[d]}$ to itself are \[(\mathcal{F}_i^{0}(x_{\epsilon})_{\epsilon\in V_d})_{\epsilon}=\begin{cases} T_ix_{\epsilon} ~ \text{ if } ~ \epsilon_i=0 \\ x_{\epsilon} ~ \text{ if } ~ \epsilon_i=1  \end{cases}
~ \text{  and  } ~ ~ \mathcal{F}_i^{1}(x_{\epsilon})_{\epsilon\in V_d}=\begin{cases} x_{\epsilon} ~ \text{ if } ~ \epsilon_i=0 \\ T_ix_{\epsilon} ~ \text{ if } ~ \epsilon_i=1  \end{cases} \]

\begin{notation}
	Let $(X,\mu,T_{1},\dots,T_{d})$ be a measure preserving system with commuting transformations. We let $\mathcal{G}_{T_1,\ldots,T_d}$ denote the group spanned by the upper and lower face transformations. 
	If $k\leq d$, $\mathcal{G}_{T_1,\ldots,T_k}$ denote the projection of $\mathcal{G}_{T_1,\ldots,T_d}$ into the first $2^k$ coordinates. Thus $\mathcal{G}_{T_1,\ldots,T_k}$ is spanned by the upper face transformations of $T_1,\ldots,T_k$ in $X^{[k]}$ and the $d$ diagonal transformations in $X^{[k]}$.
\end{notation}

Let $(X,T_1,\ldots,T_d)$ be a topological dynamical system. We define the space of cubes of $X$ associated to $T_1,\ldots, T_d)$ as 

\[\Q_{T_1,\ldots,T_d}(X)=\overline{ \left \{ \left (\prod_{i=1}^d T_i^{n_i\e_i}x\right )_{\e \in V_d} \colon x\in X, (n_1,\ldots,n_d)\in \Z^d \right \} } \]

For example, for a system of three commuting transformations $(X,T_1,T_2,T_3)$, $\Q_{T_1,T_2,T_3}(X)$ is the closure in $X^8$ of the set 
\[(x,T_1^{n_1}x,T_2^{n_2}x,T_1^{n_1}T_2^{n_2}x,T_3^{n_3}x,T_1^{n_1}T_3^{n_3}x,T_2^{n_2}T_3^{n_3}x,T_1^{n_1} T_2^{n_2}T_3^{n_3}x) \] 
where $x\in X$ and $(n_1,n_2,n_3)\in \Z^3$. 

The topological structure $\Q_{T_1,T_2}(X)$ was studied in \cite{DS1} and it was shown that it provides criteria to characterize product systems and their factors. Up to now, we do not know what are the analogous results for spaces of cubes with more than 2 transformations. 

The space $\Q_{T_1,\ldots,T_d}(X)$ is invariant under the upper face transformations $\mathcal{F}_i^{1}$, $i=1,\ldots,d$ and under the diagonal ones $T_{i}^{[d]}$, $i=1,\ldots,d$. Thus, it also invariant under $T_i^{[d]}(\mathcal{F}_i^1)^{-1}=\mathcal{F}_i^{0}$. Moreover, we have

\begin{prop} \label{Prop:MinimalQ}
	Let $(X,T_1,\ldots,T_d)$ be a minimal system with commuting transformations. Then $(\Q_{T_1,\ldots,T_d}(X),\mathcal{G}_{T_1,\ldots,T_d})$ is a minimal topological dynamical system.
\end{prop}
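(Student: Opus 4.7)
The plan is to proceed by induction on $d$. For the base case $d=1$, the space $\Q_{T_1}(X)$ is the closure in $X\times X$ of $\{(x, T_1^n x) : x\in X, n\in\Z\}$, and $\mathcal{G}_{T_1}$ is generated by $\mathcal{F}_1^1\colon (x_0,x_1)\mapsto (x_0, T_1 x_1)$ together with the diagonal transformations $T_j^{[1]}=T_j\times T_j$ for $j=1,\dots,d$. Minimality of $(X,T_1,\dots,T_d)$ makes the diagonal sub-action minimal on the diagonal $\Delta(X)=\{(x,x):x\in X\}$, and applying powers of $\mathcal{F}_1^1$ fills in the second-coordinate direction, so a short Ellis-semigroup / joint-recurrence argument gives minimality.

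For the inductive step, I would set $Y=\Q_{T_1,\dots,T_{d-1}}(X)$ and identify $\Q_{T_1,\dots,T_d}(X)$ with a closed subset of $Y\times Y$, the first factor recording the face $\{\epsilon_d=0\}$ of each cube and the second factor recording the face $\{\epsilon_d=1\}$. Under this identification, $\mathcal{G}_{T_1,\dots,T_d}$ becomes the subgroup of $\mathcal{G}_{T_1,\dots,T_{d-1}}\times\mathcal{G}_{T_1,\dots,T_{d-1}}$ generated by the diagonal copy of $\mathcal{G}_{T_1,\dots,T_{d-1}}$ together with $\mathcal{F}_d^1=\id\times T_d^{[d-1]}$; explicitly, it consists of pairs $(g,\, g\, (T_d^{[d-1]})^n)$ with $g\in \mathcal{G}_{T_1,\dots,T_{d-1}}$ and $n\in\Z$. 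By the inductive hypothesis, $(Y,\mathcal{G}_{T_1,\dots,T_{d-1}})$ is minimal.

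The crucial reduction is then: every minimal closed $\mathcal{G}_{T_1,\dots,T_d}$-invariant subset $M\subseteq\Q_{T_1,\dots,T_d}(X)$ contains a point of the form $(\vec z,\vec z)$ with $\vec z\in Y$. Once this is known, the minimality of $(Y,\mathcal{G}_{T_1,\dots,T_{d-1}})$ applied to the diagonal $\{(\vec z,\vec z):\vec z\in Y\}$ under the diagonal sub-action forces this entire sub-diagonal into $M$; invariance under $\mathcal{F}_d^1$ together with the definition of $\Q_{T_1,\dots,T_d}(X)$ as the $\mathcal{G}_{T_1,\dots,T_d}$-orbit closure of $\Delta(X)$ then yields $M=\Q_{T_1,\dots,T_d}(X)$.

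To produce such a diagonal point in $M$, I would fix any $(\vec x,\vec y)\in M$, pass to the Ellis semigroup of the action on $\Q_{T_1,\dots,T_d}(X)$, and pick a minimal idempotent $u$ fixing $\vec x$ (available because the projection of $M$ onto $Y$ in the first coordinate is surjective by the inductive hypothesis). Then $u(\vec x,\vec y)=(\vec x, u\vec y)\in M$, and applying topological multiple recurrence for the commuting family $T_1,\dots,T_d$ at the level of $Y$ (using that $T_d^{[d-1]}$ lies in $\mathcal{G}_{T_1,\dots,T_{d-1}}$) produces elements of $\mathcal{G}_{T_1,\dots,T_d}$ driving $(\vec x,u\vec y)$ arbitrarily close to the diagonal of $Y$. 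The main obstacle is precisely this last step: the single transformation $T_d^{[d-1]}$ is not minimal on $Y$, so one cannot invoke any one-transformation minimality result and is forced to exploit the full diagonal action of $\mathcal{G}_{T_1,\dots,T_{d-1}}$ in conjunction with $\id\times T_d^{[d-1]}$. This is the $d$-dimensional analog of the strategy used by the authors in \cite{DS} for the case $d=2$.
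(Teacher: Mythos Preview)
Your inductive strategy and the reduction to showing that every $\mathcal{G}_{T_1,\ldots,T_d}$-minimal subset $M\subseteq\Q_{T_1,\ldots,T_d}(X)$ contains a diagonal point $(\vec z,\vec z)$ is exactly the route taken in \cite{DS1}, Proposition~3.5, following Glasner's argument in \cite{Glas}, p.~46; so at the level of overall plan your proposal matches the paper's reference.

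There is, however, a genuine gap, and you flag it yourself. Two points. First, the step ``pick a minimal idempotent $u$ fixing $\vec x$ \ldots\ then $u(\vec x,\vec y)=(\vec x,u\vec y)$'' is not correct as written: an idempotent in $E(\Q_{T_1,\ldots,T_d}(X),\mathcal{G}_{T_1,\ldots,T_d})$ need not act diagonally, so one only gets $(\pi_1(u)\vec x,\pi_2(u)\vec y)$ with $\pi_1(u)\neq\pi_2(u)$ in general. The Glasner/DS1 argument instead chooses a minimal idempotent $u\in E(Y,\mathcal{G}_{T_1,\ldots,T_{d-1}})$ for the \emph{base} system $Y=\Q_{T_1,\ldots,T_{d-1}}(X)$ and applies its diagonal lift $\tilde u=\lim g_\alpha\times g_\alpha$, which does send $M$ into $M$ and does act coordinatewise as $u$. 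Since $E(Y\times Y,\mathcal{G}_{diag})$ is a quotient of $E(Y,\mathcal{G}_{T_1,\ldots,T_{d-1}})$, $\tilde u$ is again a minimal idempotent, so $(u\vec x,u\vec y)\in M$ is almost periodic for the diagonal sub-action. Second, your appeal to ``topological multiple recurrence'' to reach the diagonal is not the mechanism that closes the argument; what is actually used is a further enveloping-semigroup manipulation exploiting that $T_d^{[d-1]}\in\mathcal{G}_{T_1,\ldots,T_{d-1}}$ (so that both $\id\times T_d^{[d-1]}$ and $T_d^{[d-1]}\times\id$ lie in $\mathcal{G}_{T_1,\ldots,T_d}$), together with the $\mathcal{G}_{diag}$-almost-periodicity just obtained. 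This is the step carried out in \cite{DS1} and \cite{Glas}; your write-up stops just short of it.
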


\begin{proof}
	The proof is identical to the one given in Proposition 3.5 in \cite{DS1}, which follows Glasner's  proof of a similar result in page 46 in \cite{Glas}.  
\end{proof}

The proof of the next theorem is similar to the one given in Proposition \ref{Prop:ConstructionModel}, but involves different copies of some systems and more actions. Having more actions is useful because it allows to choose or combine them to get suitable joinings.

\begin{thm} \label{Thm:StrictlyModel}
Every $Z$-sated measure preserving system $(X,\mu,T_1,\ldots,T_d)$  has a strictly ergodic topological model $(\widehat{X}$ $,T_1,\ldots,T_d)$ such that $(\Q_{T_1,\ldots,T_d}(\wh{X})$ $,\mathcal{G}_{T_1,\ldots,T_d})$ is strictly ergodic.
\end{thm}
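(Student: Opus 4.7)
The plan is to extend the strategy of Proposition \ref{Prop:ConstructionModel} from the diagram of invariant factors to the richer cube structure. After passing to a further extension if needed so that all induced actions on invariant factors are free, I first invoke Proposition \ref{Prop:ConstructionModel} to produce a strictly ergodic topological model $\widehat{X}$ of $X$, together with compatible strictly ergodic models for every invariant factor $X_J$. Minimality of $(\Q_{T_1,\ldots,T_d}(\widehat{X}),\mathcal{G}_{T_1,\ldots,T_d})$ is handed to us by Proposition \ref{Prop:MinimalQ}, so the essential task is to promote minimality to unique ergodicity. The natural candidate invariant measure is the Host measure $\mu_{T_1,\ldots,T_d}$: it is $\mathcal{G}_{T_1,\ldots,T_d}$-invariant (being preserved by the face transformations $\mathcal{F}_i^0,\mathcal{F}_i^1$ and hence by the diagonals $T_i^{[d]}=\mathcal{F}_i^0\mathcal{F}_i^1$), and its support is contained in $\Q_{T_1,\ldots,T_d}(\widehat{X})$.

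I would attack uniqueness by induction on $d$. Let $\lambda$ be an ergodic $\mathcal{G}_{T_1,\ldots,T_d}$-invariant measure on $\Q_{T_1,\ldots,T_d}(\widehat{X})$. Using the diagonal transformations $T_i^{[d]}\in\mathcal{G}_{T_1,\ldots,T_d}$ together with the unique ergodicity of $\widehat{X}$, each coordinate projection of $\lambda$ is ergodic under $(T_1,\ldots,T_d)$ and hence equals $\widehat{\mu}$, so $\lambda$ is a $2^d$-fold self-joining of $\widehat{X}$. Splitting $V_d=V_{d-1}\times\{0,1\}$, the face transformations $\mathcal{F}_i^0,\mathcal{F}_i^1$ with $i<d$ act independently on the ``bottom'' and ``top'' halves of the cube; applied to the $(d-1)$-dimensional cube structure sitting inside $\widehat{X}$, the induction hypothesis forces the marginal of $\lambda$ on each half to be $\mu_{T_1,\ldots,T_{d-1}}$. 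The remaining issue is to identify the coupling between the two halves: invariance under $\mathcal{F}_d^1$ forces it to live above the algebra $\mathcal{I}_{T_d^{[d-1]}}$ on $X^{[d-1]}$, and Lemma \ref{lem:MeasurableI_k} places this $\sigma$-algebra inside $Z_d^{[d-1]}$. This is exactly where $Z$-satedness forces $\lambda$ to be the relatively independent joining that defines $\mu_{T_1,\ldots,T_d}$ via Corollary \ref{cubeind}.

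The main obstacle I anticipate lies in this last step: rigorously showing that face invariance together with the satedness hypotheses forces the coupling of the two $\mu_{T_1,\ldots,T_{d-1}}$ halves to be the relatively independent one over $\mathcal{I}_{T_d^{[d-1]}}$. This is where $Z$-satedness plays its decisive role, via Corollary \ref{cubeind} applied fiberwise together with Lemma \ref{lem:MeasurableI_k}, which together rule out any extra coupling arising from invariant factors of lower face transformations. The base case $d=1$ reduces to verifying that $(\widehat{X}\times\widehat{X},\mu\times_{\mathcal{I}_{T_1}}\mu,\mathcal{G}_{T_1})$ is strictly ergodic, which follows from the strictly ergodic model of $X_{\{1\}}$ provided by Step 1 together with a straightforward check that products and relative products of strictly ergodic systems remain strictly ergodic.
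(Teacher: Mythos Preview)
Your inductive strategy, splitting $V_d=V_{d-1}\times\{0,1\}$ and identifying the two half-marginals by induction, runs into a real gap at the coupling step.  Even granting that both marginals of an ergodic $\mathcal{G}_{T_1,\ldots,T_d}$-invariant $\lambda$ equal $\mu_{T_1,\ldots,T_{d-1}}$, invariance under $\mathcal{F}_d^0=T_d^{[d-1]}\times\id$ and $\mathcal{F}_d^1=\id\times T_d^{[d-1]}$ does \emph{not} by itself force $\lambda$ to be the relatively independent product over $\I_{T_d^{[d-1]}}$: that would require a satedness condition on the cube system $(X^{[d-1]},\mu_{T_1,\ldots,T_{d-1}})$ with respect to $Z^{T_d^{[d-1]}}$, and no such condition follows from $Z$-satedness of $X$.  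The tools you cite do not close this: Corollary~\ref{cubeind} and Lemma~\ref{lem:MeasurableI_k} describe the structure of the Host measure itself (each vertex is relatively independent over $Z_d$; the $\sigma$-algebra $\I_{T_k^{[k-1]}}$ sits below $Z_k^{[k-1]}$), but they do not single out $\mu_{T_1,\ldots,T_d}$ among all $\mathcal{G}_{T_1,\ldots,T_d}$-invariant self-joinings.  There is also a mismatch in the induction hypothesis: the theorem for $d-1$ is about a $(d-1)$-transformation system, whereas your half-cube still carries the extra diagonal $T_d^{[d-1]}$.

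The paper avoids this difficulty by \emph{not} inducting on $d$.  Instead, for fixed $d$ it attaches to every face $V$ of the cube the invariant factor $\widehat X_{[d]\setminus J}$ (where $J$ records the fixed coordinates of $V$), equipped with a carefully chosen $\Z^{2d}$-action.  An ergodic invariant $\lambda$ is then peeled down level by level in the dimension of the faces: at a given vertex (or more generally a given face), one selects $d$ of the $2d$ face transformations so that they act as $T_1,\ldots,T_d$ on that face but leave at least one $T_i$ trivial on every other face of the same dimension; this exhibits $\lambda$ as a joining of $X_J$ with systems in $\bigvee_{i\in J}Z^{\{i\}}$, and Lemma~\ref{lem:SatednessFactors} (satedness of $X_J$ inherited from $X$) pushes that coordinate down to the next level.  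Iterating lands on a product of one-dimensional strictly ergodic factors, forcing uniqueness.  The key mechanism is thus satedness of the invariant \emph{factors} $X_J$, applied vertex-by-vertex inside a single cube, rather than any satedness of the cube system $X^{[d-1]}$.
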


To help the readers better understand the proof of the general case, we start with a simple case.

\begin{proof}[Proof of Theorem \ref{Thm:StrictlyModel}, d=3]
	Let $\wh{X}_{J}$ be constructed as in Proposition \ref{Prop:ConstructionModel} for all $J\subseteq[3]$.
	By Theorem \ref{Prop:MinimalQ}, $(\Q_{T_1,\ldots,T_3}(\wh{X}),\mathcal{G}_{T_1,\ldots,T_3})$ is minimal. So it suffices to prove it is uniquely ergodic.
	
  The 3 dimensional cube $V_{3}$ has 8 vertices, 12 edges and 6 faces. Let $\vec{v}_{1}=(1,0,0), \vec{v}_{2}=(0,1,0),\vec{v}_{3}=(0,0,1)$.  On each vertex of $V_{3}$, we assign a copy of $\wh{X}$ and denote them by $\wh{X}_{1},\dots,\wh{X}_{8}$ (as in Figure \ref{CubeFactors}). On each edge of $V_{3}$ which is parallel to $\vec{v}_{i}$, we assign a copy of $\wh{X}_{\{i\}}$ and denote them by $\wh{E}_{1},\dots,\wh{E}_{12}$. On each face of $V_{3}$ which is perpendicular to $\vec{v}_{i}$, we assign a copy of $\wh{X}_{[3]\backslash\{i\}}$ and denote them by $\wh{F}_{1},\dots,\wh{F}_{6}$. 
  
  The group $\mathcal{G}_{T_1,\ldots,T_3}$ is a $\Z^{6}$-action generate by $\mathcal{F}_{i}^{j},j\in\{0,1\}, i=1,2,3$, each of which correspond to a 2 dimensional face on $V_{3}$. To be more precise, $\mathcal{F}_{1}^{0},\mathcal{F}_{2}^{0},\mathcal{F}_{3}^{0},\mathcal{F}_{1}^{1},\mathcal{F}_{2}^{1},\mathcal{F}_{3}^{1}$ correspond to the faces $\e_{1}=0,\e_{2}=0,\e_{3}=0,\e_{1}=1,\e_{2}=1,\e_{3}=1$, respectively. On each $Y=\wh{X}_{k}$, $\wh{E}_{k}$ or $\wh{F}_{k}$, we define the action $\mathcal{F}_{i}^{j}$ to be $T_{i}$ if the vertex/edge/face to which $Y$ belongs is contained in the face corresponding to the action $\mathcal{F}_{i}^{j}$, and let $\mathcal{F}_{i}^{j}$ be the identity map otherwise. For example,  $\mathcal{F}_{1}^{0},\mathcal{F}_{2}^{0},\mathcal{F}_{3}^{0},\mathcal{F}_{1}^{1},\mathcal{F}_{2}^{1},\mathcal{F}_{3}^{1}$ act respectively as
  \begin{itemize}
  	\item  $T_{1},\id,\id,\id,T_{2},T_{3}$ on the copy $Y_{1}$ of $\wh{X}$ located at $(0,1,1)$; 
  	\item  $T_{1},\id,\id,\id,T_{2},\id$ on the copy $Y_{2}$ of $\wh{X}_{\{3\}}$ located at the edge between $(0,1,0)$ and $(0,1,1)$;
  	\item  $T_{1},\id,\id,\id,\id,\id$ on the copy $Y_{3}$ of $\wh{X}_{\{2,3\}}$ located at the face containing $(0,0,0),(0,1,0),(0,0,1)$ and $(0,1,1)$.
  \end{itemize}
  It is easy to see that the projection of $(\Q_{T_1,\ldots,T_3}(\wh{X}),\mathcal{G}_{T_1,\ldots,T_3})$ on each coordinate is $(\wh{X}_{k},\mathcal{G}_{T_1,\ldots,T_3}),k=1,\dots,8$. Moreover, if $Y=\wh{X}_{k}$ is on a vertex contained in the edge where $Y'=\wh{E}_{k'}$ is located, or $Y=\wh{E}_{k}$ is on an edge contained in the face where $Y'=\wh{F}_{k'}$ is located, then  $(Y',\mathcal{G}_{T_1,\ldots,T_3})$ is a factor system of $(Y,\mathcal{G}_{T_1,\ldots,T_3})$ where the factor map is the projection onto the smaller invariant sub-$\sigma$-algebra (for example, in the previous example, we have the factor maps $Y_{1}\to Y_{2}\to Y_{3}$).
  
  We are now ready to prove the theorem. Let $\lambda_{3}$ be a joining of $\wh{X}_{k},1\leq k\leq 8$, $\lambda_{2}$ be a joining of $\wh{E}_{k},1\leq k\leq 12$, $\lambda_{1}$ be a joining of $\wh{F}_{k},1\leq k\leq 6$ (with respect to $\mathcal{G}_{T_1,\ldots,T_3}$). It suffices to show that $\lambda_{3}$ is unique. 
  
  Consider the system $(\Q_{T_1,\ldots,T_3}(\wh{X}),\mathcal{F}_{1}^{0},\mathcal{F}_{2}^{0},\mathcal{F}_{3}^{0})$ (we choose 3 out of the 6 transformations). Its projection to the coordinate (0,0,0) is $(\wh{X},T_{1},T_{2},T_{3})$, while its projection to any other coordinate is of the form $(\wh{X},T'_{1},T'_{2},T'_{3})$ with $T'_{i}=T_{i}$ or $\id$, and at least one of $T'_{i}=\id$. This implies that $(\Q_{T_1,\ldots,T_3}(\wh{X}),\lambda_{3},\mathcal{F}_{1}^{0}$, $\mathcal{F}_{2}^{0},\mathcal{F}_{3}^{0})$ is a joining of $(\wh{X},T_{1},T_{2},T_{3})$ and 7 other systems which can be put together in the idempotent class $\bigvee_{i=1}^{3}Z^{\{i\}}$. 
  Since $X$ is Z-sated, the first part $\wh{X}$ of the joining is relatively independent over $\bigvee_{i=1}^{3}Z^{\{i\}}(\wh{X})=\bigvee_{i=1}^{3} \wh{X}_{\{i\}}=\bigvee_{k} \wh{E}_{k}$, where the last joining is taken over all $\wh{E}_{k}$ which are located at an edge containing (0,0,0) (there are 3 of them). We may argue in a similar way for other coordinates of $V_{3}$ (by choosing 3 other transformations). Finally, by viewing the 6 transformations altogether, we conclude that $\lambda_{3}$ is relatively independent over a joining of $\wh{E}_{k},k=1,\dots,12$. It is worth noting that the non-degeneracy of common invariant factors (see the discuss in the proof of the general case) ensures that there are no multiple copies of $\wh{E}_{k}$ in this joining. 
  
  Therefore it suffices the show that $\lambda_{2}$ is unique. Let $\wh{E}_{1}$ be located on the edge $(0,0,0)-(1,0,0)$ and consider the system $(\prod_{k=1}^{12}(\wh{E}_{k}),\mathcal{F}_{1}^{0},\mathcal{F}_{2}^{0},\mathcal{F}_{3}^{0})$. Its projection to $\wh{E}_{1}$ is $(\wh{X}_{\{1\}},\id,T_{2},T_{3})$, while its projection to any other coordinate is of the form $(\wh{X}_{\{j\}},T'_{1},T'_{2},T'_{3})$ with $T'_{i}=T_{i}$ or $\id$, and at least one of $T'_{2}$ and $T'_{3}$ is the identity map (we leave its verification to the readers). This implies that $(\prod_{k=1}^{12}(\wh{E}_{k}),\mathcal{F}_{1}^{0},\mathcal{F}_{2}^{0},\mathcal{F}_{3}^{0})$ is a joining of $(\wh{X}_{\{1\}},\id,T_{2},T_{3})$ and 11 other systems which can be put together in the idempotent class $Z^{\{1,2\}}\vee Z^{\{1,2\}}$. By Lemma \ref{lem:SatednessFactors}, 
  $\wh{X}_{\{1\}}$ is $Z^{\{2\}}\vee Z^{\{3\}}$-sated, and so the first part $\wh{E}_{1}$ of the joining is relatively independent over $Z^{\{2\}}\vee Z^{\{3\}}(\wh{X}_{\{1\}})=\wh{X}_{\{1,2\}}\vee\wh{X}_{\{1,3\}}=\bigvee_{k} \wh{F}_{k}$, where the last joining is taken over all $\wh{F}_{k}$ which are located at a face containing the edge $(0,0,0)\to (1,0,0)$ (there are 2 of them). We may argue in a similar way for other coordinates (by choosing 3 other transformations). Finally, by viewing the 6 transformations altogether, we conclude that $\lambda_{2}$ is relatively independent over a joining of $\wh{F}_{k},k=1,\dots,6$. Again the non-degeneracy of common invariant factors ensures that there are no multiple copies of $\wh{F}_{k}$ in this joining. 
  
  It now suffices to show that $\lambda_{1}$ is unique. It is easy to verify that if $\wh{E}_{k}\cong\wh{X}_{[3]\backslash{i}}$ is located at the face $\e_{i}=j, i=1,2,3,j\in\{0,1\}$, then $\mathcal{F}_{i'}^{j'}$ acts as $T_{i}$ for $(i',j')=(i,j)$, and all other $\mathcal{F}_{i'}^{j}$ are the identity map. This implies that $(\prod_{k=1}^{6}\wh{F}_{k},\mathcal{G}_{T_{1},\dots,T_{3}})$ is a product system. Since the systems $(\wh{X}_{[3]\backslash{i}}, T_{i})$, $i=1,2,3$ are uniquely ergodic, their product is uniquely ergodic as well (see for instance \cite{DS}, Chapter 4) and $\lambda_{1}$ is their product measure.
\end{proof}

{\scriptsize
\begin{figure}[h] \label{CubeFactors}
 \begin{tikzpicture}
  \matrix (m) [matrix of math nodes,row sep=3em,column sep=0em,minimum width=2em,ampersand replacement=\&]
  {
    \& (\widehat{X}_7,T_1,\id,\id,\id,T_2,T_3) \& \& (\widehat{X}_8,\id,\id,\id,T_1,T_2,T_3) \\
 (\widehat{X}_3,T_1,T_2,\id,\id,\id,T_3) \& \& (\widehat{X}_4,\id,T_2,\id,T_1,\id,T_3)    \\
 {} \& (\widehat{X}_5,T_1,\id,T_3,\id,T_2,\id) \& \& (\widehat{X}_6,\id,\id,T_3,T_1,T_2,\id)\\
     (\widehat{X}_1,T_1,T_2,T_3,\id,\id,\id) \&  \& (\widehat{X}_2,\id,T_2,T_3,T_1,\id,\id) \\};
  \path
    (m-4-1) edge (m-4-3) 
    (m-4-1) edge (m-2-1)
    (m-2-1) edge (m-1-2) 
    (m-1-2) edge (m-1-4)
    (m-4-3) edge (m-3-4)
    (m-4-1) edge (m-3-2) 
    (m-3-2) edge (m-3-4) 
    (m-3-4) edge (m-1-4)  
    (m-4-3) edge (m-2-3)
    (m-3-2) edge (m-1-2)
    (m-2-1) edge (m-2-3)
    (m-2-3) edge (m-1-4)  ;
         
\end{tikzpicture} \caption{The eight copies of X placed in the vertices of $V_3$.}
 \end{figure}
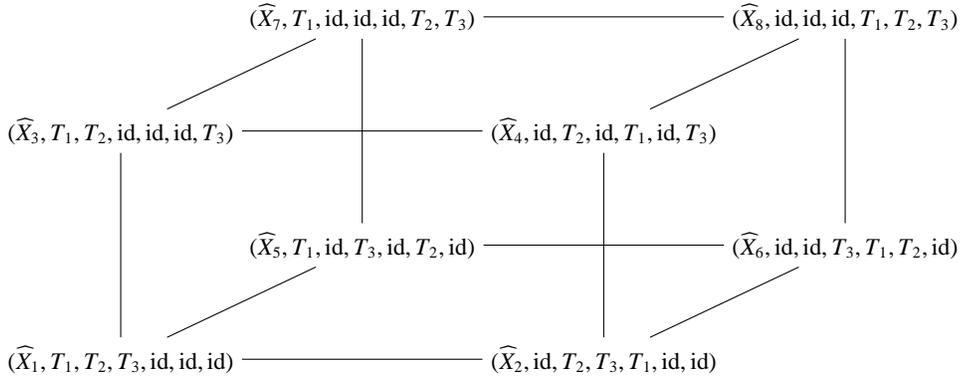 
 }
 
 {\small
  \begin{figure}[h]
 \begin{tikzpicture}
  \matrix (m) [matrix of math nodes,row sep=3em,column sep=2em,minimum width=2em,ampersand replacement=\&]
  {
     (\widehat{X}_3,T_1,T_2,\id,\id,\id,T_3) \& \& (\widehat{X}_4,\id,T_2,\id,T_1,\id,T_3)  \\
      \& (\widehat{F}_1,\id,T_2,\id,\id,\id,\id) \& \& \\
     (\widehat{X}_1,T_1,T_2,T_3,\id,\id,\id) \&  \& (\widehat{X}_2,\id,T_2,T_3,T_1,\id,\id)  \\};
  \path
    (m-1-1) edge node[above] {$(\widehat{E}_4,T_1,T_2,\id,\id,\id,\id)$} (m-3-1)
    (m-1-1) edge node[above] {$(\widehat{E}_3,\id,T_2,\id,T_1,\id,T_3)$} (m-1-3)
    (m-3-3) edge node[above] {$(\widehat{E}_2,\id,T_2,\id,T_1,\id,\id)$}(m-1-3)
    (m-3-1) edge node[below] {$(\widehat{E}_1,\id,T_2,T_3,\id,\id,\id)$}(m-3-3);
   ;      
\end{tikzpicture} 
\caption{Four vertices and their four edges and one face associated. The vertices are different copies of the system, the edges are the factor associated to the $\sigma$-algebra invariant under one transformations and the faces are associated to the $\sigma$-algebra invariant under two transformations. For instance, $\widehat{E}_{1}$ corresponds to the $\sigma$-algebra of $T_1$-invariant sets of $\widehat{X}_1$ (or $\widehat{X}_2$) and $\widehat{F}_1$ corresponds to the $\sigma$-algebra of $T_1,T_3$-invariant sets (of $\widehat{X}_1$ or $\widehat{X}_2$ or $\widehat{X}_3$ or $\widehat{X}_4$)}
 \end{figure}
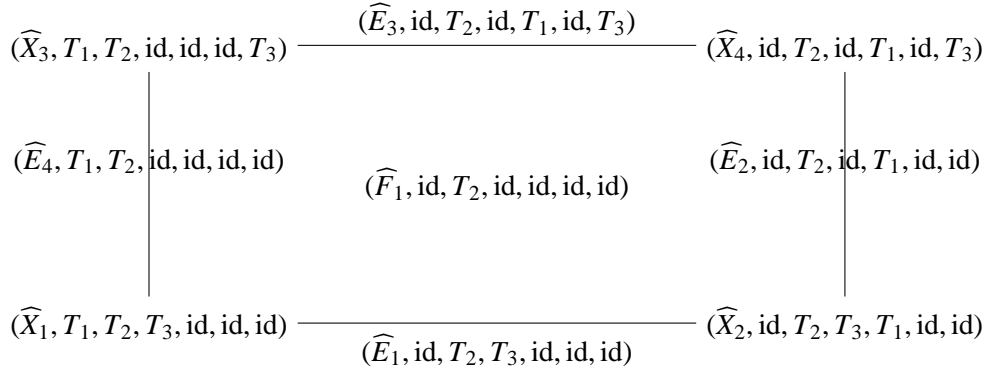
}

\begin{proof} [Proof of Theorem \ref{Thm:StrictlyModel}, the general case]
 We say that $V\subseteq V_{d}$ is a {\it face} if there exists $J\subseteq[d]$ and $a_{j}\in\{0,1\}, j\in J$ such that $V=V_{(a_{j})_{j\in J}}\coloneqq\{\e\in V_{d}\colon \e_{j}=a_{j}, j\in J\}$. We say the {\it dimension} of $V$ is $\dim(V)=d-\vert J\vert$. For each face $V_{(a_{j})_{j\in J}}$, we define a $\Z^{2d}$-system $(\wh{X}_{(a_{j})_{j\in J}},T_{1}^{0},\dots,T_{d}^{0},T_{1}^{1},\dots,T_{d}^{1})$ as the following. Let $\wh{X}_{(a_{j})_{j\in J}}=\wh{X}_{[d]\backslash J}$. Let $T_{i}^{\delta}=T_{i}$ if $V_{(a_{j})_{j\in J}}\subseteq\{\e\in V_{d}\colon \e_{i}=\delta\}$ and $T_{i}^{\delta}=\id$ otherwise.

 We write $V_{(a_{j})_{j\in J}}\leq V_{(a'_{j})_{j\in J'}}$ if $J'\subseteq J$ and $a_{j}=a'_{j}$ for all $j\in J'$. 
For all $V_{(a_{j})_{j\in J}}\leq V_{(a'_{j})_{j\in J'}}$, there is a natural (topological) factor map $\wh{X}_{(a'_{j})_{j\in J'}}\to \wh{X}_{(a_{j})_{j\in J}}$. All such factor maps induces a {\it diagram of factors} which is topological. This implies that the same diagram holds when considering any invariant measure on $\Q_{T_1,\ldots,T_d}(\widehat{X})$, so $\lambda$ has the non-degeneracy of common invariant factors (recall its meaning in the proof of Proposition \ref{Prop:ConstructionModel}).

An ergodic measure $\lambda_{d}$ in $(\Q_{T_1,\ldots,T_d}(\widehat{X}),\mathcal{G}_{T_1,\ldots,T_d})$ can be viewed as the joining of $\wh{X}_{(a_{j})_{j\in J}}$ with $\# J=d$.
Let $1<h\leq d$ and consider a joining $\lambda_{h}$ of all the systems $\wh{X}_{(a_{j})_{j\in J}}$ with $\# J=h$. We prove that $\lambda_{h}$ is relatively indepent over a joining $\lambda_{h-1}$ of $\wh{X}_{(a_{j})_{j\in J}}$ with $\# J=h-1$. Fix an arbitrary face $V_{(a_{j})_{j\in J}}$ with $\# J=h$ and 
 consider the system $(\wh{X}_{(a_{j})_{j\in J}},T_{1}^{a_{1}},\dots,T_{d}^{a_{d}})$ (for $j\notin J$, we pick an arbitrary $a_{j}\in\{0,1\}$ freely and fix it).
All these transformations leave $\lambda$ invariant, and the projection of $T_{j}^{a_{j}}$ acts as $T_{j}$ for all $j\in J$. On the other hand, for any other $\wh{X}_{(a'_{j})_{j\in J'}}$ with $\# J'=h$, at lease one of $T_{j}^{a_{j}}$, $j\in J$ act trivially. To see this, if $J'\neq J$, then $T_{j}^{a_{j}}=\id$ on  $\wh{X}_{(a'_{j})_{j\in J'}}$ for all $j\in J\backslash J'$; if $J'=J$, then $T_{j}^{a_{j}}=\id$ on  $\wh{X}_{(a'_{j})_{j\in J'}}$ for all $j\in J$ such that $a_{j}\neq a'_{j}$.
Hence, we can regard $(\prod_{\# J'=h} \wh{X}_{(a'_{j})_{j\in J'}},\lambda_{h},T_{1}^{a_{1}},\dots,T_{d}^{a_{d}})$ as a joining of $(\wh{X}_{(a_{j})_{j\in J}},T_{1}^{a_{1}},\dots,T_{d}^{a_{d}})$ with many systems that can be put together in the idempotent class $\bigvee_{j\in J} Z^{\{j\}}$. 
By Lemma \ref{lem:SatednessFactors}, $\wh{X}_{(a_{j})_{j\in J}}=\wh{X}_{[d]\backslash J}$ is $\bigvee_{j\in J} Z^{\{j\}}$-sated and thus the $\wh{X}_{(a_{j})_{j\in J}}$ coordinate of $\lambda_{h}$ can be pushed to $\bigvee_{j\in J} Z^{\{j\}}(\wh{X}_{(a_{j})_{j\in J}})= \bigvee_{V_{(a'_{j})_{j\in J'}}\leq V_{(a_{j})_{j\in J}},\# J'=h-1} \wh{X}_{(a'_{j})_{j\in J'}}$. We can argue similarly for all the other systems $\wh{X}_{(a_{j})_{j\in J}}$ with $\# J=h$ with a different choice of d transformations $T_{1}^{a_{1}},\dots,T_{d}^{a_{d}}$. Now viewing the $\Z^{2d}$ action as a whole, we conclude that $\lambda_{h}$ is relatively independent over a joining $\lambda_{h-1}$ of $\wh{X}_{(a_{j})_{j\in J}},\# J=h-1$.  Note that the non-degeneracy of common invariant factors ensures there are no multiple copies of $\wh{X}_{(a_{j})_{j\in J}}$ in the joining $\lambda_{h-1}$.  

Continuing this process, we conclude that $\lambda_{d}$ is relatively independent over a joining $\lambda_{1}$ of $\wh{X}_{(a_{j})_{j\in J}},\# J=1$. Note that there are 2d such factors, each of the form $\wh{X}_{i}^{\e}\coloneqq\wh{X}_{(a_{j}=\e)_{j\in\{i\}}}$ for some $\e\in\{0,1\}$ and $1\leq i\leq d$. Note that $T_{i'}^{\e'}$ acts as $T_{i}$ on $\wh{X}_{i}^{\e}$ if $i=i',\e=\e'$, and acts as the identity map otherwise. This implies that the joining $\lambda_{1}$ is a product system. The unique ergodicity of $(\wh{X}_{\{i\}},T_{i})$, implies that $\lambda_{1}$ is their product measure. Therefore $\lambda_{2},\lambda_{3},\dots,\lambda_{d}$ are all uniquely determined, which finishes the proof. 
\end{proof}

\subsection{Applications to pointwise results}\label{Sec:app1}
In this section, we use the topological model built in Section \ref{Sec:BuildTopModel} to derive Theorem \ref{THM:cubicpointwise}. Our strategy is as follows: we first prove the pointwise convergence of a slightly bigger average which involve both diagonal and face transformations (Theorem \ref{dexample}).  We then prove Theorem \ref{THM:cubicpointwise} when all functions involved are continuous arising from the suitable topological model. To do so, we need to apply several inequalities in order to get rid of the diagonal transformations on $(\Q_{T_1,\ldots,T_d}(\wh{X}),\mathcal{G}_{T_1,\ldots,T_d})$. Finally, by a density argument, we prove Theorem \ref{THM:cubicpointwise} for all measurable bounded functions.

We start with some notation.
For $0\leq k\leq d$, denote $E_{k}=\{\e\in V_d \colon \vert\e\vert\leq k\}$, $D_{k}=\{\e\in V_d\colon \vert\e\vert =k\}$. For $\e\in V_{d}$, let  $I_{\e}=\{i\in\{1,\dots,d\}\colon \e_{i}=1\}$. 
For $\sigma\in D_{k}$, $f_{\e}\in L^{\infty}(\mu),\e\leq\sigma$, denote
$$I(\{f_{\e}\}_{\e\leq\sigma})\coloneqq\int_{X^{[k]}} \bigotimes_{\e\leq\sigma}f_{\e}d\mu_{\sigma}.$$

\begin{thm}\label{dexample}
 Let $d\in\N$ and $(X,\mu,T_{1},\dots,T_{d})$ be an ergodic system with commuting transformations. 
Let $f_{\e}\in L^{\infty}(\mu)$ for $\e\in\{0,1\}^{d}$. Then
 $$ \lim_{N\to\infty}\frac{1}{N^{2d}} \sum_{m_{1},\dots,m_{d}=0}^{N-1}\sum_{n_{1},\dots,n_{d}=0}^{N-1}\prod_{\e\in\{0,1\}^{d}}f_{\e}\Bigl(\prod_{i=1}^{d}T_{i}^{m_{i}+n_{i}\cdot\e_{i}}x\Bigr)$$
converges for $\mu$-a.e. $x\in X$. Moreover, if $X$ is $Z$-sated, this limit is $$I(\{f_{\e}\}_{\e\leq(1\dots 1)})=\int_{X^{[d]}} \bigotimes_{\e\in\{0,1\}^{d}}f_{\e}d\mu_{T_{1},\dots,T_{d}}.$$
\end{thm}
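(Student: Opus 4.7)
\textbf{Proof proposal for Theorem \ref{dexample}.} The plan has three stages: reduce to a $Z$-sated strictly ergodic topological model, recognize the average as a Birkhoff sum on $\Q_{T_1,\ldots,T_d}$, and pass from continuous to bounded measurable functions.

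\textbf{Reduction.} Pointwise convergence descends through factor maps: if the average on an ergodic extension $\pi\colon(X',\mu',T_1,\ldots,T_d)\to(X,\mu,T_1,\ldots,T_d)$ computed with $f_\epsilon\circ\pi$ converges $\mu'$-a.e., then the original average converges $\mu$-a.e. By Theorem \ref{Thm:ext} applied to the countable family of idempotent classes in the definition of $Z$-satedness, we may assume that $X$ itself is ergodic and $Z$-sated. After an auxiliary extension ensuring freeness of each invariant factor, Theorem \ref{Thm:StrictlyModel} gives a strictly ergodic topological model of $X$ in which $(\Q_{T_1,\ldots,T_d}(X),\mathcal{G}_{T_1,\ldots,T_d})$ is strictly ergodic. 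We work in this model.

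\textbf{The average as a Birkhoff sum.} Let $\Delta(x)=(x,\ldots,x)\in X^{[d]}$ and $F=\bigotimes_{\epsilon\in V_d} f_\epsilon$. Since $(\mathcal{F}_i^1)^{n_i}$ applies $T_i^{n_i}$ exactly on the coordinates with $\epsilon_i=1$, the $\epsilon$-th coordinate of $\prod_i (T_i^{[d]})^{m_i}\prod_i (\mathcal{F}_i^1)^{n_i}\Delta(x)$ is exactly $\prod_i T_i^{m_i+n_i\epsilon_i}x$. The average therefore equals
\[\frac{1}{N^{2d}}\sum_{\vec{m},\vec{n}\in[0,N-1]^d} F\Bigl(\prod_{i=1}^d (T_i^{[d]})^{m_i}\prod_{i=1}^d (\mathcal{F}_i^1)^{n_i}\Delta(x)\Bigr).\]
Since $\mathcal{F}_i^0=T_i^{[d]}(\mathcal{F}_i^1)^{-1}$, the transformations $\{T_i^{[d]},\mathcal{F}_i^1\}_{i=1}^d$ generate $\mathcal{G}_{T_1,\ldots,T_d}$, and the sum ranges over a F\o lner sequence in $\Z^{2d}$.

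\textbf{Continuous case.} If every $f_\epsilon$ is continuous, then $F$ is continuous on $\Q_{T_1,\ldots,T_d}(X)$. By strict ergodicity and Oxtoby's theorem, the Birkhoff averages converge uniformly, hence at every starting point $\Delta(x)$, to $\int F\, d\mu_{T_1,\ldots,T_d}=I(\{f_\epsilon\}_{\epsilon\leq(1\cdots 1)})$.

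\textbf{Passage to $L^\infty$.} Given $f_\epsilon\in L^\infty(\mu)$ and $\delta>0$, pick continuous $g_\epsilon$ with $\|g_\epsilon\|_\infty\leq\|f_\epsilon\|_\infty$ and $\|f_\epsilon-g_\epsilon\|_{L^1(\mu)}<\delta$. Telescoping $\bigotimes f_\epsilon-\bigotimes g_\epsilon$ along $V_d$, the difference of the two $N$-averages is bounded by a finite sum over $\eta\in V_d$ of terms of the form
\[C_\eta\cdot\frac{1}{N^{2d}}\sum_{\vec{m},\vec{n}\in[0,N-1]^d}\bigl|f_\eta-g_\eta\bigr|\Bigl(\prod_{i=1}^d T_i^{m_i+n_i\eta_i}x\Bigr),\]
where $C_\eta$ depends only on the $L^\infty$-norms. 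For fixed $\eta$, a change of variables turns this into a weighted F\o lner average of $|f_\eta-g_\eta|$ on the ergodic $\Z^d$-system $(X,\mu,T_1,\ldots,T_d)$ (the $n_i$ sum is trivial when $\eta_i=0$ and produces a triangular Cesàro weight when $\eta_i=1$), so by the Wiener ergodic theorem for $\Z^d$-actions it converges $\mu$-a.e. to $\int|f_\eta-g_\eta|\,d\mu<\delta$. Combining this error control with the pointwise convergence from the continuous case and letting $\delta\to 0$ gives the pointwise convergence of the original average to $I(\{f_\epsilon\}_{\epsilon\leq(1\cdots 1)})$.

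\textbf{Main obstacle.} The delicate point is that $\Delta(x)$ usually lies in a $\mu_{T_1,\ldots,T_d}$-null set, so the Birkhoff theorem on $(\Q_{T_1,\ldots,T_d}(X),\mu_{T_1,\ldots,T_d})$ alone cannot be evaluated at $\Delta(x)$; this is why strict ergodicity of the cube structure is indispensable in the continuous case, and why the auxiliary Wiener-type estimate on $X$ itself, rather than on $\Q_{T_1,\ldots,T_d}(X)$, is what enables the upgrade from continuous to $L^\infty$ observables.
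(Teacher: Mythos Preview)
Your proposal is correct and follows essentially the same route as the paper: pass to a $Z$-sated ergodic extension, invoke Theorem~\ref{Thm:StrictlyModel} to obtain a strictly ergodic model for which $(\Q_{T_1,\ldots,T_d}(X),\mathcal{G}_{T_1,\ldots,T_d})$ is strictly ergodic, use unique ergodicity to handle continuous $f_\epsilon$, and then telescope to pass to $L^\infty$. The paper phrases the continuous step without naming Oxtoby and handles the telescoped error terms by a bare appeal to ``Birkhoff'' where you invoke the Wiener ergodic theorem for the triangular Ces\`aro weights; your formulation is more explicit but the content is the same (indeed, once the ordinary $\Z^d$-Birkhoff averages converge at $x$, an elementary Abel-summation argument shows the triangularly weighted averages converge to the same limit, so the full Wiener theorem is not strictly needed).
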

\begin{proof}
Since it suffices to prove the convergence of this average in an extension system of $X$, we may assume that $(X,\mu,T_{1},\dots,T_{d})$ is a free ergodic $Z$-sated system. By Theorem  \ref{Thm:StrictlyModel}, we may take a strictly topological model $(\wh{X},\wh{T}_{1},\dots,\wh{T}_{d})$ for $X$ such that $({\bf Q}_{\wh{T}_{1},\dots,\wh{T}_{d}}(\wh{X}),\mathcal{G}_{\wh{T}_{1},\dots,\wh{T}_{d}})$ is strictly ergodic. It now suffices to work on $(\wh{X},\wh{\mu},\wh{T}_{1},\dots,\wh{T}_{d})$ instead of $(X,\mu,T_{1},\dots,T_{d})$, where $\wh{\mu}$ is the unique ergodic measure on $\wh{X}$.

Let $f_{\e}\in L^{\infty}(\wh{\mu}),\e\in\{0,1\}^{d}$ and fix $\d>0$. Let 
$\wh{f}_{\e}\in L^{\infty}(\wh{\mu}),\e\in\{0,1\}^{d}$ be continuous functions on $X$ such that $\|f_{\e}-\wh{f}_{\e}\|_{1}<\d$ for $\e\in\{0,1\}^{d}$. 
We may assume that all functions are bounded by 1 in the $L^{\infty}$ norm. For simplicity, denote 
$$\E_{N}(\{h_{\e}\}_{\e\leq(1\dots 1)})(x)=\frac{1}{N^{2d}} \sum_{m_{1},\dots,m_{d}=0}^{N-1}\sum_{n_{1},\dots,n_{d}=0}^{N-1}\prod_{\e\in\{0,1\}^{d}}f_{\e}\Bigl(\prod_{i=1}^{d}\wh{T}_{i}^{m_{i}+n_{i}\cdot\e_{i}}x\Bigr)$$
for $x\in \wh{X}, h_{\e}\in L^{\infty}(\wh{\mu})$.
By the telescoping inequality, we have

\begin{align*}
&\quad \Big\vert \mathbb{E}_N(\{f_{\e}\}_{\e\leq(1\dots 1)})(x) - I(\{f_{\e}\}_{\e\leq(1\dots 1)})   \Big\vert   \\
& \leq \Big\vert \mathbb{E}_N(\{f_{\e}\}_{\e\leq(1\dots 1)})(x) - \mathbb{E}_N(\{\wh{f}_{\e}\}_{\e\leq(1\dots 1)})(x) \Big\vert  
 + \Big\vert\mathbb{E}_N(\{\wh{f}_{\e}\}_{\e\leq(1\dots 1)})(x)- I(\{f_{\e}\}_{\e\leq(1\dots 1)}) \Big\vert \\
& \leq \sum_{\e\in\{0,1\}^{d}}\frac{1}{N^{2d}}\sum_{m_{1},\dots,m_{d}=0}^{N-1}\sum_{n_{1},\dots,n_{d}=0}^{N-1}
    \Big\vert f_{\e}(\prod_{i=1}^{d}\wh{T}_{i}^{m_{i}+n_{i}\cdot\e_{i}}x)-\widehat{f}_{\e}(\prod_{i=1}^{d}\wh{T}_{i}^{m_{i}+n_{i}\cdot\e_{i}}x)\Big\vert     \\
& + \Big\vert\mathbb{E}_N(\{\wh{f}_{\e}\}_{\e\leq(1\dots 1)})(x)- I(\{\wh{f}_{\e}\}_{\e\leq(1\dots 1)}) \Big\vert 
+ \Big\vert I(\{f_{\e}\}_{\e\leq(1\dots 1)})- I(\{\wh{f}_{\e}\}_{\e\leq(1\dots 1)})\Big\vert.
\end{align*}
 Since $(\Q_{\wh{T}_{1},\dots,\wh{T}_{d}}({\wh{X}}), \mathcal{G}_{\wh{T}_{1},\dots,\wh{T}_{d}})$ is uniquely ergodic, we have that 
\[\left |\mathbb{E}_N(\{\wh{f}_{\e}\}_{\e\leq(1\dots 1)})(x)- I(\{\wh{f}_{\e}\}_{\e\leq(1\dots 1)}) \right |\] 
converges to 0 for every $x\in \wh{X}$ as $N\to\infty$.
On the other hand, by Birkhoff Ergodic theorem, the first term of the last inequality converge a.e. to 
$$\sum_{\e\in\{0,1\}^{d}}\|f_{\e}-\wh{f}_{\e}\|_1,$$ which is at most $2^{d}\d$.
Finally, by the telescoping inequality and the fact that the marginals of $\mu_{T_{1},\dots,T_{d}}$ are equal to $\mu$,
 we deduce that $$\left |I(\{f_{\e}\}_{\e\in\{0,1\}^{d}})- I(\{\wh{f}_{\e}\}_{\e\leq(1\dots 1)}) \right |\leq \sum_{\e\in\{0,1\}^{d}}\|f_{\e}-\wh{f}_{\e}\|_1\leq 2^{d}\d.$$

Therefore, we can find $N$ large enough and a subset $X_N\subset X$ with measure larger than $1-\d$ such that for every $x\in X_N$,
$$\Big\vert \mathbb{E}_N(\{f_{\e}\}_{\e\in\leq(1\dots 1)})(x) - I(\{f_{\e}\}_{\e\leq(1\dots 1)})  \Big\vert \leq 10\cdot 2^{d}\d. $$

Since $\d$ is arbitrary, we conclude that $\mathbb{E}_N(\{f_{\e}\}_{\e\leq(1\dots 1)})$ converges to $I(\{f_{\e}\}_{\e\leq(1\dots 1)})$ a.e.
as $N\to\infty$.
\end{proof}

Let $(X,\mu,T_{1},\dots,T_{d})$ be a measure preserving system with commuting transformations. 
For all $1\leq k\leq d, \sigma\in D_{k}, f\in L^{\infty}(\mu)$ and $x\in X$, denote
\[S_{\sigma,N}(f,x)\coloneqq \frac{1}{N^{2k}} \sum_{ { 0\leq m_{i}\leq N-1 \atop i\in I_{\sigma}}}\sum_{-m_{i}\leq n_{i}\leq N-1-m_{i}}\prod_{\e\leq \sigma}f\Bigl(\prod_{i\in I_{\sigma}}T_{i}^{m_{i}+n_{i}\cdot\e_{i}}x\Bigr).\]

Specially \[S_{\vec{1},N}(f,x)\coloneqq \frac{1}{N^{2d}} \sum_{ 0\leq m_{i}\leq N-1}\sum_{-m_{i}\leq n_{i}\leq N-1-m_{i}}\prod_{\e\in\{0,1\}^{d}}f\Bigl(\prod_{i=1}^{d}T_{i}^{m_{i}+n_{i}\cdot\e_{i}}x\Bigr)\]

The topological model constructed in the proof of Theorem \ref{dexample} also allows us to show:

\begin{lem}
	Let $(\wh{X},\wh{\mu},\wh{T}_{1},\dots,\wh{T}_{d})$ be the topological model constructed in the proof of Theorem \ref{dexample} and let $f$ be a continuous function on $\wh{X}$.
	Then
	$\lim_{N\to\infty}S_{\vec{1},N}(f,x)$
	converges to $\normm{f}_{\mu,T_{1},\dots,T_{d}}^{2^{d}}$ for all $x\in \wh{X}$.
\end{lem}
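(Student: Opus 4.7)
The plan is to reinterpret $S_{\vec 1, N}(f, x)$ as a Birkhoff-type average of the continuous function $F = \bigotimes_{\e \in V_d} f$ along the $\mathcal{G}_{\wh T_1, \ldots, \wh T_d}$-orbit of the diagonal point $\Delta(x) = (x, \ldots, x)$ in the strictly ergodic cube system $(\Q_{\wh T_1, \ldots, \wh T_d}(\wh X), \mathcal{G}_{\wh T_1, \ldots, \wh T_d})$ supplied by Theorem \ref{Thm:StrictlyModel}, and then to identify the unique invariant measure with the Host measure $\mu_{T_1, \ldots, T_d}$.

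First, I would perform the change of variables $k_i = m_i + n_i$. As $(m_i) \in [0, N-1]^d$ and each $n_i \in [-m_i, N-1-m_i]$, the pairs $(m_i, k_i)$ range independently over $[0, N-1]^{2d}$, and $m_i + n_i \e_i$ equals $m_i$ when $\e_i = 0$ and $k_i$ when $\e_i = 1$. Hence
\[
S_{\vec 1, N}(f, x) = \frac{1}{N^{2d}} \sum_{(m_i), (k_i) \in [0, N-1]^d} \prod_{\e \in V_d} f\Bigl( \prod_{i : \e_i = 0} \wh T_i^{m_i} \prod_{i : \e_i = 1} \wh T_i^{k_i} x \Bigr).
\]
The argument of $f$ at coordinate $\e$ is exactly the $\e$-coordinate of $g_{m, k} \cdot \Delta(x)$, where $g_{m, k} = \prod_i (\mathcal{F}_i^0)^{m_i} (\mathcal{F}_i^1)^{k_i} \in \mathcal{G}_{\wh T_1, \ldots, \wh T_d}$. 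Since $\Delta(x) \in \Q_{\wh T_1, \ldots, \wh T_d}(\wh X)$ (take all cube parameters zero in the definition), the entire orbit stays in the cube space, and $S_{\vec 1, N}(f, x)$ becomes the average of the continuous function $F$ over the F\o lner box $[0, N-1]^{2d}$ of the $\mathbb Z^{2d}$-action on $\Q_{\wh T_1, \ldots, \wh T_d}(\wh X)$ generated by the face transformations, evaluated at $\Delta(x)$.

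By Theorem \ref{Thm:StrictlyModel}, $(\Q_{\wh T_1, \ldots, \wh T_d}(\wh X), \mathcal{G}_{\wh T_1, \ldots, \wh T_d})$ is uniquely ergodic with some invariant probability measure $\lambda$, and the standard uniform convergence theorem for continuous functions under uniquely ergodic amenable actions along F\o lner sequences gives $\lim_N S_{\vec 1, N}(f, x) = \int F \, d\lambda$ for every $x \in \wh X$. To conclude, I would identify $\lambda$ with $\mu_{T_1, \ldots, T_d}$: by its inductive construction in Section \ref{Sec:HostMeasures}, the Host measure is invariant under each $\mathcal{F}_i^0$ and $\mathcal{F}_i^1$, and its support lies in $\Q_{\wh T_1, \ldots, \wh T_d}(\wh X)$ (a classical fact obtained by unfolding the defining conditional expectations into averages of face transformations acting on the diagonal). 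Unique ergodicity then forces $\lambda = \mu_{T_1, \ldots, T_d}$, and by the definition of the Host seminorm $\int F \, d\lambda = \normm{f}_{\mu, T_1, \ldots, T_d}^{2^d}$. The main subtlety is this final identification, namely the containment of the support of $\mu_{T_1, \ldots, T_d}$ in the cube space; this is classical in the theory of cubic structures, but I would include a short direct argument or a precise citation for completeness.
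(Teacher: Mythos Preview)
Your proof is correct and follows essentially the same route as the paper: both arguments reduce to unique ergodicity of $(\Q_{\wh T_1,\ldots,\wh T_d}(\wh X),\mathcal{G}_{\wh T_1,\ldots,\wh T_d})$ and the identification of its unique invariant measure with the Host measure. The only real difference is packaging: your change of variables $k_i=m_i+n_i$ turns $S_{\vec 1,N}(f,x)$ into a genuine F\o lner-box average of $F=\bigotimes_\e f$ at the diagonal point, so the standard uniform convergence theorem for uniquely ergodic $\mathbb Z^{2d}$-actions applies directly; the paper instead argues by contradiction, takes weak$^*$ limits of the empirical measures in the original $(m_i,n_i)$ parametrization, and checks invariance under each $\mathcal F_k$ and $T_i^{[d]}$ by hand via $O(1/N)$ boundary estimates. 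Your route is cleaner and avoids that computation. The subtlety you flag---that $\mu_{T_1,\ldots,T_d}$ is supported on $\Q_{\wh T_1,\ldots,\wh T_d}(\wh X)$---is also needed (and used without comment) in the paper, both here and already in the proof of Theorem~\ref{dexample}; including the short argument you sketch would be a genuine improvement.
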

\begin{proof}
Suppose that for some $x\in \wh{X}$ the average does not converge to $\normm{f}_{\mu,T_1,\ldots,T_d}^{2^{d}}$. Then there exist  an increasing sequences $N_j\to \infty$ and $\d>0$ such that $S_{\vec{1},N_{j}}(f,x)$
differs from $\normm{f}_{\mu,,T_1,\ldots,T_d}^{2^{d}}$ by at least $\d$ for  every $j\in\N$. 

Let $\lambda$ be any weak$^{\ast}$-limit of the sequence
\begin{equation}\label{eq:limitmeasure}
\lambda_{j}:=\frac{1}{N_{j}^{2d}} \sum_{0\leq m_{i}\leq N_{j}-1}\sum_{-m_{i}\leq n_{i}\leq N_{j}-1-m_{i}}\bigotimes_{\e\in\{0,1\}^{d}}\Bigl(\prod_{i=1}^{d}\wh{T}_{i}^{m_{i}+n_{i}\cdot\e_{i}}\Bigr)\d_{x^{[d]}}..
\end{equation}

Let $F$ be a continuous function on $\Q_{\wh{T}_1,\ldots,\wh{T}_d}(\wh{X})$. We claim that $\int F\circ \mathcal{F}_{k} \lambda- \int F\lambda =0$. 
Note that the difference between { \[ \sum_{-m_{i}\leq n_{i}\leq N_{j}-1-m_{i}}F\Bigl(\Bigl(\prod_{i=1}^{d}\wh{T}_{i}^{m_{i}+n_{i}\cdot\e_{i}}x\Bigr)_{\e\in V_d}\Bigr) =\sum_{-m_{k}\leq n_{k}\leq N_{k}-1-m_{k}} \sum_{-m_{i}\leq n_{i}\leq N_{j}-1-m_{i}\atop i\neq k}F\Bigl(\Bigl(\prod_{i=1}^{d}\wh{T}_{i}^{m_{i}+n_{i}\cdot\e_{i}}x\Bigr)_{\e\in V_d}\Bigr)\] }and 

\begin{align*} & \sum_{-m_{i}\leq n_{i}\leq N_{j}-1-m_{i}}F\Bigl(\Bigl(\prod_{i=1\atop i\neq k}^{d}\wh{T}_{i}^{m_{i}+n_{i}\cdot\e_{i}}\wh{T}_{k}^{(n_k+1)\cdot \e_i }x\Bigr)_{\e\in V_d}\Bigr)\\
= & \sum_{-m_{k}+1\leq n_{k}\leq N_{k}-m_{k}} \sum_{-m_{i}\leq n_{i}\leq N_{j}-1-m_{i}\atop i\neq k}F\Bigl(\Bigl(\prod_{i=1}^{d}\wh{T}_{i}^{m_{i}+n_{i}\cdot\e_{i}}x\Bigr)_{\e\in V_d}\Bigr)
\end{align*}

is less than $2N_j^{d-1}\|F\|_{\infty}$. 
So
\[ \left \vert \int F d\lambda_j - \int F\circ \mathcal{F}_{k}d\lambda_j \right \vert \leq  \frac{2\|F\|_{\infty}}{N_j}, \]
which implies that   $\int F\circ \mathcal{F}_{k} d\lambda= \int Fd\lambda$ as claimed. 

A similar but tedious computation shows that $\int F\circ \wh{T}_i^{[d]} d\lambda= \int Fd\lambda $ for all continuous function $F$ on $\Q_{\wh{T}_1,\ldots,\wh{T}_d}(\wh{X})$ and $i=1\ldots, d$. We omit the detail. 

Thus any weak limit of the sequence \eqref{eq:limitmeasure} is invariant under $\mathcal{G}_{\wh{T}_{1},\dots,\wh{T}_{k}}$ and therefore it equals to $\mu_{T_1,\dots,T_{{k}}}$ by unique ergodicity. This means that \eqref{eq:limitmeasure} converges to $\mu_{T_{{1}},\dots,T_{{k}}}$ and hence \[\frac{1}{N_{j}^{2d}} \sum_{ 0\leq m_{i}\leq N_{j}-1}\sum_{-m_{i}\leq n_{i}\leq N_{j}-1-m_{i}}\prod_{\e\in\{0,1\}^{d}}f\Bigl(\prod_{i=1}^{d}T_{i}^{m_{i}+n_{i}\cdot\e_{i}}x\Bigr)\]
converges to $\int \otimes f d\mu_{T_1,\ldots,T_{d}}=\normm{f}_{\mu,T_1,\ldots,T_d}^{2^{d}}$ as $j\to\infty$, a contradiction.
\end{proof}

By a density argument, we have 

\begin{lem}\label{ConvergenceContinuous}
	Let $(X,\mu,T_{1},\dots,T_{d})$ be an ergodic $Z$-sated measure preserving system with commuting transformations and $f\in L^{\infty}(\mu)$. 
	Then
	$\lim_{N\to\infty}S_{\vec{1},N}(f,x)$
	converges to $\normm{f}_{\mu,T_{1},\dots,T_{d}}^{2^{d}}$ for  $\mu$-a.e. $x\in X$.
\end{lem}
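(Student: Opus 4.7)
The plan is to pass to the strictly ergodic topological model $(\widehat{X},\widehat{T}_{1},\dots,\widehat{T}_{d})$ furnished by Theorem \ref{Thm:StrictlyModel}, where the previous lemma already gives pointwise convergence of $S_{\vec{1},N}(f,x)$ to $\normm{f}_{\mu,T_{1},\dots,T_{d}}^{2^{d}}$ for every $x\in\widehat{X}$, provided $f$ is continuous. The whole task is then to upgrade this from $C(\widehat{X})$ to $L^{\infty}(\widehat{\mu})$ by a density argument. Since the measure preserving system is isomorphic to its topological model, any a.e.\ convergence transfers back to $(X,\mu)$.

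Fix $f\in L^{\infty}(\widehat{\mu})$, which we may assume is bounded by $1$, and choose a sequence of continuous functions $\widehat{f}_{n}\in C(\widehat{X})$, each bounded by $1$, with $\|f-\widehat{f}_{n}\|_{L^{1}(\widehat{\mu})}\to 0$. I would compare $S_{\vec{1},N}(f,x)$ and $S_{\vec{1},N}(\widehat{f}_{n},x)$ via the telescoping identity
\[
\Bigl|\prod_{\e\in V_{d}}f(y_{\e})-\prod_{\e\in V_{d}}\widehat{f}_{n}(y_{\e})\Bigr|\leq \sum_{\e_{0}\in V_{d}}\bigl|f-\widehat{f}_{n}\bigr|(y_{\e_{0}}),
\]
which gives
\[
\bigl|S_{\vec{1},N}(f,x)-S_{\vec{1},N}(\widehat{f}_{n},x)\bigr|\leq \sum_{\e_{0}\in V_{d}}\frac{1}{N^{2d}}\sum_{m,n}\bigl|f-\widehat{f}_{n}\bigr|\Bigl(\prod_{i=1}^{d}\widehat{T}_{i}^{m_{i}+n_{i}\e_{0,i}}x\Bigr).
\]
For each fixed $\e_{0}$, a change of variables $a_{i}=m_{i}+n_{i}\e_{0,i}$ shows that each point $\prod_{i}\widehat{T}_{i}^{a_{i}}x$ with $a\in[N]^{d}$ is hit exactly $N^{d}$ times inside the double sum (the extra $d$ coordinates $n_{i}$ with $\e_{0,i}=0$, respectively $m_{i}$ with $\e_{0,i}=1$, serve as free parameters). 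Hence the bound reduces to $\frac{1}{N^{d}}\sum_{a\in[N]^{d}}|f-\widehat{f}_{n}|\bigl(\prod_{i}\widehat{T}_{i}^{a_{i}}x\bigr)$, which by the $\Z^{d}$-Birkhoff ergodic theorem (using ergodicity of the $\Z^{d}$-action) converges $\widehat{\mu}$-a.e.\ to $\|f-\widehat{f}_{n}\|_{L^{1}}$.

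To close the argument I would also bound the limits. Applying the same telescoping on $X^{[d]}$ together with the fact that the one-dimensional marginals of $\mu_{T_{1},\dots,T_{d}}$ coincide with $\mu$ gives
\[
\bigl|\normm{f}_{\mu,T_{1},\dots,T_{d}}^{2^{d}}-\normm{\widehat{f}_{n}}_{\mu,T_{1},\dots,T_{d}}^{2^{d}}\bigr|\leq 2^{d}\|f-\widehat{f}_{n}\|_{L^{1}}.
\]
Now let $X_{0}\subseteq\widehat{X}$ be the full-measure set on which the $\Z^{d}$-Birkhoff theorem converges for every $|f-\widehat{f}_{n}|$ and every $\e_{0}$ simultaneously (a countable intersection). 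For $x\in X_{0}$, the previous lemma gives $S_{\vec{1},N}(\widehat{f}_{n},x)\to\normm{\widehat{f}_{n}}^{2^{d}}$ for every $n$, and the triangle inequality together with the two bounds above yields
\[
\limsup_{N\to\infty}\bigl|S_{\vec{1},N}(f,x)-\normm{f}_{\mu,T_{1},\dots,T_{d}}^{2^{d}}\bigr|\leq 2\cdot 2^{d}\|f-\widehat{f}_{n}\|_{L^{1}}.
\]
Letting $n\to\infty$ finishes the proof.

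The only delicate point is the combinatorial verification that the $\e_{0}$-indexed sub-average really reduces to a pure $\Z^{d}$-Birkhoff average on $X$: one must check that the multiplicities are uniform, which is why the domain of summation for $n_{i}$ was chosen to be $\{-m_{i},\dots,N-1-m_{i}\}$ in the definition of $S_{\vec{1},N}$. Everything else is routine telescoping and $L^{1}$-approximation.
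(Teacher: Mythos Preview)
Your proof is correct and is precisely the density argument the paper has in mind; the paper states the lemma simply as ``by a density argument'' from the preceding lemma, and your write-up fills in exactly those details (the same telescoping and $L^1$-approximation already appear in the proof of Theorem \ref{dexample}). Your observation that the shifted summation range $-m_i\leq n_i\leq N-1-m_i$ forces uniform multiplicities, so that each $\e_0$-term collapses to a genuine $\Z^d$-Birkhoff average, is the right way to see why the comparison step works cleanly.
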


We deduce

\begin{prop} \label{prop:convergenceseminorm}
Let $(X,\mu,T_{1},\dots,T_{d})$ be an ergodic measure preserving system with commuting transformations and $f\in L^{\infty}(\mu)$. 
	Then
	$\lim_{N\to\infty}S_{\vec{1},N}(f,x)$
	converges to $\normm{f}_{\mu,T_{1},\dots,T_{d}}^{2^{d}}$ for  $\mu$-a.e. $x\in X$.
\end{prop}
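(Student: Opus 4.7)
The plan is to deduce Proposition \ref{prop:convergenceseminorm} from Lemma \ref{ConvergenceContinuous} by passing to a $Z$-sated ergodic extension, a reduction made possible by Theorem \ref{Thm:ext}. First I would check that the idempotent classes featured in the definition of $Z$-satedness, namely $\bigvee_{i\notin J} Z^{J\cup\{i\}}$ and $\bigvee_{i\in J} Z^{\{i\}}$ for $J\subseteq [d]$, form a finite collection, and each one is of the form $\bigvee_k Z^{\Gamma_k}$ for subgroups $\Gamma_k\subseteq G$. Thus the hypothesis of Theorem \ref{Thm:ext} is satisfied, and we obtain an ergodic extension $\pi\colon (X',\mu',T_1,\dots,T_d)\to (X,\mu,T_1,\dots,T_d)$ which is $Z$-sated.

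Next I would lift the problem to $X'$. Applying Lemma \ref{ConvergenceContinuous} to $f\circ\pi\in L^{\infty}(\mu')$ yields
\[ \lim_{N\to\infty} S_{\vec{1},N}(f\circ\pi, x') = \normm{f\circ\pi}_{\mu',T_1,\dots,T_d}^{2^d} \]
for $\mu'$-a.e.\ $x'\in X'$. Property (5) of Theorem \ref{ine} gives $\normm{f\circ\pi}_{\mu',T_1,\dots,T_d}=\normm{f}_{\mu,T_1,\dots,T_d}$. Since $\pi$ intertwines each $T_i$ upstairs with $T_i$ downstairs, plugging into the defining formula for $S_{\vec{1},N}$ produces the pointwise identity
\[ S_{\vec{1},N}(f\circ\pi, x') = S_{\vec{1},N}(f, \pi(x')) \]
valid for every $x'\in X'$ and every $N$.

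To descend back to $X$, let $A\subseteq X$ denote the set of $x$ for which $S_{\vec{1},N}(f,x)$ fails to converge to $\normm{f}_{\mu,T_1,\dots,T_d}^{2^d}$. The identification above shows that $\pi^{-1}(A)$ is precisely the exceptional $\mu'$-null set provided by Lemma \ref{ConvergenceContinuous} for $f\circ\pi$. Because $\pi_{\ast}\mu'=\mu$, we conclude $\mu(A)=\mu'(\pi^{-1}(A))=0$, which is the claim.

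I do not anticipate a serious obstacle: all the delicate construction (topological model, unique ergodicity of the cubic structure, density from continuous to $L^{\infty}$ functions) is already packaged inside Lemma \ref{ConvergenceContinuous}, and extensions behave transparently with respect to the averages $S_{\vec{1},N}$. The only point deserving care is the verification that the finite collection of idempotent classes needed for $Z$-satedness is of the type to which the strengthened satedness result Theorem \ref{Thm:ext} applies, which is immediate from their definition.
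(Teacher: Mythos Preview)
Your proposal is correct and follows essentially the same route as the paper: pass to an ergodic $Z$-sated extension via Theorem \ref{Thm:ext}, apply Lemma \ref{ConvergenceContinuous} to $f\circ\pi$, use Theorem \ref{ine}(5) to identify the seminorms, and descend via $\pi_\ast\mu'=\mu$. The paper's proof is terser and also mentions freeness of the extension (harmless to add, though not needed given the stated hypotheses of Lemma \ref{ConvergenceContinuous}), but the argument is the same.
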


\begin{proof}
We can find an $Z$-sated ergodic and free extension $(X',\mu',T_{1}',\dots,T_{k}')$ of $(X,\mu,T_1,\ldots,T_d)$ with a factor map $\pi$. By Lemma \ref{ConvergenceContinuous}, $\lim_{N\to\infty}S_{\vec{1},N}(f\circ \pi,x')$ converges to $\normm{f\circ \pi}_{\mu',T_{1}',\dots,T_{d}'}^{2^{d}}$ for  $\mu'$-a.e. $x'\in X$. Since $\normm{f\circ \pi}_{\mu',T_{1}',\dots,T_{d}'}^{2^{d}}=\normm{f}_{\mu,T_1,\ldots,T_d}^{2^d}$ we get the result.
\end{proof}

We also need the following Van der Corput type estimate:

\begin{lem} \label{BoundAverage}
  Let $(X,\mu,T_{1},\dots,T_{d})$ be a measure preserving system with commuting transformations and $f_{\e}\in L^{\infty}(\mu), \e\in\{0,1\}^{d}$ be functions with $\Vert f_{\e}\Vert_{\infty}\leq 1$. Then for all $N\in\N$, $1\leq k\leq d$, $\sigma\in D_{k}$ and $x\in X$, we have that
  $$\Bigl(\frac{1}{N^{d}} \sum_{n_{1},\dots,n_{d}=0}^{N-1}\prod_{\e\in E_{k}}f_{\e}\Bigl(\prod_{i=1}^{d}T_{i}^{n_{i}\cdot\e_{i}}x\Bigr)\Bigr)^{2^{k}}\leq  S_{\sigma,N}(f_{\sigma},x).$$
  Particularly, $S_{\sigma,N}(f_{\sigma},x)\geq 0$.
\end{lem}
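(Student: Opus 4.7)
The plan is to prove this iteratively by applying Cauchy--Schwarz once in each variable $n_i$ with $i\in I_\sigma$, using $\|f_\e\|_\infty\le 1$ to discard the factors that do not get ``doubled'' along the way. Without loss of generality I would assume $I_\sigma=\{1,\dots,k\}$ and $\sigma=(1,\dots,1,0,\dots,0)$.

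For the first Cauchy--Schwarz step, I would split the product $\prod_{\e\in E_k}f_\e(\prod_i T_i^{n_i\e_i}x)$ into the subproduct over $\{\e\in E_k:\e_1=0\}$ and the subproduct over $\{\e\in E_k:\e_1=1\}$. The first subproduct does not depend on $n_1$, so it factors outside the $n_1$-average. Applying Cauchy--Schwarz in the outer $(n_2,\dots,n_d)$-average and estimating the $\e_1=0$ piece by $1$ via the $L^\infty$ bound, expanding the square in the $\e_1=1$ piece introduces a duplicate variable $n_1'$; reparametrizing by $m_1:=n_1\in[0,N-1]$ and $\tilde n_1:=n_1'-m_1\in[-m_1,N-1-m_1]$, each pair of factors combines into $\prod_{\delta_1\in\{0,1\}}f_\e(T_1^{m_1+\tilde n_1\delta_1}\prod_{i\ge 2}T_i^{n_i\e_i}x)$, which is exactly the shape appearing in $S_{\sigma,N}$.

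Iterating this Cauchy--Schwarz-and-reparametrize step for each $i=2,\dots,k$ doubles the exponent on the left-hand side and introduces a new pair of summation variables each time. After $k$ iterations, the surviving indices $\e\in E_k$ are those satisfying $\e_i=1$ for every $i\in I_\sigma$; combined with the constraint $|\e|\le k=|I_\sigma|$, this forces $\e=\sigma$, so only $f_\sigma$ remains. The leftover dummy sums over $n_{k+1},\dots,n_d\in[0,N-1]$ are then trivial and contribute a factor $N^{d-k}$ that cancels the corresponding $N^{-(d-k)}$ in the normalization, yielding exactly $S_{\sigma,N}(f_\sigma,x)$.

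For the non-negativity assertion, I would observe that $S_{\sigma,N}(f_\sigma,x)$ admits a square decomposition. Under the substitution $p_i:=m_i+n_i$ the variables $m_i,p_i$ range independently over $[0,N-1]$, and separating the cube product over $\e\le\sigma$ according to the value of $\e_k$ makes the sums over $m_k$ and $p_k$ decouple, yielding an average of squares
\[S_{\sigma,N}(f_\sigma,x)=\frac{1}{N^{2(k-1)}}\sum_{(m_i,p_i)_{i<k}}\Bigl(\frac{1}{N}\sum_{m_k=0}^{N-1}\prod_{\delta\in\{0,1\}^{k-1}}f_\sigma\Bigl(\prod_{i<k}T_i^{m_i(1-\delta_i)+p_i\delta_i}T_k^{m_k}x\Bigr)\Bigr)^2\ge 0.\]
The main obstacle is purely bookkeeping: at each Cauchy--Schwarz step one must verify that the product factors correctly according to $\e_i\in\{0,1\}$ and that the reindexed variables match the definition of $S_{\sigma,N}$ exactly. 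No deeper difficulty is anticipated.
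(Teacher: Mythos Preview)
Your proposal is correct and follows essentially the same route as the paper: iterated Cauchy--Schwarz in the variables $n_i$ for $i\in I_\sigma$, discarding the $\e_i=0$ factors via the $L^\infty$ bound and reparametrizing the squared $n_i$-average into the $(m_i,\tilde n_i)$ form. The only cosmetic differences are that the paper first proves the case $\sigma=(1,\dots,1)$ and then reduces the general $\sigma$ to it by freezing $n_{k+1},\dots,n_d$, and that the paper deduces $S_{\sigma,N}(f_\sigma,x)\ge 0$ directly from the main inequality (the left side being an even power) rather than via your explicit square decomposition.
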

\begin{proof}
	We first prove the case $\sigma=(1\dots 1)$. In other words, we show
	 \begin{equation}\label{tem3}
	 \begin{split}
	\Bigl(\frac{1}{N^{d}} \sum_{n_{1},\dots,n_{d}=0}^{N-1}\prod_{\e\in \{0,1\}^{d}}f_{\e}\Bigl(\prod_{i=1}^{d}T_{i}^{n_{i}\cdot\e_{i}}x\Bigr)\Bigr)^{2^{d}}\leq S_{(1\dots 1),N}(f_{(1\dots 1)},x).
	\end{split}
	 \end{equation}

	Fix $f_{\e}\in L^{\infty}(\mu), \e\in\{0,1\}^{d}$ with $\Vert f_{\e}\Vert_{\infty}\leq 1$. For $0\leq k\leq d$, denote
    \begin{equation}\nonumber
    \begin{split}
	& A_{k}=\Bigl(\frac{1}{N^{d-k}}\sum_{n_{1},\dots,n_{d-k}=0}^{N-1}\frac{1}{N^{2k}} \sum_{m_{d-k+1},\dots,m_{d}=0}^{N-1}\sum_{-m_{i}\leq n_{i}\leq N-1-m_{i}, d-k+1\leq i\leq  d}
	\\&\qquad
	\prod_{\e\in\{0,1\}^{d-k}}\prod_{\e'\in\{0,1\}^{k}}f^{\vert\e'\vert}_{\e 1\dots 1}\Bigl(\prod_{i=1}^{d-k}T_{i}^{n_{i}\cdot\e_{i}}\prod_{i=d-k+1}^{d}T_{i}^{m_{i}+n_{i}\cdot\e'_{i}}x\Bigr)\Bigr)^{2^{d-k}}.
	 \end{split}
\end{equation}
  It suffices to show that $A_{0}\leq A_{d}$. It then suffices to show that $A_{k}\leq A_{k+1}$ for all $0\leq k<d$.
  
  Fix $0\leq k<d$.	
  We separate all functions $f_{\e 1\dots 1}, \e\in\{0,1\}^{k}$ into two class: the first class consists of all $f_{\e 1\dots 1}$ with $\e_{k}=0$, and the second consists of all $f_{\e 1\dots 1}$ with $\e_{k}=1$. Since all $f_{\e 1\dots 1}$ in the first class are bounded, we may use  Cauchy-Schwartz inequality to drop all the functions in the first class. We have
  \begin{equation}\nonumber
  \begin{split}
  & A_{k}\leq \Bigl(\frac{1}{N^{d-k-1}}\sum_{n_{1},\dots,n_{d-k-1}=0}^{N-1}\frac{1}{N^{2k}} \sum_{m_{d-k+1},\dots,m_{d}=0}^{N-1}\sum_{-m_{i}\leq n_{i}\leq N-1-m_{i}, d-k+1\leq i\leq  d}
  \\&\qquad
  \Bigl(\frac{1}{N}\sum_{n_{d-k}=0}^{N-1}\prod_{\e\in\{0,1\}^{d-k-1}}\prod_{\e'\in\{0,1\}^{k}}f_{\e 1\dots 1}\Bigl(\prod_{i=1}^{d-k-1}T_{i}^{n_{i}\cdot\e_{i}}\prod_{i=d-k+1}^{d}T_{i}^{m_{i}+n_{i}\cdot\e'_{i}}(T_{d-k}^{n_{d-k}}x)\Bigr)\Bigr)^{2}\Bigr)^{2^{d-k-1}}
  \end{split}
  \end{equation}
Expanding the square on the right hand side and reparametrizing the indices, it is easy to see that the right hand side is exactly $A_{k+1}$, which finishes the proof for the case $\sigma=(1\dots 1)$.

\

Now we prove the general case. Let $1\leq k\leq d$ and $\sigma\in E_{k}$.
To ease the notation, we assume that $\sigma=(1\dots 10\dots0)$ (the general case can be proved in a similar way). Note that
  \begin{equation}\nonumber
  \begin{split}
  & \quad\frac{1}{N^{d}} \sum_{n_{1},\dots,n_{d}=0}^{N-1}\prod_{\e\in E_{k}}f_{\e}\Bigl(\prod_{i=1}^{d}T_{i}^{n_{i}\cdot\e_{i}}x\Bigr)
  \\&
  =\frac{1}{N^{d-k}}\sum_{n_{k+1},\dots,n_{d}=0}^{N-1}
  \Bigl(\frac{1}{N^{k}}\sum_{n_{1},\dots,n_{k}=0}^{N-1}\prod_{\e\leq\sigma}g_{n_{k+1},\dots,n_{d},\e}(\prod_{i=1}^{k}T_{i}^{n_{i}\cdot\e_{i}}x)\Bigr),
  \end{split}
  \end{equation}
where $$g_{n_{k+1},\dots,n_{d},\e}(x)=\prod_{\e'\in E_{k},\e'\cap\sigma=\e}
f_{\e'}\Bigl(\prod_{i=k+1}^{d}T_{i}^{n_{i}\cdot\e'_{i}}x\Bigr).$$
Note that $g_{n_{k+1},\dots,n_{d},\sigma}(x)=f_{\sigma}$. Replacing $d$ by $k$ in (\ref{tem3}), we have that
  \begin{equation}\nonumber
  \begin{split}
\quad\frac{1}{N^{d}} \sum_{n_{1},\dots,n_{d}=0}^{N-1}\prod_{\e\in E_{k}}f_{\e}\Bigl(\prod_{i=1}^{d}T_{i}^{n_{i}\cdot\e_{i}}x\Bigr)
  \leq\frac{1}{N^{d-k}}\sum_{n_{k+1},\dots,n_{d}=0}^{N-1}
  \vert S_{\sigma,N}(f_{\sigma},x)\vert^{\frac{1}{2^{k}}}=\vert S_{\sigma,N}(f_{\sigma},x)\vert^{\frac{1}{2^{k}}},
  \end{split}
  \end{equation}
which finishes the proof.
\end{proof}

The following lemma shows that the cubic pointwise average result passes through the $L^{1}(\mu)$ limit:
\begin{lem}\label{fd_appro}
	Let $(X,\mu,T_{1},\dots,T_{d})$ be a measure preserving system with commuting transformations and $f_{\e}\in L^{\infty}(\mu), \e\in\{0,1\}^{d}$. Let $\sigma\in\{0,1\}^{d}$ and $(g_{n,\sigma})_{n\in\N}$ be a sequence of $L^{\infty}(\mu)$ functions such that $\Vert f_{\sigma}-g_{n,\sigma}\Vert_{1}\to 0$ as $n\to\infty$. If for all $n\in\N$, the average
	\begin{equation}\nonumber
	\begin{split}
	\frac{1}{N^{d}} \sum_{n_{1},\dots,n_{d}=0}^{N-1}g_{n,\sigma}(\prod_{i=1}^{d}T_{i}^{n_{i}\cdot\sigma}x)\prod_{\sigma\neq\e\in\{0,1\}^{d}}f_{\e}\Bigl(\prod_{i=1}^{d}T_{i}^{n_{i}\cdot\e_{i}}x\Bigr)
	\end{split}
	\end{equation}
	converges for $\mu$-a.e $x\in X$ as $N\to\infty$. Then the average
		\begin{equation}\nonumber
		\begin{split}
		\frac{1}{N^{d}} \sum_{n_{1},\dots,n_{d}=0}^{N-1}\prod_{\e\in\{0,1\}^{d}}f_{\e}\Bigl(\prod_{i=1}^{d}T_{i}^{n_{i}\cdot\e_{i}}x\Bigr)
		\end{split}
		\end{equation}
		also converges for $\mu$-a.e $x\in X$ as $N\to\infty$.
\end{lem}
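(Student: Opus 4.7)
The plan is to compare the $f_\sigma$-average with the $g_{n,\sigma}$-average and exploit the hypothesis $\|f_\sigma - g_{n,\sigma}\|_1 \to 0$. Denote by $A_N(u, x)$ the average obtained by placing a function $u$ at the $\sigma$-coordinate and keeping the other $f_\e$, $\e \neq \sigma$, fixed. By linearity $A_N(f_\sigma, x) - A_N(g_{n,\sigma}, x) = A_N(h_n, x)$ where $h_n := f_\sigma - g_{n,\sigma}$. Since $\prod_{i=1}^d T_i^{n_i \sigma_i} x = \prod_{i \in I_\sigma} T_i^{n_i} x$ depends only on $(n_i)_{i\in I_\sigma}$, and the remaining $f_\e$'s are uniformly bounded by $C := \prod_{\e \neq \sigma}\|f_\e\|_\infty$, the triangle inequality gives
\[|A_N(h_n, x)| \leq \frac{C}{N^{|\sigma|}}\sum_{\substack{0\leq n_i\leq N-1 \\ i\in I_\sigma}} |h_n|\Bigl(\prod_{i\in I_\sigma} T_i^{n_i}x\Bigr),\]
the factor $N^{d-|\sigma|}$ coming from the free indices $n_j$, $j\notin I_\sigma$, having cancelled the $N^{-d}$ prefactor.

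By the multidimensional pointwise ergodic theorem applied to the commuting $\Z^{|\sigma|}$-action generated by $\{T_i : i\in I_\sigma\}$, the right-hand side converges $\mu$-a.e.\ to $C\cdot\E(|h_n|\,|\,\I_{I_\sigma})(x)$. Since $A_N(g_{n,\sigma}, x)$ converges $\mu$-a.e.\ by hypothesis, this yields
\[\limsup_N A_N(f_\sigma, x) - \liminf_N A_N(f_\sigma, x) \leq 2\limsup_N |A_N(h_n, x)| \leq 2C \cdot \E(|h_n|\,|\,\I_{I_\sigma})(x) \qquad \mu\text{-a.e.}\]
For any $\delta > 0$, Markov's inequality together with $\|\E(|h_n|\,|\,\I_{I_\sigma})\|_1 \leq \|h_n\|_1$ yields
\[\mu\bigl(\bigl\{\limsup_N A_N(f_\sigma, \cdot) - \liminf_N A_N(f_\sigma, \cdot) > \delta\bigr\}\bigr) \leq \frac{2C\|f_\sigma - g_{n,\sigma}\|_1}{\delta},\]
which tends to $0$ as $n\to\infty$. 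The left-hand side is independent of $n$, hence vanishes for every $\delta > 0$, and we conclude $\limsup_N A_N(f_\sigma, x) = \liminf_N A_N(f_\sigma, x)$ $\mu$-a.e., i.e.\ pointwise convergence of the $f_\sigma$-average.

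The argument is essentially routine once one observes that, after the trivial bound on the other $2^d-1$ functions, only the $L^1$-information about $h_n$ is needed; the sole input beyond the triangle and Markov inequalities is the classical pointwise ergodic theorem for commuting transformations. There is no substantial obstacle in this step: the role of this lemma is simply to transfer pointwise convergence from a dense subclass of functions (such as continuous ones arising from the strictly ergodic topological model of Theorem~\ref{Thm:StrictlyModel}) to arbitrary bounded measurable functions, one coordinate at a time.
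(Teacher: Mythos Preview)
Your proof is correct and follows essentially the same approach as the paper: bound the difference $A_N(f_\sigma,x)-A_N(g_{n,\sigma},x)$ by a multidimensional Birkhoff average of $|f_\sigma-g_{n,\sigma}|$ over the subgroup $\langle T_i:i\in I_\sigma\rangle$, pass to the conditional expectation $\E(|h_n|\mid\I_{I_\sigma})$, and conclude via Markov. The paper's write-up differs only cosmetically: it first thins to a subsequence with $\|h_n\|_1\leq 2^{-n}$, invokes Borel--Cantelli on the sets $\{\E(|h_n|\mid\I_{I_\sigma})>1/n\}$, and then checks the Cauchy property of $(A_N(f_\sigma,x))_N$ directly, whereas you bound the oscillation $\limsup-\liminf$ and send $n\to\infty$ in the Markov bound---a slightly slicker packaging of the same idea.
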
	
\begin{proof}
	We may assume without loss of generality that 
	$\Vert f_{\sigma}-g_{n,\sigma}\Vert_{1}\leq \frac{1}{2^{n}}$ for all $n\in\N$.
Denote
$$S_{N}(x)=\frac{1}{N^{d}} \sum_{n_{1},\dots,n_{d}=0}^{N-1}\prod_{\e\in\{0,1\}^{d}}f_{\e}\Bigl(\prod_{i=1}^{d}T_{i}^{n_{i}\cdot\e_{i}}x\Bigr)$$
and
$$S'_{n,N}(x)=\frac{1}{N^{d}} \sum_{n_{1},\dots,n_{d}=0}^{N-1}g_{n,\sigma}(\prod_{i=1}^{d}T_{i}^{n_{i}\cdot\sigma}x)\prod_{\sigma\neq\e\in\{0,1\}^{d}}f_{\e}\Bigl(\prod_{i=1}^{d}T_{i}^{n_{i}\cdot\e_{i}}x\Bigr).$$
Suppose that $I_{\sigma}=\{a_{1},\dots,a_{k}\}$.
By the telescoping theorem, Birkhoff Theorem and the assumption, there exists a set $A$ with $\mu(A)=1$ such that for all $x\in A$
\begin{equation}\label{Bound}
\begin{split}
\limsup_{N\to\infty}\Bigl\vert S_{N}(x)-S'_{n,N}(x)
\Bigr\vert
\leq \E\Bigl(\vert f_{\sigma}-g_{n,\sigma}\vert \big\vert \I_{T_{a_{1}},\dots,T_{a_{k}}}\Bigr)(x)
\end{split}
\end{equation}
and $\lim_{N\to\infty}S'_{n,N}(x)$
exists
for all $x\in A, n\in\N$. Let $$A_{n}=\Bigl\{x\in X\colon\E\Bigl(\vert f_{\sigma}-g_{n,\sigma}\vert \big\vert \I_{T_{a_{1}},\dots,T_{a_{k}}}\Bigr)(x)>\frac{1}{n}\Bigr\}.$$
By Markov inequality, we have that 
$$\mu(A_{n})\leq n\Vert f_{\sigma}-g_{n,\sigma}\Vert_{1}\leq\frac{n}{2^{n}}.$$
By Borel-Cantelli Lemma, $\mu(\limsup_{n}A_{n})=0.$ Let
$B=A\cap(\limsup_{n}A_{n})^{c}$. Then $\mu(B)=1$. 

Fix $\d>0$ and $x\in B$. Since $x\notin\limsup_{n}A_{n}$, there exists
$n\geq\frac{1}{\d}$ such that $x\notin A_{n}$.
By (\ref{Bound}), there exists $N_{0}\in\N$ such that
$$\Bigl\vert S_{N}(x)-S'_{n,N}(x)\Bigr\vert\leq 2\E\Bigl(\vert f_{\sigma}-g_{n,\sigma}\vert \big\vert \I_{\e}\Bigr)(x)$$
for all $N\geq N_{0}$. 
Since $\lim_{N\to\infty}S'_{n,N}(x)$ exists, there exists $N_{1}>N_{0}$ such that
$$\Bigl\vert S'_{n,M}(x)-S'_{n,N}(x)\Bigr\vert<\d$$
for all $M,N\leq N_{1}$.
Let $M,N\geq N_{1}$, then
\begin{equation}\nonumber
\begin{split}&
\quad\Bigl\vert S_{M}(x)-S_{N}(x)\Bigr\vert
\\&\leq\Bigl\vert S_{M}(x)-S'_{n,M}(x)\Bigr\vert+\Bigl\vert S'_{n,M}(x)-S'_{n,N}(x)\Bigr\vert+\Bigl\vert S_{N}(x)-S'_{n,N}(x)\Bigr\vert
\\& \leq 4\E\Bigl(\vert f_{\sigma}-g_{n,\sigma}\vert \big\vert \I_{T_{a_{1}},\dots,T_{a_{k}}}\Bigr)(x)+\d\leq \frac{4}{n}+\d<5\d.
\end{split}
\end{equation}
Since $\d$ is arbitrary, we deduce that $\{S_{N}(x)\}_{N\in\N}$ is a Cauchy sequence for all $x\in B$ and so $\lim_{N\to\infty}S_{N}(x)$ exists for $\mu$-a.e. $x\in X$.	
\end{proof}	

Now we are able to prove the Theorem \ref{THM:cubicpointwise}. The difference between Theorems \ref{THM:cubicpointwise} and \ref{dexample} is that there is an additional double average in the latter theorem. Our strategy is to use Lemma \ref{ConvergenceContinuous}, \ref{BoundAverage} and \ref{fd_appro} to drop this double average to obtain Theorem \ref{THM:cubicpointwise}.

\begin{thm*}
	Let $(X,\mu,T_{1},\dots,T_{d})$ be a measure preserving system with commuting transformations and $f_{\e}\in L^{\infty}(\mu), \e\in\{0,1\}^{d},\e\neq(0,\dots,0)$. Then the average
	\begin{equation}\nonumber
	    \begin{split}
	    \frac{1}{N^{d}} \sum_{n_{1},\dots,n_{d}=0}^{N-1}\prod_{(0,\dots,0)\neq\e\in\{0,1\}^{d}}f_{\e}\Bigl(\prod_{i=1}^{d}T_{i}^{n_{i}\cdot\e_{i}}x\Bigr)
	    \end{split}
	\end{equation}
converges for $\mu$-a.e $x\in X$ as $N\to\infty$.

\end{thm*}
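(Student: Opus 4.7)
The plan is to combine the Van der Corput estimate of Lemma \ref{BoundAverage}, the pointwise seminorm convergence of Proposition \ref{prop:convergenceseminorm}, and the approximation principle of Lemma \ref{fd_appro}, organized around an induction on the maximum weight of the vertices carrying nontrivial functions.

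\emph{Setup and base case.} By Theorem \ref{Thm:ext}, we may pass to a $Z$-sated ergodic extension; since pointwise convergence in the extension forces pointwise convergence in the original, we assume henceforth that $(X,\mu,T_1,\dots,T_d)$ is itself ergodic and $Z$-sated. Define
\[
k^\ast := \max\{|\e|\colon \e\in\{0,1\}^d\setminus\{0\},\ f_\e\not\equiv 1\},
\]
using the convention that a missing vertex carries the constant $1$. We induct on $k^\ast$. If $k^\ast\le 1$ the average factorizes as $\prod_{i=1}^d\tfrac{1}{N}\sum_{n_i=0}^{N-1} f_{e_i}(T_i^{n_i}x)$ and Birkhoff's theorem delivers pointwise convergence.

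\emph{Inductive step, killing a top vertex.} Assume the result whenever the maximum weight is strictly less than $k^\ast$, and pick any $\sigma$ with $|\sigma|=k^\ast$ and $f_\sigma\not\equiv 1$. Decompose $f_\sigma=g+h$ where $g:=\E\!\left(f_\sigma \big\vert \bigvee_{i\in I_\sigma}\I_{T_i}\right)$, and write $A_N^{h}(x)$ for the average with $f_\sigma$ replaced by $h$ and the other $f_\e$ unchanged. After embedding the product into $E_{k^\ast}$ by assigning the constant $1$ to missing vertices, Lemma \ref{BoundAverage} gives $|A_N^{h}(x)|^{2^{k^\ast}}\le C\cdot S_{\sigma,N}(h,x)$, with $C$ depending only on $\|f_\e\|_\infty$. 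Applying Proposition \ref{prop:convergenceseminorm} to the $\Z^{k^\ast}$-subsystem $(X,\mu,\{T_i\}_{i\in I_\sigma})$ (after ergodic decomposition under this subaction) yields $S_{\sigma,N}(h,x)\to\normm{h}_{\mu_z,T_{i\in I_\sigma}}^{2^{k^\ast}}$ for $\mu$-a.e.\ $x$. By Lemma \ref{lem:SatedAndMagic} applied to the idempotent class $\bigvee_{i\in I_\sigma}Z^{\{i\}}$, the $Z$-sated system $X$ is magic for $\{T_i\}_{i\in I_\sigma}$; since $\E(h\vert\bigvee_{i\in I_\sigma}\I_{T_i})=0$ by construction, $\normm{h}_{\mu,T_{i\in I_\sigma}}=0$, and Theorem \ref{ine}(6) propagates this vanishing to $\nu$-a.e.\ ergodic component $\mu_z$. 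Thus $A_N^{h}(x)\to 0$ pointwise a.e.

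\emph{Approximation and absorption.} It remains to handle the case $f_\sigma=g$ with $g$ measurable with respect to $\bigvee_{i\in I_\sigma}\I_{T_i}$. Approximate $g$ in $L^1(\mu)$ by finite linear combinations of simple tensors $\prod_{i\in I_\sigma}h_i$ with each $h_i\in L^\infty(\mu)$ and $T_i$-invariant (such sums are $L^1$-dense in $L^1$ of the join $\sigma$-algebra). By linearity and Lemma \ref{fd_appro}, it suffices to treat $f_\sigma=\prod_{i\in I_\sigma}h_i$. In this case, the $T_i$-invariance of $h_i$ gives $h_i\bigl(\prod_{j\in I_\sigma}T_j^{n_j}y\bigr)=h_i\bigl(\prod_{j\in I_\sigma\setminus\{i\}}T_j^{n_j}y\bigr)$, and we absorb each $h_i$ into the factor at the vertex $\sigma-e_i$ by setting $\tilde f_{\sigma-e_i}:=h_i\cdot f_{\sigma-e_i}$. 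The resulting average has the same form as \eqref{equ:cube} but with $f_\sigma$ replaced by the constant $1$ and modifications confined to weight $k^\ast-1$.

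\emph{Iteration and main obstacle.} Repeating this elimination once for each vertex of weight $k^\ast$ (successive eliminations only alter strictly lower weight vertices, hence leave the other top-weight functions intact) reduces the maximum weight to at most $k^\ast-1$, and the inductive hypothesis concludes. The principal technical obstacle is bookkeeping across iterations: one must verify (i) that the subsystem version of Proposition \ref{prop:convergenceseminorm} follows from the stated ergodic version by ergodic decomposition of the subaction; (ii) that at each stage the absorbed factors $\tilde f_{\sigma-e_i}$ remain in $L^\infty(\mu)$, which is automatic since the $h_i$ are bounded and the original $f_{\sigma-e_i}$ are bounded; and (iii) that Lemma \ref{fd_appro} can be invoked successively, which is valid since each invocation only requires an $L^1$ approximation of a single (bounded) function at one vertex.
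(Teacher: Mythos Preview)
Your proposal is correct and follows essentially the same approach as the paper's own proof: reduce to a $Z$-sated ergodic extension, induct on the maximal weight of nontrivial vertices, use Lemma \ref{BoundAverage} together with Proposition \ref{prop:convergenceseminorm} (via ergodic decomposition under the subaction and the magic property from Lemma \ref{lem:SatedAndMagic}) to annihilate the component orthogonal to $\bigvee_{i\in I_\sigma}\I_{T_i}$, and then use Lemma \ref{fd_appro} plus absorption of simple tensors to push the $Z_{I_\sigma}$-measurable component down to lower-weight vertices. The only organizational difference is that the paper packages the induction as ``property-$k$'' and treats all weight-$(k+1)$ vertices simultaneously via a multilinear expansion, whereas you run an explicit inner induction eliminating one top vertex at a time; the ingredients and logic are the same.
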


\begin{proof} We may assume without loss of generality that all the functions are bounded by 1 in the $L^{\infty}(\mu)$ norm. 

We say that \emph{property-$k$} holds if 
the average
	\begin{equation}\nonumber
	\begin{split}
	\frac{1}{N^{d}} \sum_{n_{1},\dots,n_{d}=0}^{N-1}\prod_{\e\in E_{k}}f_{\e}\Bigl(\prod_{i=1}^{d}T_{i}^{n_{i}\cdot\e_{i}}x\Bigr)
	\end{split}
	\end{equation}
converges for $\mu$-a.e $x\in X$ as $N\to\infty$ for all $f_{\e}\in L^{\infty}(\mu),\e\in E_{k}$. It suffices to show that property-d holds (and then take $f_{(0\dots 0)}\equiv 1$). 

Note that
	\begin{equation}\nonumber
	\begin{split}
	\frac{1}{N^{d}} \sum_{n_{1},\dots,n_{d}=0}^{N-1}\prod_{\e\in E_{1}}f_{\e}\Bigl(\prod_{i=1}^{d}T_{i}^{n_{i}\cdot\e_{i}}x\Bigr)
	=f_{0\dots 0}(x)\cdot\prod_{i=1}^{d}\Bigl(\frac{1}{N}\sum_{n=0}^{N-1}f_{i0\dots 0}(T^{n}_{i}x)\Bigr).
	\end{split}
	\end{equation}	
So property-1 holds by Birkhoff Ergodic Theorem. Suppose that property-$k$ holds for some $1\leq k<d$, we claim that property-$(k+1)$ also holds.
	
Suppose first that for all $\e\in D_{k+1}$,  $f_{\e}$ is an \emph{$I_{\e}$-product}, meaning that $f_{\e}=\prod_{j\in I_{\e}}h_{j,\e}$ for some $h_{j,\e}$ measurable with respect to $\I_{T_{j}}$.
Note that for  $\e\in D_{k+1}$, we have $$h_{j,\e}\Bigl(\prod_{i=1}^{d}T_{i}^{n_{i}\cdot\e_{i}}x\Bigr)=h_{j,\e}\Bigl(\prod_{1\leq i\leq d, i\neq j}T_{i}^{n_{i}\cdot\e_{i}}x\Bigr)=h_{j,\e^{\hat{j}}}\Bigl(\prod_{i=1}^{d}T_{i}^{n_{i}\cdot\e_{i}}x\Bigr),$$
where $\e^{\hat{j}}\in D_{k}$ is defined by $(\e^{\hat{j}})_{i}=\e_{i}$ for $i\neq j$ and $(\e^{\hat{j}})_{j}=0$. This implies that by combining similar terms together, we have that
	\begin{equation}\label{tem1}
	\begin{split}
	\frac{1}{N^{d}} \sum_{n_{1},\dots,n_{d}=0}^{N-1}\prod_{\e\in E_{k+1}}f_{\e}\Bigl(\prod_{i=1}^{d}T_{i}^{n_{i}\cdot\e_{i}}x\Bigr)
	=\frac{1}{N^{d}} \sum_{n_{1},\dots,n_{d}=0}^{N-1}\prod_{\e\in E_{k}}g_{\e}\Bigl(\prod_{i=1}^{d}T_{i}^{n_{i}\cdot\e_{i}}x\Bigr).
	\end{split}
	\end{equation}	
for some $g_{\e}$ bounded by 1. By induction hypothesis, (\ref{tem1}) converges for $\mu$-a.e. $x\in X$.

We now suppose that for all $\e\in D_{k+1}$, $f_{\e}$ is a finite sum of $I_{\e}$-products. It is easy to see that property-$(k+1)$ holds in this case.

We then suppose that for all $\e\in D_{k+1}$, $f_{\e}$ is measurable with respect to $\mathcal{Z}_{I_{\e}}=\vee_{i\in I_{\e}}\I_{T_{i}}$. Thus each $f_{\e}$ can be approximated in $L^{1}(\mu)$ by a sequence of functions which are finite sums of $I_{\e}$-products. So property-$(k+1)$ holds in this case by Lemma \ref{fd_appro}. 

It now suffices to prove property-$(k+1)$ under the assumption that $\mathbb{E}(f_{\sigma}|\mathcal{Z}_{I_{\sigma}})=0$ for some $\sigma\in D_{k+1}$. In fact we show that the average goes to 0 for a.e. $x\in X$. Since $X$ is magic for $I_{\sigma}$, we have that $\normm{f_{\sigma}}_{\mu,\sigma}=0$. 

Let $\mu=\int_{X}\mu_{x}d\mu(x)$ be the ergodic decomposition of $\mu$ under $T_{a_{1}},\dots,T_{a_{k+1}}$, where $I_{\sigma}=\{a_{1},\dots,a_{k+1}\}$. It suffices to show that average goes to 0 $\mu_x$-a.e. for $\mu$-a.e. $x\in X$. 
Since $0=\normm{f_{\sigma}}_{\mu,\sigma}=\int\normm{f_{\sigma}}_{\mu_{x},\sigma}d\mu(x) $ we have that for $\mu$-a.e. $x\in X$, $\normm{f_{\sigma}}_{\mu_x,\sigma}=0$.

Applying Proposition \ref{prop:convergenceseminorm} to $(X,\mu_x,T_{a_{1}},\dots,T_{a_{k+1}})$, we have
\begin{equation}\nonumber
\begin{split}
\lim_{N\to\infty}S_{\sigma,N}(f_{\sigma},y)=\normm{f_{\sigma}}^{k+1}_{\mu_{x},\sigma}
\end{split}
\end{equation}
for $\mu_{x}$-a.e. $y\in X$. 
By Lemma \ref{BoundAverage}, we have
	\begin{equation}\nonumber
	\begin{split}
	0\leq 	\Bigl(\frac{1}{N^{d}} \sum_{n_{1},\dots,n_{d}=0}^{N-1}\prod_{\e\in E_{k+1}}f_{\e}\Bigl(\prod_{i=1}^{d}T_{i}^{n_{i}\cdot\e_{i}}y\Bigr)\Bigr)^{2^{k+1}}\leq  S_{\sigma,N}(f_{\sigma},y)
	\end{split}
	\end{equation}
	for all $y\in X$.
So for $\mu_x$-a.e. $y\in X$, $$\lim_{N\to \infty}\frac{1}{N^{d}} \sum_{n_{1},\dots,n_{d}=0}^{N-1}\prod_{\e\in E_{k+1}}f_{\e}\Bigl(\prod_{i=1}^{d}T_{i}^{n_{i}\cdot\e_{i}}y\Bigr)=0,$$
which implies that $$\lim_{N\to \infty}\frac{1}{N^{d}} \sum_{n_{1},\dots,n_{d}=0}^{N-1}\prod_{\e\in E_{k+1}}f_{\e}\Bigl(\prod_{i=1}^{d}T_{i}^{n_{i}\cdot\e_{i}}x\Bigr)=0$$ for $\mu$-a.e. $x\in X$.	
This finishes the proof.
\end{proof}

\section{Pointwise convergence of multiple averages} \label{Sec:MultiplesAverages}
This section is devoted to proving Theorem \ref{THM:averagedmultiple}.

\subsection{The Furstenberg-Ryzhikov self-joining}\label{Sec:Fj}
Let $(X,\mu,T_1,\ldots,T_d)$ be a system with commuting transformations. The Furstenberg-Ryzhikov self-joining $\mu^F$ associated to $(T_1,\ldots,T_d)$ is the measure on $X^{d}$ defined by 

\[\mu^F(f_1\otimes f_2\cdots \otimes f_d) =\lim_{N\to \infty} \frac{1}{N} \sum_{n=0}^{N-1} \int_{X} f_1(T_1^n x)\cdots f_d(T_d^n x) d\mu(x).   \]  
The convergence of the right hand follows from the $L^2$-convergence of multiple averages for commuting transformations \cite{Tao}  so $\mu^{F}$ is well defined. In \cite{Ryzhikov}, Ryzhikov linked for the first time (as far as we know) the convergence of multiple averages with the collection of  self joinings of a dynamical system and we thank J. Paul Thouvenot for pointing us this reference. 

The measure $\mu^F$ is invariant under the diagonal transformations $T_i\times \cdots \times T_i$ $i=1,\ldots,d$ and under $T_1\times T_2\cdots \times T_d$. We let $H_d$ denote the group generated by all these transformations and we think of it as a $\Z^{d+1}$ action on $(X^d,\mu^F)$.

We write $X_i$ to denote the $i$-th copy of $X$ inside $X^d$ under $\mu^F$. So we think of $(X^d,\mu^F)$ as $(X_1\times\cdots \times X_d,\mu^F)$.

\begin{rem}\label{rem:proj}
The projection of the Furstenberg-Ryzhikov self-joining of $(T_1,\ldots,T_d)$ onto the last $d-1$ coordinates is the Furstenberg-Ryzhikov self-joining of $(T_1^{-1}T_2, T_1^{-1}T_3,$ $\ldots,T_1^{-1}T_d)$. 
\end{rem}

Let $(X,\mu,T_1,\ldots,T_d)$ be a measure preserving system. For $\epsilon \subseteq [d]$, $\# \epsilon \geq 2$,  let $\Phi_{\epsilon}(X)$ denote the factor of $X$ associated to the $\sigma$-algebra invariant under all the transformations $T_i^{-1}T_j$, for $i,j \in \epsilon$, $i\neq j$. Note that in $\Phi_{\epsilon}(X)$, all transformations $T_i$, $i\in \epsilon$ are equal. Thus, we can regard $\Phi_{\epsilon}(X)$ as a $\mathbb{Z}^{d-\# \epsilon +1}$-action (by taking one candidate from $T_i$, $i\in \epsilon$ and removing the rest). When there is no confusion, we write $\Phi_{\epsilon}$ instead of $\Phi_{\epsilon}(X)$ for short. 
The importance about these factors is that they allow us to the study the Furstenberg-Ryzhikov self-joining of a ``sated enough system''. For $\epsilon \subseteq [d]$, $\# \epsilon \geq 2$ we denote $Z_{\Phi}^{\epsilon}=Z^{\Lambda}$, where $\Lambda$ is the subgroup spanned by $T_{i}^{-1}T_j, i,j\in\e$. In other words, $Z_{\Phi}^{\epsilon}$ is the idempotent class where the transformations $T_i$, $i\in\epsilon$ are all equal. 

\begin{defn}\label{zast}
Let $(X,\mu,T_1,\ldots,T_d)$ be a measure preserving system. We say that it is {\it $\mathcal{Z}^{\ast}$-sated} if it is sated for the class
\[  \mathcal{Z}_{\epsilon}:= \bigvee_{j\in [d]\setminus \epsilon} Z_{\Phi}^{\epsilon \cup \{j\}} \] 
for every $\epsilon \subseteq [d]$,  $\# \epsilon \geq 2$. 
\end{defn}

This satedness assumption gives a nice picture of what happens in the Furstenberg-Ryzhikov self-joining. It implies for instance that the systems $X_1,\ldots,X_d$ under $\mu^F$ are relative independent over their factors \[\bigvee_{j\neq 1} \Phi_{\{1,j\}}, \quad \bigvee_{j\neq 2} \Phi_{\{2,j\}}, \quad \ldots, \bigvee_{j\neq d} \Phi_{\{d,j\}}\]

We refer to \cite{A}, Chapter 4 for details. We use Austin's diagram of the Furstenberg-Ryzhikov self-joining (\cite{A}, Chapter 5) to have a clear picture of it (and we build a topological model for it in what follows):

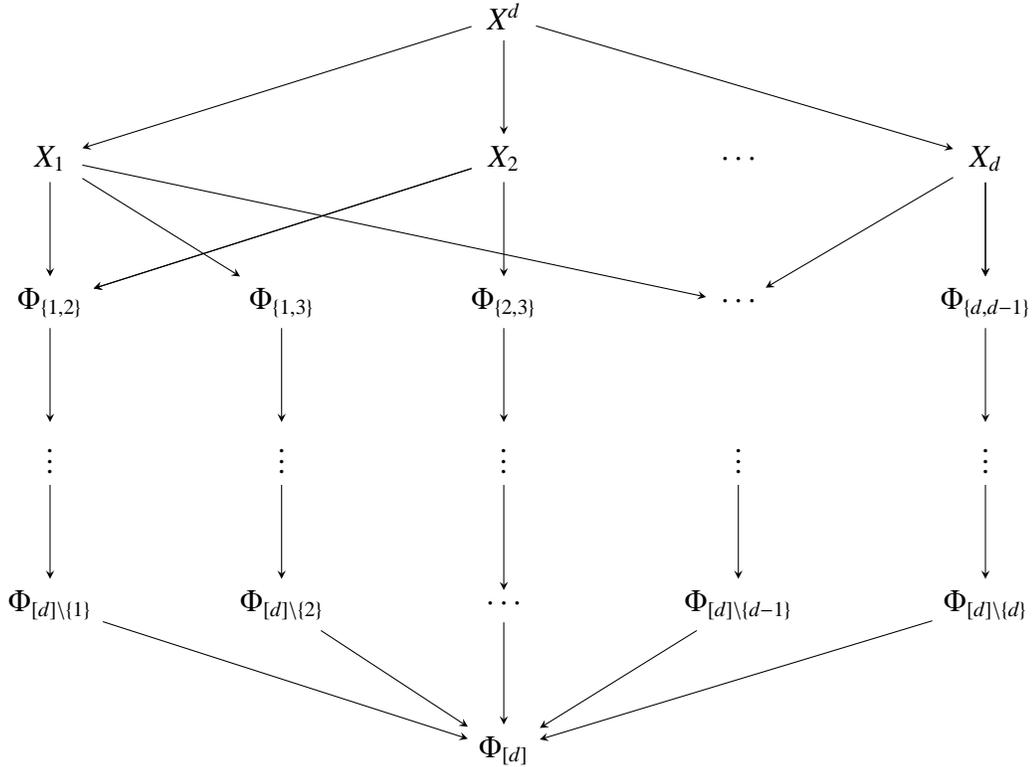
\begin{figure}[H]
 \begin{tikzpicture}
  \matrix (m) [matrix of math nodes,row sep=3em,column sep=4em,minimum width=2em,ampersand replacement=\&]
  {
      \&  \&  X^d  \& \&  \\
     X_1 \& \& X_2 \& \cdots \& X_d \\
    \Phi_{\{1,2\}}  \& \Phi_{\{ 1,3\}} \& \Phi_{ \{ 2,3\}}  \&  \cdots \& \Phi_{\{d,d-1\}}\\  
    \vdots \& \vdots \& \vdots \& \vdots \& \vdots  \\
    \Phi_{[d]\setminus \{1\} } \& \Phi_{[d]\setminus \{2\} } \& \cdots \&\Phi_{[d]\setminus \{d-1\} } \& \Phi_{[d]\setminus \{d\} }   \\  
     \&  \& \Phi_{[d]}  \& \& \\ };
  \path[-stealth]
     (m-1-3) edge  (m-2-1) 
     (m-1-3) edge  (m-2-3)
     (m-1-3) edge  (m-2-5)
       (m-2-1) edge  (m-3-1)  
       (m-2-1) edge  (m-3-2)
       (m-2-1) edge  (m-3-4)       
       
       (m-2-5) edge  (m-3-5)
       (m-2-5) edge  (m-3-4)
       
       (m-2-3) edge  (m-3-3) 
       (m-2-3) edge  (m-3-1)
       (m-2-5) edge  (m-3-5)
       (m-2-3) edge  (m-3-1)        
       
       (m-3-3) edge  (m-4-3)         
       (m-3-5) edge  (m-4-5)
       (m-3-1) edge  (m-4-1)
       (m-3-2) edge  (m-4-2)

       (m-4-1) edge  (m-5-1)        
       (m-4-2) edge  (m-5-2) 
       (m-4-3) edge  (m-5-3)
       (m-4-4) edge  (m-5-4)
       (m-4-5) edge  (m-5-5)   
       
       (m-5-1) edge  (m-6-3)   
       (m-5-2) edge  (m-6-3)
       (m-5-5) edge  (m-6-3)
       (m-5-4) edge  (m-6-3)
       (m-5-3) edge  (m-6-3)
       ;
\end{tikzpicture}
\caption{Austin's diagram of the Furstenberg-Ryzhikov self-joining and its factors. We omit writing the measure for convenience.}
 \end{figure}

\subsection{Construction of a model for the Furstenberg-Ryzhikov self-joining}\label{Sec:Fj2}

In this section, we build a topological model for the Furstenberg-Ryzhikov self joining of (an extension) of an ergodic system. We show that this implies Theorem \ref{THM:averagedmultiple}. We mainly follow ideas from Austin's works, which are nicely exposed in \cite{A}. 
\begin{rem}
We can assume from the beginning that $\Phi_{\epsilon}(X)$ is a free $\mathbb{Z}^{d-\# \epsilon +1}$-action. To do this, since we can freely pass to extensions, we may replace our system $X$ by (an arbitrary) ergodic joining $Y$ of $X$ with a free $\mathbb{Z}^{d-\# \epsilon +1}$-system. The $\Phi_{\epsilon}(Y)$ factor of $Y$ is an extension of the free $\mathbb{Z}^{d-\# \epsilon +1}$- system and thus the $\mathbb{Z}^{d-\# \epsilon +1}$ respective action is also free in $\Phi_{\epsilon}(Y)$. We include the freeness assumption since it is needed to build relative strictly ergodic models and we assume it in all of what follows.
\end{rem}

\begin{lem}\label{Lem:productfactor}
Let $(X,\mu,T_1,T_2)$ be an ergodic free $\Z^2$ action. Suppose its $T_1$-invariant $\sigma$-algebra and its $T_2$ invariant $\sigma$-algebras define factor maps free under the action of $T_{2}$ and $T_1$ respectively. Then, there exist strictly ergodic topological models for the factors $X\to \I_{T_{i}}(X)$, $i=1,2$. 
\end{lem}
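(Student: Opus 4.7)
The plan is to build a single strictly ergodic topological model $\widehat{X}$ of $X$ from which both factor maps $X\to \I_{T_i}(X)$, $i=1,2$, can be modeled topologically.

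First, I would view each $\I_{T_i}(X)$ as a $\Z^{2}$-system on which $T_i$ acts trivially. By hypothesis and the ergodicity of $X$, the remaining generator $T_{3-i}$ acts freely and ergodically on $\I_{T_i}(X)$, so the Jewett--Krieger theorem yields a strictly ergodic topological model $\widehat{Y}_i$ of $\I_{T_i}(X)$ as a $\Z$-system; we promote $\widehat{Y}_i$ to a $\Z^{2}$-system by setting $T_i=\id$.

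Next, I would show that $\widehat{Y}_1\times \widehat{Y}_2$ equipped with the diagonal $\Z^{2}$-action is a strictly ergodic topological model of $\I_{T_1}(X)\vee\I_{T_2}(X)$. Because $T_1$ is the identity on $\widehat{Y}_1$ and $T_2$ is the identity on $\widehat{Y}_2$, the action decouples: $T_1$ moves only the second coordinate and $T_2$ moves only the first. Thus $\widehat{Y}_1\times\widehat{Y}_2$ is literally a product of two strictly ergodic $\Z$-systems, and the standard product argument invoked in Section~\ref{Sec:ts} yields minimality and unique ergodicity, with unique invariant measure $\widehat{\mu}_1\times\widehat{\mu}_2$. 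Pushing $\mu$ forward via the product of the Jewett--Krieger isomorphisms produces an invariant measure on $\widehat{Y}_1\times\widehat{Y}_2$, which by unique ergodicity must equal $\widehat{\mu}_1\times\widehat{\mu}_2$. In particular $\I_{T_1}(X)\vee\I_{T_2}(X)$ is measurably isomorphic to $\widehat{Y}_1\times\widehat{Y}_2$, and the latter is the sought strictly ergodic model of the join.

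Finally, I would apply Theorem~\ref{Weiss} (Weiss--Rosenthal) to the factor map $X\to \I_{T_1}(X)\vee\I_{T_2}(X)$ using $\widehat{Y}_1\times\widehat{Y}_2$ as the given strictly ergodic model of the base; note that the $\Z^{2}$-action on the product is free as a consequence of the freeness of $T_{3-i}$ on $\widehat{Y}_i$, so the hypotheses of the theorem are met. This produces a strictly ergodic model $\widehat{X}$ of $X$ together with a topological factor map $\widehat{\pi}\colon \widehat{X}\to \widehat{Y}_1\times\widehat{Y}_2$ that intertwines with the measurable factor map. Composing $\widehat{\pi}$ with the two coordinate projections gives the required topological models $\widehat{X}\to \widehat{Y}_i$ of $X\to \I_{T_i}(X)$. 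The only technical point is the unique ergodicity of $\widehat{Y}_1\times \widehat{Y}_2$, which is painless here because of the decoupled action and avoids the more delicate sated bookkeeping needed in Proposition~\ref{Prop:ConstructionModel}.
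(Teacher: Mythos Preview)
Your proposal is correct and follows essentially the same approach as the paper: build strictly ergodic $\Z$-models $\widehat{Y}_i$ for each invariant factor, observe that their product is a strictly ergodic $\Z^2$-model for the join $\I_{T_1}(X)\vee\I_{T_2}(X)$, apply Weiss--Rosenthal to the factor map $X\to\I_{T_1}(X)\vee\I_{T_2}(X)$, and project to each coordinate. You supply a bit more detail than the paper on why the product is strictly ergodic and on the freeness hypothesis for Theorem~\ref{Weiss}, but the argument is the same.
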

\begin{proof}
By Theorem \ref{Weiss}, we may find strictly ergodic topological $(\widehat{X}_{\{2\}},T_1)$ and  $(\widehat{X}_{\{1\}},T_2)$ for the factor maps $(\I_{T_{2}},T_1)$ and $(\I_{T_{1}},T_2)$. Their product $(\widehat{X}_\{{2}\} \times \widehat{X}_{\{1\}},T_1\times\id,\id\times T_2)$ is naturally a strictly ergodic model for $(\I_{T_{1}}\vee \I_{T_{2}},T_1,T_2)$. By Theorem \ref{Weiss}, we can find a strictly ergodic model $\widehat{X}\to \widehat{X}_{\{2\}}\times \widehat{X}_{\{1\}}$  for $X\to \I_{T_{2}}\vee\I_{T_{1}}$. The projection onto each coordinate in $\widehat{X}_{\{2\}}\times \widehat{X}_{\{1\}}$ define the strictly ergodic topological models for $X\to \I_{T_{1}}(X)$ and $X\to \I_{T_{2}}(X)$. 
\end{proof}

\begin{thm} \label{Thm:ConstructionModel}
There exist a topological model for the diagram.
\end{thm}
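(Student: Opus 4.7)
The plan is to construct the topological model by induction through Austin's diagram, traversing it from the bottom to the top, in close analogy with the proof of Proposition \ref{Prop:ConstructionModel}. First I would start at the bottom: since $\Phi_{[d]}$ is an ergodic $\mathbb{Z}$-action (all transformations $T_i$ are identified in this factor), the Jewett--Krieger Theorem provides a strictly ergodic model $\widehat{\Phi}_{[d]}$.

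Next, proceed by downward induction on $k = |\epsilon|$: assume strictly ergodic models $\widehat{\Phi}_{\epsilon'}$ and compatible topological factor maps between them have been constructed for all $\epsilon' \subseteq [d]$ with $|\epsilon'| > k$, where $2 \leq k \leq d-1$. For each $\epsilon$ with $|\epsilon| = k$, first form the topological model for the join $\bigvee_{j \notin \epsilon}\Phi_{\epsilon \cup \{j\}}$ as
\[
Y_\epsilon = \operatorname{supp}\!\Bigl(\prod_{j \notin \epsilon} (\phi_{\epsilon \cup \{j\}})_{\ast}\mu\Bigr) \subseteq \prod_{j \notin \epsilon} \widehat{\Phi}_{\epsilon \cup \{j\}}.
\]
The main step is to prove that $Y_\epsilon$ is strictly ergodic. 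Minimality follows as in Proposition \ref{Prop:MinimalQ}. For unique ergodicity, take an ergodic measure on $Y_\epsilon$ and regard it as a joining of any one factor $\widehat{\Phi}_{\epsilon \cup \{j_0\}}$ with a system belonging to $\bigvee_{j \neq j_0,\, j \notin \epsilon} Z_\Phi^{\epsilon \cup \{j\}}$. The $\mathcal{Z}^*$-satedness of $X$ descends to the corresponding satedness of $\Phi_{\epsilon \cup \{j_0\}}$ (via an analogue of Lemma \ref{lem:SatednessFactors}), so the $\widehat{\Phi}_{\epsilon \cup \{j_0\}}$ coordinate of the joining can be pushed down to $\bigvee_{j' \notin \epsilon \cup \{j_0\}} \widehat{\Phi}_{\epsilon \cup \{j_0,j'\}}$; doing this at every coordinate, the joining descends to the next level. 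Iterating, one finally reaches a joining of copies of $\widehat{\Phi}_{[d]}$, which by the non-degeneracy of common invariant factors must coincide with the diagonal and hence is unique. Then apply Theorem \ref{Weiss} to lift $Y_\epsilon$ to a strictly ergodic model $\widehat{\Phi}_\epsilon$ compatible with the already-built factor maps.

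Once all $\widehat{\Phi}_\epsilon$ for $|\epsilon| \geq 2$ are constructed, apply exactly the same argument one more time at the $X_i$ level: form the support of the pushforward of $\mu$ inside $\prod_{j \neq i} \widehat{\Phi}_{\{i,j\}}$, verify its unique ergodicity using $\mathcal{Z}^*$-satedness (recall Remark after Definition \ref{zast}: $X_i$ is relatively independent over $\bigvee_{j \neq i} \Phi_{\{i,j\}}$), and apply Weiss--Rosenthal to produce a strictly ergodic model $\widehat{X}_i$. Finally, model $(X^d,\mu^F,H_d)$ by taking $\widehat{X^d} = \operatorname{supp}\bigl((\prod_i \phi_i)_\ast \mu^F\bigr)$ inside $\prod_{i=1}^d \widehat{X}_i$; its unique ergodicity is proven by an analogous argument, where the relevant satedness is exactly the $\mathcal{Z}_{\{i,j\}}$-condition that makes $\mu^F$ a joining whose coordinates are relatively independent over $\bigvee_{j \neq i} \Phi_{\{i,j\}}$.

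The principal obstacle is establishing unique ergodicity at each intermediate level; this is where the $\mathcal{Z}^*$-satedness assumption is indispensable, because it is precisely what allows an arbitrary ergodic joining to be pushed down to the next level of the diagram and eventually to the $\mathbb{Z}$-system at the bottom. The subtle bookkeeping, as in Proposition \ref{Prop:ConstructionModel}, is to track the non-degeneracy of common invariant factors so that when two coordinates share a lower factor, that factor appears only once and not as independent copies in the inductive joining.
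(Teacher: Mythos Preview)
Your outline matches the paper's proof: induction up Austin's diagram from $\widehat{\Phi}_{[d]}$, forming each $Y_\epsilon$ as the support of the pushforward measure, using $\mathcal{Z}^*$-satedness to push an arbitrary ergodic joining on $Y_\epsilon$ down level by level until it is determined by the uniquely ergodic bottom system, and then invoking Weiss--Rosenthal to lift $Y_\epsilon$ to $\widehat{\Phi}_\epsilon$.

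Two technical points need repair. First, minimality of $Y_\epsilon$ does not follow from Proposition~\ref{Prop:MinimalQ}, which concerns the cube orbit-closure $\Q_{T_1,\ldots,T_d}(X)$ and has nothing to do with $Y_\epsilon$; rather, once unique ergodicity is established, minimality is immediate because any proper closed invariant subset of $Y_\epsilon$ would support an invariant measure with strictly smaller support than the defining pushforward. Second, the application of Theorem~\ref{Weiss} requires the intermediate system $Y_\epsilon=\bigvee_{j\notin\epsilon}\Phi_{\epsilon\cup\{j\}}$ to carry a \emph{free} action of the relevant group, and this fails at level $|\epsilon|=d-1$: there $Y_\epsilon=\widehat{\Phi}_{[d]}$, on which all the transformations $T_i$ coincide, so the induced $\Z^2$-action is not free. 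The paper handles this separately by passing to generators $\widetilde{T}_{i_0}=T_{i_0}$ and $\widetilde{T}_{j_0}=T_{i_0}^{-1}T_{j_0}$, so that $\Phi_{[d]}$ becomes the $\widetilde{T}_{j_0}$-invariant factor of $\Phi_{[d]\setminus\{j_0\}}$, and then applies Lemma~\ref{Lem:productfactor}. For the remaining levels $|\epsilon|\leq d-2$ you should also verify freeness of the action on $Y_\epsilon$; the paper does this by projecting a hypothetical fixed-point relation onto the different factors $\Phi_{\epsilon\cup\{j_0\}}$ and using that $[d]\setminus\epsilon$ has at least two elements.
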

\begin{proof}
The proof is similar to the one of Proposition \ref{Prop:ConstructionModel} and it goes by building strictly ergodic models on each level of the diagram inductively.
We build first the topological models for the levels $d$ and $d-1$ (which are special cases) and then we give an induction argument to build all the other levels. After building all the diagram, we check that it is indeed a uniquely ergodic model for the measurable diagram. 

{\bf Level $d$:} 
For the $d$-th level, $\Phi_{[d]}$ is just a free $\Z$-action (all transformations are the same) and we may use Theorem \ref{Weiss} to get a strictly ergodic model for it.

{\bf Level $d-1$:} 
Take $\# \epsilon=d-1$ and write $\epsilon=[d]\setminus \{j_0\}$ for $j_0\in [d]$. $\Phi_{\epsilon}$ can be viewed as a $\Z^2$ action generated by $T_{i_0}$ and $T_{j_0}$, where $i_0$ is any in $\epsilon$ (recall that all $T_i$, $i\in \epsilon$ are the same in $\Phi_{\epsilon}$).
Consider the following transformations: $\widetilde{T_{i_0}}=T_{i_0}$ and $\widetilde{T}_{j_0}=T_{i_0}^{-1}T_{j_0}$. These transformation generate the same action as $T_{i_0}$ and $T_{j_0}$, and their projections onto $\Phi_{[d]}$ are $T_{i_0}$ and the identity map, respectively. We can regard $\Phi_{[d]}$ as the $\widetilde{T}_{j_0}$ invariant factor of $\Phi_{[d]\setminus \{j_0\}}$. In this case, Lemma \ref{Lem:productfactor} gives us the existence of the desired topological model.

Now suppose we have built strictly ergodic topological models for all the factors of level $h+1$, where $1\leq h\leq d-1$. We want to build now strictly ergodic models for the factors of level $h$ together with continuous projection into their factors in the level $h+1$. 
Take an $\epsilon$, with $\epsilon=h$, $1\leq h\leq d-1$. $\Phi_{\epsilon}$ is an extension of its the factors in the level $h+1$, so it is an extension of \[ \bigvee_{j\notin \epsilon} \Phi_{\epsilon\cup \{j\}}.\]

{\bf Claim :} If $h+1\leq d$, then the induced $\mathbb{Z}^{d-h +1}$-action on $\bigvee_{j\notin \epsilon} \Phi_{\epsilon\cup \{j\}}$ is free.

To show this claim, take $i_0\in \epsilon$ and suppose that $T_{i_0}^{n_0}\prod_{j\notin \epsilon} T_{j}^{n_j}$ acts trivially on \[ \bigvee_{j\notin \epsilon} \
\Phi_{\epsilon\cup \{j\}}.\]

Projecting onto the different factors defining this joining, we have that $T_{i_0}^{n_0}\prod_{j\notin \epsilon} T_{j}^{n_j}$ acts trivially in $\Phi_{\epsilon\cup \{j_0\}}$ for $j_0\notin \epsilon$. But in this factor, we have that $T_{i_0}^{n_0}\prod_{j\notin \epsilon} T_{j}^{n_j}$ equals $T_{i_0}^{n_{i_0}+n_{j_0}}\prod_{j\notin \epsilon, j\neq j_0} T_{j}^{n_j}$ (because $T_{i_0}=T_{j_0}$ here). Since the $\mathbb{Z}^{d-(h+1)-1}$ induced action is free in $\Phi_{\epsilon\cup \{j_0\}}$, we have that $n_{i_0}+n_{j_0}=n_j=0$ for all $j\notin \epsilon \cup \{j_0\}$. From here, if the set $[d] \setminus \epsilon$ have at least two elements, then $n_{i_0}=n_j=0$ for all $j\notin \epsilon$. This proves the claim.

\

We first build a strictly ergodic model for  \[ \bigvee_{j\notin \epsilon} \Phi_{\epsilon\cup \{j\}}.\] 

Remark that by induction hypothesis, we have strictly ergodic models $\widehat{\Phi}_{\epsilon\cup \{j\}}$ for $j\notin \epsilon$. Let \[Y_{\epsilon}=\text{ supp }(\prod_{j\notin \epsilon} (\Phi_{\epsilon \cup \{j\}})_{\ast}\mu^F) \text{ in } \prod_{j\notin \epsilon} \widehat{\Phi}_{\epsilon \cup \{j\}}, \] where we abuse of notation and also write $\Phi_{\epsilon \cup \{j\}}$ for the factor map from $(X^d,\mu^F)$ to $(\Phi_{\epsilon \cup \{j\}})$.  This space has the property that the projection of the coordinates $\widehat{\Phi}_{\epsilon\cup \{j\}}$ and $\widehat{\Phi}_{\epsilon \cup \{j'\}}$ onto $\widehat{\Phi}_{\epsilon \cup \{j,j'\}}$ are the same (and similarly for further projections). We refer this property as {\it non degeneracy of invariant factors}.

We claim that $Y_{\epsilon}$ is a uniquely ergodic system (under the action of the associate projection of $H_d$). To see this,  let $\lambda$ be an invariant measure on $Y_{\epsilon}$. By unique ergodicity of $\widehat{\Phi}_{\epsilon \cup \{j\}}$, the projection of $\lambda$ onto $\Phi_{\epsilon \cup \{j\}}$ is the unique invariant measure $\mu_{\epsilon \cup \{j\}}$ on this system. Therefore $\lambda$ can be regarded as a joining (in the measure theoretical category) of the systems $\Phi_{\epsilon \cup \{j\}}$, $j\notin \epsilon$. 

We show that the $\lambda$ can be pushed to the further levels of invariant factors. This property is a consequence of the satedness condition we imposed and it appears basically in \cite{A}, Chapter 4. We give the first step and sketch the key arguments for completeness. 

 Pick one system of the joining, say $\Phi_{\epsilon\cup \{j_0\}}$, $j_0\notin J$. For any other $j\notin \epsilon$, we have that $\Phi_{\epsilon\cup \{j\}}$ is a factor of the $j$-th copy of $X$ inside $\mu^F$. If we regard $X_j$ as a a factor of $(X^d,\mu^F)$ and a $\Z^{d+1}$-action, we have that it belongs to the idempotent class $\mathcal{Z}_{\phi}^{j,d+1}$ (the $j$-th and $(d+1)$-th transformations are equal) and therefore the joining $\lambda$ can be viewed as a joining of $\Phi_{\epsilon \cup \{j_0\}}$ with  a system in the idempotent class $\bigvee_{j\notin \epsilon_0}\mathcal{Z}_{\phi}^{j,d+1}$. Arguing as in Proposition 4.2.6 in \cite{A}, we can conclude that this joining can be projected in the $\Phi_{\epsilon\cup \{j_0\}}$- part to \[\bigvee_{j\notin \epsilon, j\neq j_0} \Phi_{\epsilon\cup \{j_0,j\}}.\] A similar argument works for a joining of invariant factors of a given level. Iterating the argument, we have that $\lambda$ is in the end determined by its projection into the $\widehat{\Phi}_{[d]}$-factor, which is uniquely determined since this later system is uniquely ergodic.

Hence $Y_{\epsilon}$ is a strictly ergodic model for $\bigvee_{j\notin \epsilon} \Phi_{\epsilon\cup \{j\}}$ (which is free) and we may use relative Theorem \ref{Weiss} to get a strictly ergodic model $\widehat{\Phi}_{\epsilon}\to Y_{\epsilon}$ for the factor map $\Phi_{\epsilon}\to Y_{\epsilon}$.

Iterating the construction, we end up with  $d$ topological models for $X$ (one for each of its copies inside $(X^d,\mu^F))$, say $\widehat{X}_1,\ldots, \widehat{X}_d$ and a joining of these systems  that we will denote $N_d$, invariant under $H_d$ and with a unique invariant measure for which it is isomorphic to $(X^d,\mu^F)$. 
\end{proof}

In what follows, we continue with the precedent notations and let $N_d$ denotes  the topological model for $\mu^F$ given by Theorem \ref{Thm:ConstructionModel}. 
For  $i=1,\ldots,d$, let $\widehat{X}_i$ denote the  topological model for the $i$-th copy of $X$ inside $\mu^F$ and $\phi_i\colon X\to \widehat{X}_i$ the measure theoretical isomorphism. Denote $\vec{\phi}=\phi_1\times \cdots \times \phi_d \colon X^d \to \widehat{X}_1\times \cdots \widehat{X}_d$. 
Theorem \ref{Thm:ConstructionModel} says (by construction) that $N_d$ is the support of $\vec{\phi}_{\ast}(\mu^F)$ on $\widehat{X}_1\times \cdots \times \widehat{X}_d$.

In order to deduce pointwise convergence results, we need to determine if images of diagonal points are in $N_d$. The following result is fundamental.

\begin{thm}[Theorem 4.0.2, \cite{A}]\label{Recurrence:Austin}
If $\mu(A_1\cap A_2\cdots \cap A_d)>0$, then $\mu^F(A_1\times A_2\cdots \times A_d)>0$. 
\end{thm}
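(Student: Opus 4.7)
The plan is to reduce the statement to the Furstenberg--Katznelson multiple recurrence theorem for commuting measure preserving transformations. First, I would unwind the definition of $\mu^F$: applying it to the tensor product of indicator functions $f_i=\mathbf{1}_{A_i}$ gives
\[\mu^F(A_1\times\cdots\times A_d)=\lim_{N\to\infty}\frac{1}{N}\sum_{n=0}^{N-1}\mu\bigl(T_1^{-n}A_1\cap T_2^{-n}A_2\cap\cdots\cap T_d^{-n}A_d\bigr),\]
where the existence of the limit (and not merely a $\liminf$) is Tao's $L^2$-convergence theorem for multiple ergodic averages along commuting transformations, already cited at the start of Section \ref{Sec:MultiplesAverages}. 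So the problem is reduced to producing a positive lower bound for this Ces\`aro average.

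Next, I would set $A:=A_1\cap A_2\cap\cdots\cap A_d$, which by hypothesis satisfies $\mu(A)>0$. Since $A\subseteq A_i$ for each $i$, one has the containment $T_i^{-n}A\subseteq T_i^{-n}A_i$ for every $n$, and hence the pointwise inequality of indicator functions
\[\mathbf{1}_{T_1^{-n}A}\cdot\mathbf{1}_{T_2^{-n}A}\cdots\mathbf{1}_{T_d^{-n}A}\;\leq\;\mathbf{1}_{T_1^{-n}A_1}\cdot\mathbf{1}_{T_2^{-n}A_2}\cdots\mathbf{1}_{T_d^{-n}A_d}.\]
Integrating against $\mu$ and averaging in $n$ yields
\[\mu^F(A_1\times\cdots\times A_d)\;\geq\;\liminf_{N\to\infty}\frac{1}{N}\sum_{n=0}^{N-1}\mu\bigl(T_1^{-n}A\cap T_2^{-n}A\cap\cdots\cap T_d^{-n}A\bigr).\]

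To conclude, I would invoke the Furstenberg--Katznelson multiple recurrence theorem: for any $d$ commuting measure preserving transformations of a probability space and any set $A$ of positive measure, the Ces\`aro average on the right-hand side is strictly positive. The main obstacle here is really just to cite this deep result; everything else is a one-line monotonicity coming from the inclusions $A\subseteq A_i$. In particular, none of the topological modelling or satedness machinery developed earlier in the paper is needed for this specific statement, even though those ideas are central to Austin's broader treatment of the self-joining $\mu^F$ in \cite{A}.
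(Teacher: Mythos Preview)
Your argument is correct. The only cosmetic point is that the Furstenberg--Katznelson theorem is usually stated with an extra copy of $A$ at time $0$, i.e.\ with $\mu\bigl(A\cap T_1^{-n}A\cap\cdots\cap T_d^{-n}A\bigr)$; but your quantity dominates this (or, after applying $T_1^n$, equals the FK expression for the $d-1$ commuting maps $T_1^{-1}T_j$), so nothing is missing.

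As for comparison with the paper: the paper does not prove this theorem at all --- it is quoted verbatim from Austin \cite{A} and used as a black box to obtain Corollary~\ref{Lemma:Support}. Austin's own proof in \cite{A} proceeds quite differently from yours: he deduces multiple recurrence \emph{from} the structure of the self-joining $\mu^F$ after passing to a sated extension, rather than importing Furstenberg--Katznelson as an external input. In other words, Austin's programme reproves Furstenberg--Katznelson via satedness, whereas you are (legitimately, and more economically for the purposes of this paper) treating Furstenberg--Katznelson as already known and observing that the statement about $\mu^F$ is an immediate reformulation of it. Your route is shorter and entirely self-contained modulo the classical recurrence theorem; Austin's route is longer but is the conceptual point of \cite{A}, which is why the present paper cites rather than reproduces it.
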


\begin{cor} \label{Lemma:Support}
$\vec{\phi}(x,\ldots,x) \in N_d$ for $\mu$-a.e $x\in X$.
\end{cor}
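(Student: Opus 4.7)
The plan is to argue by contradiction using Austin's recurrence result (Theorem \ref{Recurrence:Austin}) together with second countability of the product of the topological models.

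Suppose for contradiction that there is a measurable set $B\subseteq X$ with $\mu(B)>0$ such that $\vec{\phi}(x,\ldots,x)\notin N_d$ for every $x\in B$. Since the complement $N_d^c$ is open in $\widehat{X}_1\times\cdots\times\widehat{X}_d$ and since this product is a compact metric space, it admits a countable basis of product open sets $U_1\times\cdots\times U_d$ with $U_i\subseteq \widehat{X}_i$ open. For each $x\in B$, the point $\vec{\phi}(x,\ldots,x)$ lies in at least one basic open set disjoint from $N_d$. Hence by a countable union argument, there exists a basic product open set $U_1\times\cdots\times U_d$ with $(U_1\times\cdots\times U_d)\cap N_d=\emptyset$ such that the set
\[
B' = \{x\in X \colon \phi_i(x)\in U_i \text{ for all } i=1,\ldots,d\} = \bigcap_{i=1}^d \phi_i^{-1}(U_i)
\]
has positive $\mu$-measure.

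Now I set $A_i := \phi_i^{-1}(U_i)$, so that $A_i$ is a measurable subset of $X$ (since each $\phi_i$ is measurable) and $\mu(A_1\cap\cdots\cap A_d)\geq \mu(B')>0$. Theorem \ref{Recurrence:Austin} then yields $\mu^F(A_1\times\cdots\times A_d)>0$. On the other hand, the definition of $\vec\phi$ gives $\vec\phi(A_1\times\cdots\times A_d)\subseteq U_1\times\cdots\times U_d$, so
\[
\vec{\phi}_{\ast}(\mu^F)(U_1\times\cdots\times U_d) \;\geq\; \mu^F(A_1\times\cdots\times A_d) \;>\; 0.
\]
This contradicts the fact that $U_1\times\cdots\times U_d$ is disjoint from $N_d=\mathrm{supp}(\vec{\phi}_{\ast}(\mu^F))$, finishing the proof.

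The main (and essentially only) non-routine input is Austin's recurrence theorem, which translates the positivity of the diagonal intersection $\mu(A_1\cap\cdots\cap A_d)$ into positivity of $\mu^F(A_1\times\cdots\times A_d)$; everything else is a standard support/second-countability argument. The rest is bookkeeping to ensure that the measurable isomorphisms $\phi_i$ pull back the basic open product sets of the topological model to honest measurable subsets of $X$, which is automatic from the construction in Theorem \ref{Thm:ConstructionModel}.
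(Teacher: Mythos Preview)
Your proof is correct and is in fact cleaner than the paper's own argument. Both proofs hinge on Austin's recurrence theorem (Theorem \ref{Recurrence:Austin}) and the identification of $N_d$ with $\mathrm{supp}(\vec{\phi}_{\ast}\mu^F)$, but you and the authors organize the reduction differently.

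The paper constructs explicitly a full-measure set on which the conclusion holds: it invokes Lusin's theorem to find closed sets $F_n$ with $\mu(F_n)\geq 1-2^{-n}$ on which every $\phi_i$ is continuous, passes to $H=\liminf_n \mathrm{supp}(\mu|_{F_n})$, and then for each $x\in H$ verifies directly that every product neighbourhood of $\vec{\phi}(x,\ldots,x)$ has positive $\vec{\phi}_{\ast}\mu^F$-measure by intersecting with a small ball and invoking Theorem \ref{Recurrence:Austin}. This requires a Polish-space model for $X$ (to speak of balls and Lusin's theorem) and a Borel--Cantelli step.

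Your argument bypasses all of this: using second countability of the compact metric product $\widehat{X}_1\times\cdots\times\widehat{X}_d$, you cover $N_d^c$ by countably many basic product open sets, and if the bad set had positive measure you would find a single basic set $U_1\times\cdots\times U_d$ disjoint from $N_d$ whose $\vec{\phi}$-preimage along the diagonal has positive $\mu$-measure; Theorem \ref{Recurrence:Austin} then immediately gives $\vec{\phi}_{\ast}\mu^F(U_1\times\cdots\times U_d)>0$, contradicting disjointness from the support. Note incidentally that $\vec{\phi}^{-1}(U_1\times\cdots\times U_d)=A_1\times\cdots\times A_d$ exactly, so your inequality is an equality. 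Your route is more elementary (no Lusin, no topology on $X$ needed) and makes the logical structure---support condition reduced via second countability to a single product rectangle, then Austin---completely transparent.
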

\begin{proof}
 For $x\in X$, to prove that $\vec{\phi}(x,\ldots,x)\in N_d$, it suffices to show that $\vec{\phi}\mu_F(U_1\times \cdots \times U_d)=\mu_F(\vec{\phi}^{-1} U_1\times \cdots \times U_d)$ is positive for every neighbourhood $U_1\times\cdots\times U_d$ of $\vec{\phi}(x,\ldots,x)$. By Theorem \ref{Recurrence:Austin}, it suffices to show that $\mu(\phi_{1}^{-1}U_1 \cap \cdots \cap \phi_d^{-1}U_d)>0$.

Recall that the maps $\phi_i$ are (only) measurable. By Lusin's Theorem, we can find a sequence of closed sets $\{F_n\}_{n\in \N}$ such that $\phi_i$ restricted to $F
_n$ is continuous for all $i=1,\ldots,d$, and that $\mu(F_n)\geq 1-2^{-n}$. Now consider the sequence $H_n=\text{supp} (\mu_{F_n})$, where $\mu_{F_n}$ is the measure $\mu$ restricted to $F_n$. Then $\mu(H_n)=\mu( F_n)\geq 1-2^{-n}$. If $H=\liminf H_n$ then by the Borel-Cantelli Lemma, we have that $\mu(H)=1$.
Let $x\in H$ and let $U_1\times U_2\cdots \times U_d$ be an open neighbourhood of $\vec{\phi}(x,\ldots,x)$.
Pick $n$ large enough so that $x\in H_n$. Since $\phi_i$ restricted to $H_n$ is continuous, we have that there exists $r>0$ such that $B(x,r)\cap H_n \subseteq \phi_i^{-1}(U_i)\cap H_n$ for all $i=1,\ldots,d$. Since $x$ is in the support of $\mu$ restricted to $H_n$, we get that
\[ \mu(\phi_1^{-1}U_1 \cap \phi_2^{-1}U_2 \cdots \cap \phi_d^{-1}U_d))\geq \mu(B(x,r)\cap H_n) >0.\]

Hence $\mu_F(\phi_1^{-1}U_1 \times \phi_2^{-1}U_2 \times \cap \phi_d^{-1}U_d)=\vec{\phi}_{\ast}\mu_F(U_1\times \cdots \times U_d)>0$. That is, $\vec{\phi}(x,\ldots,x)\in N_d$ for $x\in H$. 
\end{proof}

\subsection{Applications}\label{Sec:app2}

In this section we derive Theorem \ref{THM:averagedmultiple} as a consequence of the construction of the model in the previous section.

\begin{proof}[Proof of Theorem \ref{THM:averagedmultiple}]
	Since the projection of the Fustenberg self-joining of a system to a factor equals to the Furstenberg-Ryzhikov self-joining of the factor system, 
by Theorem \ref{Thm:ext}, we may assume that $X$ is $\mathcal{Z}^{\ast}$-sated. 

Let $f_1,\ldots, f_d$ be measurable bounded functions on $X$ and $\phi_i:X\to \widehat{X}_i$ be the measure theoretical isomorphism given by Theorem \ref{Thm:ConstructionModel}, where $\widehat{\mu}_i$ is the measure on $\widehat{X}_i$. Fix $\epsilon >0$. We can find a continuous function $\widehat{f}_i$ on $\widehat{X}_i$ such that $\|\widehat{f}_i-f_i\circ \phi_i^{-1}\|_{L^1(\widehat{\mu}_i)}\leq \epsilon$. 
By Corollary \ref{Lemma:Support}, $\vec{\phi}(x,\ldots,x)\in N_d$ for $\mu$-a.e $x\in X$. 
Thus for $\mu$-a.e $x\in X$, the average $S_N(\widehat{f}_1,\ldots,\widehat{f}_d,x)$ defined as

\[\frac{1}{N^{d+1}} \sum_{0\leq n_1,\ldots,n_d \leq N-1} \sum_{0\leq n\leq N-1} \widehat{f}_1(T_1^n \prod_{j=1}^d T_j^{n_j} \phi_1 x)\widehat{f}_2(T_2^n \prod_{j=1}^d T_j^{n_j}\phi_2 x)\cdots \widehat{f}_d(T_d^n \prod_{j=1}^d T_j^{n_j} \phi_d x) \]
converges to $\int \widehat{f_1}\otimes\widehat{f}_2\cdots\otimes \widehat{f}_d d(\vec{\phi}_{\ast}\mu^F)$ as $N\to\infty$.

This follows from the fact that any weak limit of 
\[\frac{1}{N^{d+1}} \sum_{0\leq n_1,\ldots,n_d \leq N-1} \sum_{0\leq n\leq N-1} (T_1^n \prod_{j=1}^d T_j^{n_j}, T_2^n \prod_{j=1}^d T_j^{n_j},\cdots ,T_d^n \prod_{j=1}^d T_j^{n_j} )\delta_{(\phi_1 x,\ldots,\phi_d x)} \]
is $H_d$ invariant and thus equal to $\vec{\phi}_{\ast}\mu^F$.

Let $S_n(f_1,\ldots,f_d,x)$ denote the average
\[ \frac{1}{N^{d+1}} \sum_{0\leq n_1,\ldots,n_d \leq N-1} \sum_{0\leq n\leq N-1} f_1(T_1^n \prod_{j=1}^d T_j^{n_j}x)f_2(T_2^n \prod_{j=1}^d T_j^{n_j}x)\cdots f_d(T_d^n \prod_{j=1}^d T_j^{n_j}x) \]

Using the telescoping inequality, we have that for $\mu$-a.e. $x\in X$, 
\begin{equation}
\begin{split}  \limsup_{N\to \infty} \left|S_N(f_1,\ldots,f_d,x)-\widehat{S}_N(\widehat{f}_1,\ldots,\widehat{f}_d,x)\right |
\leq  \sum_{i=1}^{d} \|f_i\circ \phi_i^{-1}-\widehat{f}_1\|_{L^1(\widehat{\mu}_i)}.
\end{split}
\end{equation}
Thus,

\begin{align*}
& \left \vert S_N(f_1,\ldots,f_d,x)-\int f_1\otimes\cdots \otimes f_d d\mu^F \right \vert \\
\leq & \left \vert S_N(f_1,\ldots,f_d,x)-\widehat{S}_N(\widehat{f}_1,\ldots,\widehat{f}_d,x) \right \vert \\ + & \left \vert \widehat{S}_N(\widehat{f}_1,\ldots,\widehat{f}_d,x)- \int  \widehat{f_1}\otimes\widehat{f}_2\cdots\otimes \widehat{f}_d d(\vec{\phi}_{\ast}\mu^F) \right \vert \\
+ & \left \vert \int (f_1\circ \phi_1^{-1}\otimes f_1 \circ \phi_2^{-1} \cdots \otimes f_d\circ \phi_d^{-1} - \widehat{f_1}\otimes\widehat{f}_2\cdots\otimes \widehat{f}_d) d(\vec{\phi}_{\ast}\mu^F) \right \vert.
\end{align*}  
 
Using again the telescoping inequality, we get that 
$$\limsup_{N\to \infty}  \left \vert S_N(f_1,\ldots,f_d,x)-\int f_1\otimes\cdots \otimes f_d d\mu^F \right \vert$$ is bounded by $$2\sum_{i=1}^{d}\|f_i\circ \phi_i^{-1}-\widehat{f}_i\|_{L^1(\widehat{\mu}_i)}\leq 2d \epsilon.$$
Since $\epsilon$ is arbitrary, we get the result.
 \end{proof}

\section{An expression for the $L^2$-limit of multiple ergodic averages and pointwise convergence for distal systems} \label{Sec:AverageDistal}
In this section, we prove Theorem \ref{THM:ergodicmeasurelimit} and Theorem \ref{THM:Averagesdistal}. Our results rely on the study of the Furstenberg-Ryzhikov self-joining of a suitable extension system, and the study of invariant $\sigma$-algebras on it. 

In this section, for convenience we use the letters $S_1,\ldots,S_d$ to name the transformations of a space $(X,\mathcal{X},\mu)$. The letter $T$ will be used to denote $T_1=S_1$ and $T_i=S_1^{-1}S_i$ for $i\geq 2$.

Let $(X,\mu,S_1,\ldots,S_d)$ be a system with commuting transformations. To settle notation, in all what follows, $\mu^{F}$ is the Furstenberg joining of $(S_{1},\dots,S_{d})$,
$p\colon X^{d}\to X^{d-1}$ is the projection onto the last $d-1$ coordinates, and denote $\nu^{F}=p_{*}(\mu^{F})$.
Recall from Remark \ref{rem:proj} that $\nu^{F}$ is the Furstenberg-Ryzhikov self-joining associated to $(T_2,\ldots,T_d)$.

In all what follows, we assume that $(X,\mu,S_1,\ldots,S_d)$ is  $\mathcal{Z}^{\ast}$-sated (see Definition \ref{zast}). As in the previous section, this condition give us a good picture of the Furstenberg-Ryzhikov self-joining but also implies have the {\em magic} conditions that relate the Host's seminorms with invariant $\sigma$-algebras. To be more specific, by Lemma \ref{lem:SatedAndMagic} the condition of being sated with respect to the idempotent class
\[ \bigvee_{i\in \epsilon} Z_{\phi}^{\{1,i\}} \]
 (this condition which is included in being $\mathcal{Z}^{\ast}$-sated) implies that 

  \[ \mathbb{E}\Bigl (f \vert \bigvee_{i\in \epsilon} \Phi_{\{1,i\}} \Bigr )= \mathbb{E}\Bigl (f \vert \bigvee_{i\in \epsilon} I_{T_{i}}\Bigr )=0 \text{ if and only if } \normm{f}_{\mu,B}=0,\]
where $B=\{T_i: i\in \epsilon\}$ (recall that $\Phi_{I}(X), I\subseteq [d]$ is the factor of $X$ associated to the $\sigma$-algebra invariant under all the transformations $S_i^{-1}S_j$, for $i,j \in I$, $i\neq j$).

\subsection{Description of the $\sigma$-algebra of $(T_2 \times \ldots \times T_d)$-invariant sets} 
Theorem \ref{THM:ergodicmeasurelimit} follows from the study of the $\sigma$-algebra of $(T_2 \times \ldots \times T_d)$-invariant sets of the projections of $\mu^F$ onto the last $d-1$ coordinates.

\begin{thm}\label{Thm:key3}
	Let $d\geq 3$ and $(X,\mu,S_{1},\dots,S_{d})$ be  a $\mathcal{Z}^{\ast}$-sated system with commuting transformations. Then the spaces \[(X_{{2}}\vee \dots\vee X_{{d}},\I_{(T_2\times\dots\times T_{d})},\nu^{F})\] and \[(\bigvee_{i\in\{2,\dots,d\}}\Phi_{\{1,i\}},\nu^{F}).\]
are isomorphic (as factors of $(X^d,\mu^F)$).	
\end{thm}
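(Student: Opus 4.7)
The plan is to identify the two sub-$\sigma$-algebras $\I_{T_2\times\cdots\times T_d}$ and $\bigvee_{i=2}^d \Phi_{\{1,i\}}$ on $X_2 \vee \cdots \vee X_d$ modulo $\nu^F$-null sets; this yields the desired isomorphism of factors. One inclusion is immediate: the factor $\Phi_{\{1,i\}} = \I_{T_i}$ read in the $i$-th coordinate is preserved by $T_i$, so its pullback to $X_2\vee\cdots\vee X_d$ is preserved by $T_2\times\cdots\times T_d$, giving $\bigvee_{i=2}^d \Phi_{\{1,i\}} \subseteq \I_{T_2\times\cdots\times T_d}$.

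For the reverse inclusion I would take a $(T_2\times\cdots\times T_d)$-invariant $h\in L^2(\nu^F)$ and prove $h=\E(h\mid\bigvee_{i=2}^d \Phi_{\{1,i\}})$ by testing against tensor functions $F=f_2\otimes\cdots\otimes f_d$, $f_i\in L^\infty(\mu)$, which span a dense subspace of $L^2(\nu^F)$. Invariance of $h$ allows the insertion of a Cesaro average:
\[ \int h \cdot F \, d\nu^F \;=\; \int h \cdot \prod_{i=2}^d \Bigl(\frac{1}{N}\sum_{n_i=0}^{N-1} T_i^{n_i}f_i\Bigr)(x_i)\, d\nu^F. \]
The mean ergodic theorem on each $(X,\mu,T_i)$ gives $\frac{1}{N}\sum_{n_i} T_i^{n_i} f_i \to f_i':=\E(f_i\mid \Phi_{\{1,i\}})$ in $L^2(\mu)$, hence in every $L^p(\mu)$, $p<\infty$, by the bound $\|f_i\|_\infty$. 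Since each marginal of $\nu^F$ equals $\mu$, a telescoping decomposition of the product combined with H\"older's inequality (with exponent $2(d-1)$) yields $L^2(\nu^F)$-convergence of the full product to $\bigotimes_{i=2}^d f_i'$. Taking $N\to\infty$ produces
\[ \int h \cdot F \, d\nu^F \;=\; \int h \cdot \bigotimes_{i=2}^d f_i' \, d\nu^F \;=\; \int \E\Bigl(h \,\Big\vert\, \bigvee_{i=2}^d \Phi_{\{1,i\}}\Bigr) \cdot \bigotimes_{i=2}^d f_i' \, d\nu^F, \]
the last equality because $\bigotimes_i f_i'$ is $\bigvee_{i=2}^d \Phi_{\{1,i\}}$-measurable. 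Running the same chain of equalities with $\E(h\mid \bigvee \Phi_{\{1,i\}})$ in place of $h$ (which is $(T_2\times\cdots\times T_d)$-invariant by the easy inclusion) would give $\int h \cdot F\, d\nu^F = \int \E(h\mid\bigvee \Phi_{\{1,i\}})\cdot F\, d\nu^F$ for every tensor $F$; density then forces $h = \E(h\mid \bigvee \Phi_{\{1,i\}})$ in $L^2(\nu^F)$, as desired.

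The main technical step I expect is the $L^2(\nu^F)$-convergence of the product of Cesaro averages. The crucial structural input is that $\nu^F$ has marginal $\mu$ on every coordinate, so that H\"older plus the $L^\infty$ bounds on the $f_i$ reduce matters to $L^p(\mu)$-convergence of a single factor. Notice that the argument does not invoke the $\mathcal{Z}^\ast$-satedness of $X$ directly; that hypothesis is the running assumption of the section and is used to place the $\Phi_{\{1,i\}}$ factors coherently inside the diagram built in Section \ref{Sec:Fj2}.
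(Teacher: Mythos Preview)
Your argument contains a genuine gap at the very first displayed identity. Invariance of $h$ under the \emph{single} transformation $T_2\times\cdots\times T_d$ (together with invariance of $\nu^F$ under the same) gives, for each $n$,
\[
\int h\cdot F\,d\nu^F=\int h\cdot\prod_{i=2}^d f_i(T_i^{n}x_i)\,d\nu^F,
\]
and hence after averaging
\[
\int h\cdot F\,d\nu^F=\int h\cdot\frac{1}{N}\sum_{n=0}^{N-1}\prod_{i=2}^d f_i(T_i^{n}x_i)\,d\nu^F,
\]
with a \emph{single} parameter $n$. What you wrote instead is a product of \emph{independent} Ces\`aro averages $\prod_i\bigl(\tfrac{1}{N}\sum_{n_i}T_i^{n_i}f_i\bigr)$. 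That substitution would require $h$ (and $\nu^F$) to be invariant under each coordinate transformation $\id\times\cdots\times T_i\times\cdots\times\id$ separately, which is not assumed and in general false: $\nu^F$ is only invariant under the diagonal transformations and under $T_2\times\cdots\times T_d$. The single-parameter average does not converge to $\bigotimes_i\E(f_i\mid\I_{T_i})$; its $L^2$-limit is precisely $\E(F\mid\I_{T_2\times\cdots\times T_d})$, so the computation becomes circular.

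This is why the $\mathcal{Z}^{\ast}$-satedness hypothesis is genuinely needed, contrary to your closing remark. The paper's route is to control $\E(F\mid\I_{T_2\times\cdots\times T_d})$ by iterated van der Corput, yielding the seminorm bound of Lemma~\ref{vdc},
\[
\Bigl\Vert\E\Bigl(\bigotimes_i f_i\,\Big\vert\,\I_{T_2\times\cdots\times T_d}\Bigr)\Bigr\Vert_{L^2(\nu^F)}\le C\min_i\normm{f_i}_{\mu,\{T_i,\,T_iT_j^{-1}:j\neq i\}},
\]
and then to invoke the magic property (a consequence of $\mathcal{Z}^{\ast}$-satedness via Lemma~\ref{lem:SatedAndMagic}) to turn vanishing of these seminorms into vanishing of conditional expectations onto joins of the $\Phi$-factors. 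Lemma~\ref{nvdc} iterates this descent through the levels $\Phi_I$, $|I|=k\to k+1$, and Theorem~\ref{Thm:key3} follows. None of this machinery can be bypassed by a direct averaging argument of the type you propose.
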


\begin{rem} $X_2, \ldots, X_d$ are the $d-1$ last coordinates in $(X^d,\mu^F)$ and we think of everything as a sub $\sigma$-algebra of $(X^d,\mu^F)$. For convenience of notation (for what follows) we use the notation  $X_{{2}}\vee \dots\vee X_{{d}}$ instead of $X_2\times\dots \times X_d$.
\end{rem}

We start with some lemmas.
To avoid confusion with the transformations $S_1,\ldots,S_d$ and $T_2,\ldots,T_d$,  we use the notations $R_1,\ldots,R_d$ in the lemmas:

\begin{lem}\label{vdc}
	Let $(X,\mu,R_{1},\dots,R_{d})$ be a measure preserving system with commuting transformations. For every $f_{1},\dots,f_{d}\in L^{\infty}(\mu)$, we have
	\begin{equation}\nonumber
		\begin{split}
			\Bigl \Vert\E \Bigl(\bigotimes_{i=1}^{d}f_{i} \vert\I_{(R_1\times\dots\times R_{d})}\Bigr)\Bigr \Vert_{L^{2}(\nu)}
			\leq C\min_{1\leq i\leq d}\normm{f_{i}}_{\mu,I_{i}}
		\end{split}
	\end{equation}
	for some universal constant $C$ depending only on $d$, where $\nu$ is any d-fold self-joining of $(X,\mu,R_{1},\dots,R_{d})$ and $I_{i}=\{R_{i},R_{i}R^{-1}_{j}: j\neq i\}$ for $1\leq i\leq d$.
	
	As a consequence, for any factors $Y_{i},1\leq i\leq d$ of $X$, the space 
	\[(Y_{1}\vee \dots \vee Y_{d},\I_{(R_1\times\dots\times R_{d})},\nu)\] is isomorphic to \[(Z^{I_{1}}(Y_{1})\vee \dots\vee Z^{I_{d}}(Y_{d}),\I_{(R_1\times\dots\times R_{d})},\nu).\]
\end{lem}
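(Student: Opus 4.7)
The plan is to reduce the claim to a Host-seminorm bound on a single distinguished function $f_1$ (the other $d-1$ cases being symmetric) by iterating the Van der Corput lemma, at each step exploiting one of the diagonal invariances $R_j^{[d]}$ of the self-joining $\nu$. I will assume that $\tau:=R_1\times\cdots\times R_d$ preserves $\nu$, as is the case in the intended applications (e.g.\ for $\nu=\nu^{F}$ arising from a Furstenberg--Ryzhikov joining); otherwise, $\mathbb{E}(\cdot\mid\I_{\tau})$ has to be interpreted as the $L^{2}(\nu)$-projection onto the $\tau$-invariant subspace and the argument adapts.

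First I would use the mean ergodic theorem to express $F:=\mathbb{E}(\bigotimes_i f_i\mid\I_{\tau})$ as the $L^{2}(\nu)$-limit of the Birkhoff averages $\frac{1}{N}\sum_{n<N}\bigotimes_i (f_i\circ R_i^n)$. A first application of Van der Corput, combined with $\tau$-invariance to absorb the outer variable, yields
\[\|F\|_{L^2(\nu)}^{2}\leq C\,\limsup_{H\to\infty}\frac{1}{H}\sum_{h=1}^{H}\Bigl|\int\bigotimes_i\bigl(f_i\cdot\overline{f_i\circ R_i^{-h}}\bigr)\,d\nu\Bigr|.\]
Before iterating, I would use the $R_1^{[d]}$-invariance of $\nu$ (change of variables $x_i\mapsto R_1^{-h}x_i$) to rewrite the inner integral so that the $f_1$-factor becomes $f_1(x_1)\overline{f_1(R_1^{-h}x_1)}$, while the factors on $X_i$, $i\geq 2$, become $f_i((R_iR_1^{-1})^hx_i)\overline{f_i(R_1^{-h}x_i)}$. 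Every shift acting on $f_1$ is now a power of $R_1^{-1}$.

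Next I would iterate this procedure $d-1$ more times. Each new Van der Corput step introduces a fresh shift parameter $h_k$, and at each step I would use the appropriate $R_{j_k}^{[d]}$-invariance of $\nu$ to force the new shift on the $f_1$-factor to land on a generator of $I_1=\{R_1,R_1R_2^{-1},\ldots,R_1R_d^{-1}\}$. After the $d$-th iteration, the contributions of $f_i$ for $i\neq 1$ can be bounded uniformly by $\|f_i\|_\infty$ (via Cauchy--Schwarz inside the joining), and the surviving integral over $(X,\mu)$ is exactly the $2^d$-fold Host cube integral
\[\int\bigotimes_{\epsilon\in V_d} f_1\, d\mu_{I_1}=\normm{f_1}_{\mu,I_1}^{2^d},\]
giving the desired bound $\|F\|_{L^2(\nu)}\leq C\normm{f_1}_{\mu,I_1}$. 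I expect the main obstacle to be the bookkeeping: at each Van der Corput step one must carefully choose the correct diagonal invariance so that the new shift on $f_1$ falls on a prescribed generator of $I_1$, without disturbing the uniform bounds on the other factors or generating shifts outside $I_1$.

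For the consequence, I would invoke the $\mathcal{Z}^{\ast}$-satedness assumption via Lemma \ref{lem:SatedAndMagic}, which makes $\normm{\cdot}_{\mu,I_i}$ magic for $Z^{I_i}$, i.e.\ $\normm{f}_{\mu,I_i}=0$ iff $\mathbb{E}(f\mid Z^{I_i}(X))=0$. For each $f_i\in L^\infty(Y_i)$ I would decompose $f_i=\mathbb{E}(f_i\mid Z^{I_i}(Y_i))+\delta_i$ and expand $\bigotimes_i f_i$. By the first part every summand containing some $\delta_i$ has vanishing $\I_{\tau}$-conditional expectation, and a density argument (linear combinations of tensor products are dense in $L^{2}(\bigvee_i Y_i,\nu)$) then gives that $\tau$-invariant functions on $\bigvee_i Y_i$ coincide with those on $\bigvee_i Z^{I_i}(Y_i)$, establishing the isomorphism.
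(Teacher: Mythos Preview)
Your proposal has a genuine gap in the iteration step. After the first van der Corput application you absorb the averaging variable $n$ via $\tau$-invariance, arriving at
\[
\|F\|_{L^2(\nu)}^{2}\le C\,\limsup_{H\to\infty}\frac{1}{H}\sum_{h=1}^{H}\Bigl|\int\bigotimes_{i}\bigl(f_i\cdot\overline{f_i\circ R_i^{-h}}\bigr)\,d\nu\Bigr|.
\]
At this point the summand is a scalar; there is no remaining Hilbert-space-valued average on which a second van der Corput can act. Your claim that ``each new Van der Corput step introduces a fresh shift parameter $h_k$'' is the problem: van der Corput transfers an \emph{existing} average into a correlation, it does not manufacture a new averaging variable. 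Once $n$ is gone there is nothing for the next VdC to bite on, so the ``iterate $d-1$ more times'' step cannot be carried out as written. (Your change-of-variables computation in that paragraph is also off: after $x_i\mapsto R_1^{-h}x_i$ the $f_1$-factor does not stay equal to $f_1(x_1)\overline{f_1(R_1^{-h}x_1)}$.)

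The paper avoids this by arguing by induction on $d$. After one van der Corput it does \emph{not} use $\tau$-invariance to kill $n$; it uses only the $R_1^{[d]}$-invariance of $\nu$ to rewrite the correlation as
\[
\int (f_1\cdot f_1\circ R_1^{h})\otimes\bigotimes_{i\ge 2}\bigl(f_i\cdot f_i\circ R_i^{h}\bigr)\circ(R_1^{-1}R_i)^{\,n}\,d\nu,
\]
so that the first factor no longer depends on $n$ while the remaining $d-1$ factors still carry the $n$-average. One Cauchy--Schwarz splits off $\|f_1\cdot f_1\circ R_1^{h}\|_{L^2(\mu)}$, and the other piece is precisely the $L^2(\nu')$-norm of a Birkhoff average for the $d-1$ transformations $R_1^{-1}R_i$ on the projection $\nu'$ of $\nu$ to the last $d-1$ coordinates. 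The induction hypothesis bounds this by $\min_{2\le i\le d}\normm{f_i\circ R_i^{h}\cdot f_i}_{\mu,I_i'}$ with $I_i'=\{R_i^{-1}R_j:j\ne i\}$, and the final average over $h$ upgrades $I_i'$ to $I_i=\{R_i\}\cup I_i'$ via the recursive definition of the Host seminorm. Symmetry in the index singled out gives the full minimum. Your scheme can be repaired by following exactly this pattern: keep $n$ alive, peel off one coordinate with Cauchy--Schwarz, and recurse rather than trying to iterate VdC directly.

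For the consequence, your appeal to satedness/magic via Lemma~\ref{lem:SatedAndMagic} is the right mechanism and matches how the paper actually deploys this lemma downstream (in Lemma~\ref{nvdc}): one needs $\normm{f_i-\E(f_i\mid Z^{I_i})}_{\mu,I_i}=0$, which is the magic direction and does not follow from the seminorm bound alone.
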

\begin{proof}
The proof is done by induction on $d$ and is similar to Theorem 12.1 in \cite{HK05}. We assume the result is true for all $(d-1)$-fold self-joining of $X$. Let $f_1,\ldots,f_d$ be bounded functions. We may assume that they are bounded by $1$ in $L^{\infty}(\mu)$-norm. By the Ergodic Theorem, we have that 
\[	\Bigl \Vert\E \Bigl(\bigotimes_{i=1}^{d}f_{i} \vert\I_{(R_1\times\dots\times R_{d})}\Bigr)\Bigr \Vert_{L^{2}(\nu)}= \Bigl \Vert \lim_{N\to \infty} \frac{1}{N}\sum_{n=0}^{N-1} \bigotimes_{i=1}^{d}f_{i}\circ R_i^n  \Bigr \Vert_{L^{2}(\nu)} \]

Applying van der Corput and Cauchy-Schwartz inequalities, we have that the right hand side is bounded by 	
\begin{equation*}
\begin{split}
&\limsup_{H\to \infty} \frac{1}{H} \sum_{h=0}^{H-1}   \limsup_{N\to \infty} \Bigl \vert \frac{1}{N} \sum_{n=0}^{N-1} \int \bigotimes_{i=1}^{d}f_{i}\circ R_i^n \cdot \bigotimes_{i=1}^{d}f_{i}\circ R_i^{h+n} d\nu \Bigr \vert  \\
= & \limsup_{H\to \infty} \frac{1}{H} \sum_{h=0}^{H-1} \limsup_{N\to \infty}\Bigl \vert \frac{1}{N} \sum_{n=0}^{N-1} \int f_1\cdot f_1 R_1^h \otimes \bigotimes_{i=2}^{d}(f_{i}\circ R_i^h  \cdot f_i)\circ (R_1^{-1}R_i)^n d\nu \Bigr \vert \\
\leq & \limsup_{H\to \infty} \frac{1}{H} \sum_{h=0}^{H-1} \|f_1 \cdot f_1 R_1^h \|_{L^2(\mu)} \limsup_{N\to \infty}\Bigl \| \frac{1}{N} \sum_{n=0}^{N-1}  \bigotimes_{i=2}^{d}(f_{i}\circ R_i^h  \cdot f_i)\circ (R_1^{-1}R_i)^n    \Bigr \|_{L^2(\nu')}, 
\end{split}
\end{equation*}
where $\nu'$ is the projection of $\nu$ onto the last $d-1$ coordinates. By the induction hypothesis, the last $\limsup$ is bounded by 
\[ \min_{2\leq i \leq d} \normm{f_{i}\circ R_i^{h} \cdot f_i}_{\mu,I'_{i}},  \] 
where $I_i'=\{R_1^{-1}R_i, (R_1^{-1}R_i)^{-1}R_1^{-1}R_j: ~ 2\leq j\leq d, j\neq i \}=\{R_1^{-1}R_i, R_i^{-1}R_j: ~ 2\leq j\leq d \}=\{R_i^{-1}R_j: j\neq i\} $ (here we use the Theorem \ref{ine} (2)). 

We get the bound 
\[\min_{2\leq i \leq d} \limsup_{H\to \infty} \frac{1}{H} \sum_{h=0}^{H-1} \normm{f_{i}\circ R_i^h \cdot f_i}_{\mu,I'_{i}}, \]
which by Theorem \ref{ine} equals to $\min_{2\leq i \leq d} \normm{f_{i}}_{\mu,I_{i}}$,
where $I_{i}=\{ R_i, R_i^{-1}R_j: j\neq i\}$. 
Changing the role of the functions we get the bound 
$\min_{1\leq i \leq d} \normm{f_{i}}_{\mu,I_{i}}$
and we are done.
\end{proof}

For any $I\subseteq[d]$, the transformations $S^{-1}_{1}S_{i}, i\in I$ act in the same way in $\Phi_{I}$ (because $S_1^{-1}S_i(S_1^{-1}S_{j})^{-1}=S_iS_j^{-1}$ which acts trivially in $\Phi_{I}$). We use $T^{*}$ to denote any expression of this transformation. 
Note that $T^{*}$ acts trivially on $\Phi_{I}$ if $1\in I$. For convenience, for $\vert I\vert=1, I=\{i\}$, we denote $\Phi_{I}=X_{i}$.
We have
\begin{lem}\label{nvdc}
	Let $(X,\mu,S_{1},\dots,S_{d})$ be a $\mathcal{Z}^{\ast}$-sated measure preserving system with commuting transformations and $1\leq k\leq d-1$.
	Then the space 
	\[(\bigvee_{I\subseteq\{2,\dots,d\},\vert I\vert=k}\Phi_{I},\I_{T^{*}},\nu^{F})\] is isomorphic to \[\Bigl (\bigvee_{I\subseteq\{2,\dots,d\},\vert I\vert=k}\Phi_{\{1\}\cup I},\nu^{F}\Bigr )\vee\Bigl (\bigvee_{I\subseteq\{2,\dots,d\},\vert I\vert=k+1}\Phi_{I},\I_{T^{*}},\nu^{F} \Bigr).\]
\end{lem}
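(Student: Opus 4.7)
The plan is to verify the two inclusions of $\sigma$-algebras separately. The inclusion of the right-hand side into the left-hand side is straightforward: $\bigvee_{|I|=k, I\subseteq\{2,\dots,d\}}\Phi_{\{1\}\cup I}$ is a sub-$\sigma$-algebra of $\bigvee_{|I|=k, I\subseteq\{2,\dots,d\}}\Phi_I$ because $\Phi_{\{1\}\cup I}$ is a factor of $\Phi_I$, and it is $T^{*}$-invariant since $T^{*}$ acts trivially on any $\Phi_J$ with $1\in J$; the second piece $\mathcal{I}_{T^{*}}(\bigvee_{|I|=k+1, I\subseteq\{2,\dots,d\}}\Phi_I)$ is $T^{*}$-invariant by definition and sits inside $\bigvee_{|I|=k}\Phi_I$ because each $\Phi_J$ with $|J|=k+1$, $J\subseteq\{2,\dots,d\}$, is a factor of $\Phi_I$ for every $I\subset J$ of size $k$.

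For the reverse inclusion, I would show that any $T^{*}$-invariant $f\in L^{2}(\bigvee_{|I|=k}\Phi_I,\nu^{F})$ orthogonal to the right-hand side must vanish. A useful preliminary observation is that, for a $T^{*}$-invariant function $f$, orthogonality to $\mathcal{I}_{T^{*}}(\mathcal{A})$ coincides with orthogonality to $\mathcal{A}$ whenever $\mathcal{A}$ is $T^{*}$-preserved (since the projection of a $T^{*}$-invariant function onto such an $\mathcal{A}$ is itself $T^{*}$-invariant). Hence the hypothesis on $f$ reduces to $\mathbb{E}(f\,|\,\bigvee_{|I|=k}\Phi_{\{1\}\cup I})=0$ and $\mathbb{E}(f\,|\,\bigvee_{|I|=k+1}\Phi_I)=0$. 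By a density argument, reduce to tensor products $f=\bigotimes_I f_I$; the orthogonality then amounts to $\mathbb{E}(f_{I_0}\,|\,\bigvee_{j\in[d]\setminus I_0}\Phi_{I_0\cup\{j\}})=0$ for some $I_0$, where the $j=1$ term matches the first piece of the right-hand side and the terms with $j\in\{2,\dots,d\}\setminus I_0$ match the second piece.

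The $\mathcal{Z}^{*}$-satedness of $X$ together with Lemma \ref{lem:SatedAndMagic} then translates this orthogonality into the vanishing of the Host seminorm $\normm{f_{I_0}}_{\mu,\mathcal{R}_{I_0}}$ on $\Phi_{I_0}$, where $\mathcal{R}_{I_0}=\{R_j:j\in[d]\setminus I_0\}$ consists of the transformations induced by $S_1^{-1}S_j$ on $\Phi_{I_0}$. The proof concludes by applying a van der Corput estimate analogous to Lemma \ref{vdc} to the self-joining of $\Phi_{I_0}$ inside $\nu^{F}$, which bounds $\|\mathbb{E}(f\,|\,\mathcal{I}_{T^{*}})\|_{L^{2}(\nu^{F})}$ by a constant multiple of $\normm{f_{I_0}}_{\mu,\mathcal{R}_{I_0}}$.

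The main obstacle is bookkeeping: the van der Corput iteration must produce precisely the Host seminorm corresponding to $\mathcal{R}_{I_0}$, and one must carefully track which transformations appear at each step of the iteration. The $j=1$ case, where $R_1$ is the identity transformation, is the key subtlety; its contribution is exactly what forces the extra factor $\Phi_{\{1\}\cup I_0}$ to appear on the right-hand side of the lemma, separate from the contributions encoded in $\mathcal{I}_{T^{*}}(\bigvee_{|I|=k+1}\Phi_I)$.
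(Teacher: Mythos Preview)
Your overall strategy matches the paper's: bound $\|\mathbb{E}(\bigotimes_I f_I\mid\mathcal{I}_{T^*})\|_{L^2(\nu^F)}$ by a Host seminorm on a single coordinate via Lemma~\ref{vdc}, then invoke the magic property coming from $\mathcal{Z}^*$-satedness. Two points, however, do not go through as written.

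First, the reduction is mis-framed. You cannot take a $T^*$-invariant $f$ orthogonal to the right-hand side and then ``reduce by density to tensor products $f=\bigotimes_I f_I$'': tensor products are almost never $T^*$-invariant, and orthogonality of $f$ to the right-hand side does not decompose into the condition $\mathbb{E}(f_{I_0}\mid\cdot)=0$ for a single $I_0$. The correct route (which the paper takes) is to show that for every tensor product $g=\bigotimes_I f_I$ the projection $\mathbb{E}(g\mid\mathcal{I}_{T^*})$ is unchanged when each $f_I$ is replaced by $\mathbb{E}(f_I\mid\bigvee_{j\notin I}\Phi_{I\cup\{j\}})$; one then treats the difference $f_{I_0}-\mathbb{E}(f_{I_0}\mid\cdot)$ via the seminorm bound.

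Second, and more substantively, the seminorm $\normm{\cdot}_{\mu,\mathcal{R}_{I_0}}$ with $\mathcal{R}_{I_0}=\{S_1^{-1}S_j:j\in[d]\setminus I_0\}$ is not the right one. On $\Phi_{I_0}$ the invariant factor of $T_j=S_1^{-1}S_j$ (for $j\notin I_0\cup\{1\}$) is \emph{not} $\Phi_{I_0\cup\{j\}}$: the latter requires invariance under $S_a^{-1}S_j$ with $a\in I_0$, which is not in the group generated by $\{S_a^{-1}S_b:a,b\in I_0\}$ and $S_1^{-1}S_j$. Hence satedness gives no magic equivalence for your $\mathcal{R}_{I_0}$. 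What the paper does is choose, for each $I\neq I_0$, the representative $T^{I}=S_1^{-1}S_{j'}$ with $j'\in I\setminus I_0$; Lemma~\ref{vdc} then yields the seminorm attached to $B_{I_0}=\{T^{I_0}\}\cup\{T^{I_0}(T^{I})^{-1}:I\neq I_0\}=\{S_1^{-1}S_a\}\cup\{S_{j'}^{-1}S_a:j'\in\{2,\dots,d\}\setminus I_0\}$ for a fixed $a\in I_0$. On $\Phi_{I_0}$ these have invariant factors exactly $\Phi_{I_0\cup\{1\}}$ and $\Phi_{I_0\cup\{j'\}}$, so satedness applies. In particular the factor $\Phi_{\{1\}\cup I_0}$ appears because of the leading term $T^{I_0}$ in the van der Corput seminorm, not because of any ``$R_1=\id$'' contribution.
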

\begin{rem}
	In this lemma, we use $\nu^{F}$ to also denote its projections onto the correspondent coordinates. 
\end{rem}
\begin{proof}
Denote $A_{k}=\{I\subseteq\{2,\dots,d\}\colon\vert I\vert=k\}$ and $A_{k+1}=\{I\subseteq\{1,\dots,d\}\colon\vert I\vert=k+1\}$.
	Fix any $J\in A_{k}$. Pick $T^{J}=S^{-1}_{1}S_{j}$ for an arbitrary $j\in J$. For all $J'\in A_{k}, J'\neq J$, pick $T^{J'}=S^{-1}_{1}S_{j'}$ for an arbitrary $j'\in J'\backslash J$. Since $\vert J'\vert=\vert J\vert$, such $j'$ always exists. Moreover, for all $i\in \{2,\dots,d\}\backslash, i\neq j$, there exists $J'\in A_{k}$ such that $j'=i$ (take for example $J'=\{i\}\cup J\backslash\{j\}$).
	
Recall that $\Phi_{I}$ is the factor of $X$ corresponding to the sub-$\sigma$-algebra of $T_{i}=S_{1}^{-1}S_i$ invariant sets for all $i\in I$.	
By Lemma \ref{vdc}, $$(\Phi_{A_{k}},\I_{T^{*}},\nu^{F}):=(\bigvee_{I\in A_{k}}\Phi_{I},\I_{(\prod_{I\in A_{k}}T^{I})},\nu^{F})$$ is isomorphic to 
$$(\bigvee_{I\in A_{k}}Z^{B_I}(\Phi_{I}),\I_{(\prod_{I\in A_{k}}T^{I})},\nu^{F}),$$ where $B_I=\{T^{I},(T^{I})^{-1}T^{I'},I'\in A_{k},I'\neq I\}$. Particularly, $(\Phi_{A_{k}},\I(T^{*}),\nu^{F})$ is isomorphic to \[(Z^{A_J}(\Phi_{J})\bigvee_{J'\in A_{k},J'\neq J}\Phi_{J'},\I_{(T^{J}\times\prod_{J'\in A_{k},J'\neq J}T^{J'})},\nu^{F}).\]
	Note that 
	\begin{equation}\nonumber
		\begin{split}
			&\quad Z^{B_{J}}(\Phi_{J})=Z^{T^{J}}(\Phi_{J})\bigvee_{J'\neq J}Z^{T^{J'}}(\Phi_{J})
			\\&=Z^{S^{-1}_{1}S_{j}}(\Phi_{J})\bigvee_{J'\neq J}Z^{S^{-1}_{j}S_{j'}}(\Phi_{J})
			=\Phi_{\{1\}\cup J}\bigvee_{J'\neq J}\Phi_{\{j'\}\cup J}.
		\end{split}
	\end{equation}
	Since $X$ is $\mathcal{Z}^{\ast}$-sated, we have that $\Bigl(\bigvee_{I\in A_{k}}\Phi_{I},\I_{T^{*}},\nu^{F}\Bigr)$ is isomorphic to \[\Bigl ((\bigvee_{I\in A_{k+1},J\subseteq I}\Phi_{I})\vee(\bigvee_{I\in A_{k}, I\neq J}\Phi_{I}),\I_{T^{*}},\nu^{F}\Bigr ).\]

To justify this, it suffices to show that $\mathbb{E}\Bigl(\bigotimes_{I \in A_{k}} f_{I}\vert \I_{T^{*}}\Bigr )$ is measurable with respect to  $(\bigvee_{I\in A_{k+1}}\Phi_{I},\nu^{F})$  whenever $f_I$ is measurable with respect to $\Phi_{I}$. Choose some $I_0\in A_{k}$ and let $B_{I_{0}}=\{T^{I_{0}},(T^{I_{0}})^{-1}T^{I},I\in A_{k},I\neq I_0\}$. By the Ergodic Theorem and Lemma \ref{vdc}, we have that
\begin{align*}
&\Bigl \| \mathbb{E}\Bigl(f_{I_0}-\mathbb{E}(f_{I_0}\vert \bigvee_{I'\in A_{K+1}\atop I_0\subseteq I'} \Phi_{I'})  \bigotimes_{I \in A_{k}, I\neq I_0} f_{I}\Big\vert \I_{T^{*}}\Bigr )\Bigr \|_{L^2(\nu^F)} \\
\leq & \Bignormm{f_{I_0}-\mathbb{E}\Bigl (f_{I_0}\vert \bigvee_{I'\in A_{K+1}\atop I_0\subseteq I'} \Phi_{I'}\Bigr)}_{B_{I_0}}=0.\\
\end{align*}

So in $\mathbb{E}\Bigl(\bigotimes_{I \in A_{k}} f_{I}\vert \I_{T^{*}}\Bigr )$ we may replace each function $f_{I}$ by its respective conditional expectation with respect to $\bigvee_{I'\in A_{K+1}\atop I\subseteq I'}\Phi_{I'}$.   	
We get that $(\bigvee_{I\in A_{k}}X_{I},\I_{T^{*}},\nu^{F})$ is in fact measurable with respect to 
\[(\bigvee_{I\in A_{k}}\bigvee_{I'\in A_{k+1} \atop I\subseteq I'}\Phi_{I'},\I_{T^{*}},\nu^{F})=(\bigvee_{I\in A_{k+1}}\Phi_{I},\I_{T^{*}},\nu^{F}).\]
	 
Note that $T^{*}$ acts trivially on $\Phi_{I}$ for all $I\in A_{k+1}, 1\in I$. This finishes the proof.
\end{proof}	

\begin{proof}[Proof of Theorem \ref{Thm:key3}]
	By Lemma \ref{nvdc}, \[(X_{2}\vee\dots\vee X_{d},\I_{(S^{-1}_{1}S_{2}\times\dots\times S^{-1}_{1}S_{d})},\nu^{F})\] 
	is isomorphic to \[(\bigvee_{I\subset\{2,\dots,d\}}\Phi_{\{1\}\cup I},\nu^{F}).\] 

Since $\Phi_{\{1\}\cup I}$ is a factor of $\Phi_{\{1,i\}}$ for any $i\in I$, we conclude that 
	 $(\bigvee\limits_{I\subset\{2,\dots,d\}}\Phi_{\{1\}\cup I},\nu^{F})$ is isomorphic to
	$(\bigvee\limits_{i\in\{2,\dots,d\}}\Phi_{\{1,i\}},\nu^{F})$, which finishes the proof.
\end{proof}

\subsection{Description of the measure $\mu^F$.}\label{Sec:app3}
Let $\mu^F$ be the Furstenberg-Ryzhikov self-joining associated to $(S_1,\ldots,S_d)$ in $X^d$. Recall that the projection onto the last $d-1$ coordinates is the Furstenberg-Ryzhikov self-joining associated to $(S^{-1}S_2,\ldots,S^{-1}S_d)=(T_2,\ldots,T_d)$ is denoted by $\nu^F$. 

We may decompose $\mu^F$ with respect to $\nu^F$  \[\mu^F=\int_{X_{2}\times\dots\times X_{d}} \lambda_{\vec{x}}\times \delta_{\vec{x}} d\nu^F(\vec{x}). \]
By the invariance of $\mu_F$ under $\id \times T_2\cdots \times T_d$, we get that $\lambda_{T_2\cdots \times T_d\vec{x}}=\lambda_{\vec{x}}$. Hence the map $\vec{x}\mapsto \lambda_{\vec{x}}$, $X_{2}\times\dots\times X_{d}\to M(X)$ is $I(T_2\times \cdots\times T_d)$-measurable. By Theorem \ref{Thm:key3}, this $\sigma$-algebra is isomorphic to the $\bigvee_{j\geq 2} X_{\{1,j\}}$ factor of the first copy of $X$ in $\mu^F$. To ease notation we write $Y_1=\bigvee_{j\geq 2} X_{\{1,j\}}$ and let $\pi_1\colon X\to Y_1$ denote the corresponding factor map. 
We use the variable $s$ to denote points in $Y_1$ to avoid confusion. We can write $\lambda_{\vec{x}}=\lambda_{s}$ under the isomorphism given by Lemma \ref{nvdc}. In particular, $\lambda_s(\pi_1^{-1}s)=1$. 

Let $$\nu^F=\int_{Y_1} \nu^{F}_{s}d\mu_1(s)$$ be the disintegration of $\nu_F$ over its $(T_2\times\cdots\times T_d)$-invariant $\sigma$-algebra (here we identity this factor with $Y_1$). Since it is the disintegration over the invariants, we get that for $\mu_1$-a.e. $s\in Y_1$, the measure $\nu^F_{s}$ is $(T_2\times\cdots\times T_d)$-ergodic. 
Then \[\mu^F=\int \lambda_{\vec{x}}\times \delta_{\vec{x}} d\nu^{F}(\vec{x})=\int \int_{Y_{1}} \lambda_{s}\times\delta_{\vec{x}}d\nu^F_{s}(\vec{x})d\mu_1(s)=\int_{Y_{1}} \lambda_{s}\times \nu^F_{s} d\mu_1(s).  \]

On the other hand, projecting $\mu^F$ onto the first coordinate, we have $p_1\mu^F=\mu$ and then $\mu=\int_{Y_1} \lambda_{s}d\mu_1(s)$. Since $\lambda_s(\pi_1^{-1}(s))=1$, we have that $\mu=\int_{Y_1} \lambda_{s}d\mu_1(s)$ is in fact the disintegration of $\mu$ over $\mu_1$. 

We are now ready to proof Theorem \ref{THM:ergodicmeasurelimit}.
\begin{proof}[Proof of Theorem \ref{THM:ergodicmeasurelimit}]
Let $(Y,T_2,\ldots,T_d)$ be a system of $d-1$ commuting transformations. Here for convenience of notation we start with the index 2. We set $T_1=\id$ and regard it $(Y,T_1,T_2,\ldots,T_d)$ as a $\Z^{d}$ measure preserving system.  We may change coordinates in $\Z^d$  and regard the system $(Y,\mu, S_1,S_2,\ldots, S_d)$, where $S_1=T_1$ and $S_i=T_1T_i$, $i\geq 2$.  By Theorem \ref{Thm:ext}, we may find an $\mathcal{Z^{\ast}}$-extension of this system $(X,\mu,S_1,\ldots,S_d)$ and we work on this extension from now on. It is important to stress that we consider the action on the new system of coordinates (different system of coordinates lead to different extensions). We remark that $(X,\mu,S_{1}^{-1}S_2,\ldots,S_{1}^{-1}S_d)$ is also an extension of $(Y,\mu,T_2,\ldots,T_d)$ and thus it suffices to prove this theorem for $X$.

We claim that $\mu^{F}_{x}=\nu^{F}_{\pi_{1}(x)},x\in X$ satisfy the requirements in the statement.

Recall that \[\mu^F=\int \lambda_{\vec{x}}\times \delta_{\vec{x}} d\nu^{F}(\vec{x})=\int \int \lambda_{s}\times\delta_{\vec{x}}d\nu^F_{s}(\vec{x})d\mu_1(x)=\int \lambda_{s}\times \nu^F_{s} d\mu_1(s),  \]
where $\mu=\int_{Y_1} \lambda_{s}d\mu_1(s)$ and $\nu^F_{s}$ is $(T_2\times\cdots\times T_d)$-ergodic. 
Disintegrating $\mu$ with respect to $\mu_1$ and using Fubini's Theorem, we have the following expression 
\begin{align} \label{ExpreFurstenberg}
\begin{split} & \int_X \delta_{x}\times \nu^F_{\pi_1(x)} d\mu(x) = \int_{Y_1} \int_{X} \delta_{x}\times \nu^F_{\pi_1(x)} d\lambda_{s}(x) d\mu_1(s)=\int_{Y_1} \int_{X} \delta_{x}\times \nu^F_{s} d\lambda_{s}(x) d\mu_1(s)\\ = & \int_{Y_1}( \int_{X} \delta_{x}d\lambda_{s}(x)) \times \nu^F_{s}  d\mu_1(s)=\int_{Y_1} \lambda_{s}\times \nu^F_{s}d \mu_1(s)=\mu^F.
\end{split}
\end{align}

Let $f_2,\ldots,f_d\in L^{\infty}(\mu)$ and let $F$ be the $L^2$-limit of $\lim\limits_{N\to \infty}\frac{1}{N}\sum_{n=0}^{N-1} f_2(T_2^nx)\cdots f_d(T_d^nx)$. Then

\begin{align*}
 &\int_{X} f_1(x)F(x)d\mu(x)=\lim_{N\to \infty}\frac{1}{N}\sum_{n=0}^{N-1} \int f_1(x)f_2(T_2^nx)\cdots f_d(T_d^nx)d\mu(x)\\
 &=\lim_{N\to\infty} \int \frac{1}{N}\sum_{n=0}^{N-1} f_1(T_1^n x)f_2((T_1T_2)^n x)\cdots f_d((T_1T_d)^nx)d\mu(x)\\
  &=\lim_{N\to\infty} \int \frac{1}{N}\sum_{n=0}^{N-1} f_1(S_1^n x)f_2(S_2^n x)\cdots f_d(S_d^nx)d\mu(x)\\
 &= \mu^F(f_1\otimes f_2\cdots\otimes f_d)=\int_{X} f_1(x)(\int f_2 \otimes\cdots\otimes f_d d\nu^F_{\pi_1(x)})d\mu(x),
\end{align*}
where the last equality follows from the expression \eqref{ExpreFurstenberg}. We conclude that  $F(x)=\int f_2 \otimes\cdots\otimes f_d d\nu^F_{x}$ where we slightly abuse notation and write $\mu^F_x=\nu^F_{\pi_1(x)}$.
\end{proof}

\subsection{Multiple averages in distal systems} \label{Sec:Distalaverages}
We start with the basic definitions of distal systems and refer to \cite{Glas} Chapter 10 for further details. We use many concepts and facts used in \cite{HSY} and \cite{DS2}. 
\begin{defn}
Let $\pi\colon (X,\mathcal{X},\mu,G)\to (Y,\mathcal{Y},\nu,G)$ be a factor map between two ergodic systems. We say $\pi$ is an \emph{isometric} extension if there exist a compact group $H$, a closed subgroup $\Gamma$ of $H$, and a cocycle $\rho\colon G \times Y\to H$ such that $(X,\mathcal{X},\mu,G)\cong (Y\times H/\Gamma,\mathcal{Y}\times\mathcal{H}, \nu\times m, G)$, where $m$ is the Haar measure on $H/\Gamma$, $\mathcal{H}$ is the Borel $\sigma$-algebra on $H/\Gamma$, and that for all $g \in G$, we have
$$g(y,a\Gamma)=(g y, \rho(g,y)a\Gamma).$$

We say that $\pi\colon (X,\mathcal{X},\mu,G)\to (Y,\mathcal{Y},\nu,G)$ is an {\it isometric extension} with fiber $H/\Gamma$ and cocycle $\rho$ and denote it by $Y\times_{\rho} H/\Gamma$.
\end{defn}

\begin{rem} \label{Rem:TopologyH} The group ${\rm Aut}(X,\mu)$ of measurable transformations of $X$ which preserve the measure $\mu$ is a Polish group endowed it with the weak topology of convergence in measure (see \cite{BK}, Chapter 1). Under this topology, the convergence is characterized as follows:
\[ h_n\to h\in {\rm Aut}(X,\mu) \text{ if and only if } \| f\circ h-f\circ h_n \|_{L^2(\mu)}\to 0 \text{ for all } f\in L^2(\mu).\] 
The inclusion of the compact group $H$ in ${\rm Aut}(X,\mu)$ is continuous since measurable morphisms between Polish groups are automatically continuous (see \cite{BK}, Chapter 1, Theorem 1.2.6). This fact does not depend on the topological model chosen for $X$.  
\end{rem}

\begin{rem}
For every isometric extension $\pi\colon X\to Y$ with fiber $H/\Gamma$ and  measurable function  $f$ on $(X,\mu)$, the conditional expectation of $f$ (as a function on $(X,\mu)$) with respect to $Y$ is 
\[\mathbb{E}(f\vert \mathcal{Y})(x)=\int_{H} f(hx)dm(h).\]

Equivalently (as a function on $(Y,\mathcal{Y},\nu)$),

\[\mathbb{E}(f\vert Y)(y)=\int_{H} f(hx)dm(h) \quad \text{ for all }\pi(x)=y. \]   
  
\end{rem}

\begin{defn}\label{dis}
Let $\pi\colon (X,\mathcal{X},\mu,G)\to (Y,\mathcal{Y},\nu,G)$ be a factor map between two ergodic systems. We say $\pi$ is a \emph{distal extension} if there exist a countable ordinal $\eta$ and a directed family of factors $(X_{\theta},\mu_{\theta}, G), \theta\leq\eta$ such that
\begin{enumerate}
\item  $X_{0}=Y$, $X_{\eta}=X$;
\item  For $\theta<\eta$, the extension $\pi_{\theta}\colon X_{\theta+1}\to X_{\theta}$ is isometric and is not an isomorphism;
\item  For a limit ordinal $\l\leq \eta$, $X_{\l}=\lim\limits_{\leftarrow \theta<\l} X_{\theta}$.
\end{enumerate}
We say $X$ is a \emph{distal system} if $X$ is a distal extension of the trivial system.
\end{defn}

We adopt here the same definition when  $G$ is not ergodic. In all the cases we consider, the group $G$ is a subgroup of an ergodic action, so we will not take other approaches to non-ergodic distal systems such as in \cite{Aus2010}.

An equivalent definition of an ergodic measurable distal system can be given using {\it separating sieves}:
\begin{defn}
Let $\pi\colon (X,\mathcal{X},\mu,G)\to (Y,\mathcal{Y},\nu,G)$ be a factor map between two ergodic systems. A {\it separating sieve} for $X$ over $Y$ is a sequence of measurable subset $\{A_i\}_{i\in \mathbb{N}}$ with $A_{i+1}\subseteq A_{i}$, $\mu(A_i)>0$ and $\mu(A_i)\to 0$ such that there exists a measurable subset $X'\subseteq X$, $\mu(X')=1$ with the following property: for $x,x'\in X'$, if $\pi(x)=\pi(x')$ and for every $i\in \mathbb{N}$ there exists $g\in G$ such that $gx,gx'\in A_i$, then $x=x'$.
\end{defn}

\begin{prop}(\cite{Glas}, Chapter 10)
	Let $(X,\mathcal{X},\mu,G)$ be an extension of $(Y,\mathcal{Y},\nu,G)$. Then $X$ is a distal extension of $Y$ if and only if there exists a separating sieve for $X$ over $Y$.
\end{prop}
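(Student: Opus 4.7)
The plan is to prove the two implications separately, mimicking the classical Furstenberg--Zimmer approach; since the result is standard (Glasner, Chapter 10), I would only sketch the essential ideas.

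For the forward direction ($\Rightarrow$), I would proceed by transfinite induction on the tower $(X_\theta)_{\theta\leq\eta}$ of isometric extensions. The base case of a single isometric extension $\pi\colon X = Y\times_{\rho} H/\Gamma \to Y$ is the heart of the argument. Fix a left-invariant metric $d$ on $H$, which descends to a metric on $H/\Gamma$ invariant under left multiplication. Let $U_i\subseteq H/\Gamma$ be a decreasing sequence of open neighborhoods of $e\Gamma$ with $m(U_i)\to 0$, and set $A_i = Y\times U_i$. Then $A_{i+1}\subseteq A_i$ and $\mu(A_i)\to 0$. If $x_1 = (y,a_1\Gamma)$ and $x_2=(y,a_2\Gamma)$ lie in a common fiber and for every $i$ some $g_i\in G$ sends both into $A_i$, then $\rho(g_i,y)a_1\Gamma$ and $\rho(g_i,y)a_2\Gamma$ both lie in $U_i$; by the left-invariance of $d$,
\[ d(a_1\Gamma,\,a_2\Gamma) = d(\rho(g_i,y)a_1\Gamma,\,\rho(g_i,y)a_2\Gamma) \to 0, \]
so $a_1\Gamma=a_2\Gamma$. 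At successor steps I would concatenate the sieve obtained by induction for $X_\theta/Y$ with the one produced by the base case for $X_{\theta+1}/X_\theta$, intersecting pullbacks appropriately so the measures still decrease to zero. At a countable limit ordinal I would diagonalize over the countably many sieves produced below.

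For the reverse direction ($\Leftarrow$), I would argue by contradiction using the Furstenberg--Zimmer dichotomy relative to $Y$: if $X\to Y$ is not distal, then there is an intermediate factor $Y\subseteq Z\subsetneq X$ such that $X\to Z$ is a nontrivial \emph{relatively weakly mixing} extension. Relative weak mixing means that the relatively independent self-joining $\mu\times_Z\mu$ is ergodic under the diagonal action modulo $Z$-invariants. From this one deduces a Rokhlin-type recurrence statement: for any measurable $A\subseteq X$ with $\mu(A)>0$, the set of pairs $(x,x')$ in a common $Z$-fiber for which there exists $g\in G$ with $gx,gx'\in A$ has full $(\mu\times_Z\mu)$-measure. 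Applying this simultaneously to a sequence $A_i$ coming from a purported separating sieve, and using a standard Borel--Cantelli/diagonal argument to pass to a single sequence $g_i$, one produces a positive-measure set of pairs $(x,x')$ with $\pi(x)=\pi(x')$, $x\neq x'$, yet such that for each $i$ some $g\in G$ sends both into $A_i$. This contradicts the separating sieve property.

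I expect the reverse direction to be the main obstacle: the forward direction is a direct construction, whereas the reverse direction requires invoking (and correctly quoting) the relative Furstenberg--Zimmer structure theorem and carefully exploiting relative weak mixing to find the forbidden pair of points. The subtlety lies in the measurability and Borel--Cantelli bookkeeping needed to extract a single sequence $g_i$ that works simultaneously for a full-measure set of pairs; this is exactly the step where one uses that the sieve consists of only countably many sets $A_i$.
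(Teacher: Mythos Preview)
The paper does not prove this proposition: it is stated with a citation to Glasner's book (Chapter 10) and used as a black box, with no argument given. So there is no ``paper's own proof'' to compare your attempt against.

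That said, your sketch follows the standard Furstenberg--Zimmer route and is broadly correct. The forward direction is fine as written. For the reverse direction, your outline is the right strategy, but the sentence ``the set of pairs $(x,x')$ in a common $Z$-fiber for which there exists $g\in G$ with $gx,gx'\in A$ has full $(\mu\times_Z\mu)$-measure'' is not literally true for a fixed $A=A_i$: relative weak mixing gives ergodicity of $\mu_z\times\mu_z$ only fiberwise, and on fibers with $\mu_z(A_i)=0$ the recurrence fails. The clean way around this is to observe that the set $E_i=\{(x,x'):\exists\,g,\ gx,gx'\in A_i\}$ is $G$-invariant, hence (by relative weak mixing) $\mathcal{Z}$-measurable as a subset of $X\times_Z X$; since $E_i$ contains the image of the diagonal embedding of $X$ (by ergodicity of $\mu$), one shows each $E_i$ has full $\mu\times_Z\mu$-measure and then intersects over $i$. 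This is exactly the ``measurability and Borel--Cantelli bookkeeping'' you flag at the end, so you have correctly identified where the work lies; just be aware that the naive per-fiber argument needs this extra step.
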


\begin{defn}
	Let $(X,\mu,S_1,\ldots,S_d)$ be a system with commuting transformations and let $p_i\colon$ $(X,\mu,S_1,$ $\ldots,S_d)\to (Y_i,\mu,S_1,\ldots,S_d)$, $i=2,\ldots,d$ be $d$-factor maps. We say that $(p_1,\ldots,p_d)$ is a {\it good} tuple for the pointwise convergence of multiple averages for $(S_1,\ldots,S_d)$  if 
	
	\[\frac{1}{N}\sum_{n=0}^{N-1} f_1(S_1^n x)\cdots f_d(S_d^nx)\]
	converges $\mu$-a.e. $x\in X$ whenever $f_i$ is measurable with respect to $Y_i$.
	
	This is equivalent to say that 
	\[\frac{1}{N}\sum_{n=0}^{N-1} \mathbb{E}(f_1|Y_1)(S_1^n p_1x )\cdots  \mathbb{E}(f_d|Y_d)(S_d^n p_d x)\]
	converges $\mu$-a.e. $x\in X$ for all measurable functions $f_1,\ldots,f_d$.
\end{defn}

The proof of the following proposition is similar to the one in Proposition 4.16 in \cite{DS2}.

\begin{prop}\label{Prop:iso}
Let $(X,\mu,S_1,\ldots,S_d)$ be magic for $(T_1,\ldots,T_d)$. Let $\phi'\colon X\to Y$ and $\phi\colon X \to Z$ be two factor maps such that $Y$ is an isometric extension of $Z$ with a fiber $H/\Gamma$, and $Z$ is an extension of $\bigvee_{i=1}^d \mathcal{I}_{T_i}=\I_{S_1}\bigvee_{j\neq 1} \I_{S_1^{-1}S_j}$. If $(\phi,\id,\ldots,\id)$ is good, then so is $(\phi',\id,\ldots,\id)$. 
\end{prop}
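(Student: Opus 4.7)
The approach adapts the proof of Proposition~4.16 of \cite{DS2}, where the analogous statement for $d=2$ is established; the three ingredients that need to generalize are a density reduction in the first slot, the Peter-Weyl decomposition on the compact fiber, and the magic property combined with the fact that $Z$ refines $\bigvee_i \mathcal{I}_{T_i}$.

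The plan is as follows. First, by the $L^1$-approximation principle of Lemma~\ref{fd_appro} applied to $g_1$, it suffices to verify pointwise convergence for $g_1$ ranging over an $L^1(\nu_Y)$-dense subset of $L^\infty(Y)$. Writing $Y$ measurably as $Z\times H/\Gamma$ and applying Peter-Weyl to the compact group $H$, any bounded $g_1$ is approximated in $L^1$ by finite sums $\sum_k c_k(z)\psi_k(h\Gamma)$, where $c_k$ is bounded on $Z$ and $\psi_k$ is a matrix coefficient descending to $H/\Gamma$. By linearity it is enough to treat one product $g_1(z,h\Gamma)=c(z)\psi(h\Gamma)$. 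Writing $\phi'(x)=(\phi(x),\xi(x))$ in fiber coordinates, the orbit takes the explicit form
\[
g_1(S_1^n\phi'(x)) = c(S_1^n\phi(x))\,\psi(\rho(S_1^n,\phi(x))\xi(x)\Gamma),
\]
where $\rho$ is the cocycle of the isometric extension.

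Second, I would split $g_1\circ\phi' = \mathbb{E}(g_1\circ\phi'\mid Z)\circ\phi + R$, with $\mathbb{E}(R\mid Z)=0$. The main term factors through $\phi$ and the hypothesis that $(\phi,\id,\ldots,\id)$ is good disposes of it directly. For the error $R$, the inclusion $\bigvee_i\mathcal{I}_{T_i}\subseteq Z$ forces $\mathbb{E}(R\mid\bigvee_i\mathcal{I}_{T_i})=0$, and the magic hypothesis on $X$ then yields $\normm{R}_{\mu,T_1,\ldots,T_d}=0$; this produces $L^2$-vanishing of the error average. To upgrade this to pointwise vanishing, I would invoke Peter-Weyl once more on the fiber, writing $\psi$ as a finite combination of matrix coefficients of an irreducible $H$-representation $\pi$, so that the fiber term becomes a finite sum of products of matrix entries of $\pi\circ\rho$---which are $Z$-measurable functions evaluated at $S_1^n\phi(x)$---with fixed functions of $\xi(x)$; the hypothesis on $\phi$ applied to each resulting piece then delivers the desired pointwise convergence to zero.

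The principal obstacle is exactly this last upgrade from $L^2$-vanishing to pointwise vanishing: the magic property alone only controls Host seminorms, not individual orbits. It is the isometric structure---cocycle valued in a compact group $H$ over $Z$---together with Peter-Weyl that allows the fiber contribution to be recast as finitely many genuinely $Z$-measurable data pieces, thereby keeping us in the regime where the hypothesis that $(\phi,\id,\ldots,\id)$ is good applies directly.
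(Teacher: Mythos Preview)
Your proposal has a genuine gap at precisely the point you yourself flag as the principal obstacle, and your proposed resolution does not work.

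You claim that after Peter--Weyl the fiber contribution becomes ``a finite sum of products of matrix entries of $\pi\circ\rho$---which are $Z$-measurable functions evaluated at $S_1^n\phi(x)$''. This is false. The quantity $\pi_{ij}\bigl(\rho(S_1^n,z)\bigr)$ is \emph{not} of the form $F(S_1^n z)$ for any fixed $Z$-measurable $F$: the cocycle identity gives $\rho(S_1^n,z)=\rho(S_1,S_1^{n-1}z)\cdots\rho(S_1,z)$, so $\rho(S_1^n,z)$ depends on the entire orbit segment $z,S_1z,\ldots,S_1^{n-1}z$, not on the single point $S_1^n z$. Thus there is no way to feed these pieces back into the hypothesis that $(\phi,\id,\ldots,\id)$ is good. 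If you instead try to view $\pi_{ij}(\rho(S_1^n,z))$ as $\Phi_{ij}\bigl(S_1^n(z,e\Gamma)\bigr)$ for the $Y$-measurable function $\Phi_{ij}(z,h\Gamma)=\pi_{ij}(h)$ (when this descends), you are invoking the conclusion for $Y$ rather than the hypothesis for $Z$, and the argument becomes circular. (A minor side issue: Lemma~\ref{fd_appro} concerns cubic averages along $(n_1,\ldots,n_d)$, not single-parameter multiple averages; the $L^1$-approximation principle you need here is the standard one via Birkhoff and Markov, as in the last page of the paper's proof.)

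The paper's proof takes a completely different route that avoids any explicit fiber decomposition. Passing to strictly ergodic topological models, one considers an arbitrary weak-$\ast$ limit $\lambda$ of the empirical measures $\frac{1}{N}\sum_{n}(S_1^n\times\cdots\times S_d^n)\delta_{(\phi'x,x,\ldots,x)}$; the hypothesis identifies its push-forward to $Z\times X^{d-1}$. One then introduces \emph{weighted} conditional expectations $\mathbb{E}_{\varphi}(\,\cdot\mid\mathcal{Y})(x)=\int_H f(hx)\varphi(h)\,dm(h)$ for weights $\varphi$ on $H$, defines an auxiliary measure $\mu^F_{\varphi}$ from $\lambda$, and checks that $\mu^F_{\varphi}$ is $(S_1\times\cdots\times S_d)$-invariant and absolutely continuous with respect to $(\phi'\times\id\cdots)_{\ast}\mu^F_x$. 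The crucial input---which your proposal never uses---is the \emph{ergodicity} of $\mu^F_x$ under $S_1\times\cdots\times S_d$ (Theorem~\ref{THM:ergodicmeasurelimit}); this forces $\mu^F_{\varphi}=(\phi'\times\id\cdots)_{\ast}\mu^F_x$. Letting the support of $\varphi$ shrink to the identity recovers $\lambda$ itself, showing that every weak-$\ast$ limit is the same measure, hence the averages converge pointwise. The magic hypothesis enters only indirectly, through the identification in Theorem~\ref{Thm:key3} that underlies the ergodicity statement; it is not used to kill an error term via seminorm vanishing.
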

\begin{rem}
We put the condition of being magic to have a good characterization of factor $\bigvee_{j=1}^d \mathcal{I}_{T_j} $ in terms of the seminorm $\normm{\cdot}_{\mu,T_1,\ldots,T_d}$.
\end{rem}

We need some definition for the proof.
\begin{defn}
	Let $\pi\colon X\to Y$ be an isometric extension with fiber $H/\Gamma$ and let $\varphi\colon H\to \mathbb{R}_{+}$ be a continuous function. We say that $\varphi$ is a {\it weight} if $\int_H \varphi(h)dm(h) =1$ and $\varphi(h^{-1}gh)=\varphi(g)$ for all $g,h\in H$.
	
	Let $f\in L^{\infty}(\mu)$. The conditional expectation of $f$ with weight $\varphi$ over $Y$ is defined to be 
	\[\mathbb{E}_{\varphi}(f\vert \mathcal{Y})(x)=\int_{G} f(hx)\phi(h)dm(h). \]
\end{defn} 
\begin{rem}\label{ConditionPhi}
	We use the cursive symbol $\mathcal{Y}$ to stress that this function may not be constant on the fibers of $\pi$ (thus is not a function on $Y$). Remark also that if $\varphi=1$, $\mathbb{E}_{\varphi}(f\vert \mathcal{Y})(x)=\mathbb{E}(f\vert \mathcal{Y})(x)=\mathbb{E}(f\vert {Y})(\pi(x))$.
\end{rem}
These weighted conditional expectations were considered in Proposition 6.3 in \cite{HSY} and in \cite{DS2}. They are helpful when lifting the property of pointwise convergence.

The following lemma is identical to Lemma 4.15 in \cite{DS2} and so we omit the proof. 

\begin{lem} \label{lem:WeightedConditional}
	Let $\pi\colon X\to Y$ be an isometric extension with fiber $H/\Gamma$. Let $\varphi\colon H\to \mathbb{R}_{+}$ be a weight and $f\in L^{\infty}(\mu)$. Then for $R\in \langle S_1,\ldots,S_d\rangle$ we have
	\[ \mathbb{E}_{\varphi}(f\circ R \vert \mathcal{Y})(x)=\int_{H} f\circ h \circ R (x)\varphi(h)dm(h).\]
\end{lem}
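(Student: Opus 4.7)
\medskip

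\noindent\textbf{Proof proposal.} The plan is to unwind both sides according to the definition of the weighted conditional expectation and then exploit the structure of the isometric extension $X = Y\times_\rho H/\Gamma$ together with the conjugation-invariance of the weight $\varphi$.

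First I would write out the right-hand side explicitly: by definition,
\[ \mathbb{E}_{\varphi}(f\circ R \vert \mathcal{Y})(x)=\int_{H} (f\circ R)(hx)\,\varphi(h)\,dm(h)=\int_H f(R(hx))\,\varphi(h)\,dm(h), \]
so the statement amounts to the identity
\[ \int_H f(R(hx))\,\varphi(h)\,dm(h)=\int_H f(h(Rx))\,\varphi(h)\,dm(h). \]
Represent $x=(y,a\Gamma)$. The $H$-action on $X$ is left multiplication on the fiber, $h(y,a\Gamma)=(y,ha\Gamma)$, while $R$ acts through the cocycle as $R(y,a\Gamma)=(Ry,\rho(R,y)a\Gamma)$. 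Therefore
\[ R(hx)=(Ry,\rho(R,y)ha\Gamma) \quad\text{while}\quad h(Rx)=(Ry,h\rho(R,y)a\Gamma), \]
and the two differ only by the order of the pair $(\rho(R,y),h)$ acting on the left of $a\Gamma$.

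Now the key step: make the substitution $g=\rho(R,y)\,h\,\rho(R,y)^{-1}$ in the left integral, i.e.\ $h=\rho(R,y)^{-1}g\,\rho(R,y)$. Because $H$ is compact, its Haar measure $m$ is both left- and right-invariant, hence conjugation-invariant, so $dm(h)=dm(g)$. The conjugation invariance of the weight gives $\varphi(h)=\varphi(\rho(R,y)^{-1}g\,\rho(R,y))=\varphi(g)$. Under the substitution, $\rho(R,y)h=g\,\rho(R,y)$, so
\[ R(hx)=(Ry,g\,\rho(R,y)a\Gamma)=g(Rx). \]
Plugging in,
\[ \int_H f(R(hx))\,\varphi(h)\,dm(h)=\int_H f(g(Rx))\,\varphi(g)\,dm(g)=\int_H (f\circ g\circ R)(x)\,\varphi(g)\,dm(g), \]
which is the desired right-hand side.

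I do not expect any real obstacle here: the entire content of the lemma is that the cocycle twist introduced by $R$ can be undone inside the integral using that $\varphi$ is a class function on $H$ and $m$ is bi-invariant. The only point worth stressing in a polished write-up is that the $H$-action on $X$ is genuinely well defined and continuous (as recalled in Remark \ref{Rem:TopologyH}), so that the composition $f\circ h\circ R$ and the integrand $f(hx)\varphi(h)$ are measurable in $h$ and the Fubini-type manipulation above is legitimate.
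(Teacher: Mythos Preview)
Your argument is correct: unwinding the definition reduces the lemma to interchanging $\rho(R,y)$ and $h$ on the left of $a\Gamma$, and your conjugation substitution $g=\rho(R,y)h\rho(R,y)^{-1}$ does exactly that, using bi-invariance of Haar measure on the compact group $H$ and the class-function property of $\varphi$. The paper does not give its own proof here---it simply cites the identical Lemma~4.15 in \cite{DS2}---so there is nothing to compare against, but what you wrote is the natural computation and would serve perfectly well as the omitted proof.
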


\begin{proof}[Proof of Proposition \ref{Prop:iso}]

By Theorem \ref{Weiss}, we may assume that all the spaces and factor maps are topological, {\em i.e.} the spaces are compact metric and the transformations are continuous. Let $\lambda$ be any weak limit of the sequence 
\[\lambda_N=\frac{1}{N}\sum_{n=0}^{N-1} (S_1^n\times \cdots \times S_d^n) \delta_{(\phi'x,x \ldots,x)} \]

in $M(Y\times X\times\cdots\times X)$. 

We prove that $\lambda=(\phi'\times \id\times \cdots \times \id)_{\ast}\nu^F_{x}$, the measure given by the $L^2$-convergence in Theorem \ref{THM:ergodicmeasurelimit}. Note that $\lambda$ is $(S_1\times\cdots\times S_d)$-invariant since the transformations are continuous. By hypothesis, since $(\phi,\id,\ldots,\id)$ is good for the pointwise convergence, its projection
\[ (\phi_{Y,Z}\times \id\cdots \times \id)_{*}\lambda_N=\frac{1}{N}\sum_{n=0}^{N-1} (S_1^n\times \cdots \times S_d^n) \delta_{(\phi x,x,\ldots,x)} \in M(Z\times X\times\cdots\times X) \]

($\phi_{Y,Z}$ is the factor map from $Y$ to $Z$) converges to a measure which has to be $(\phi\times \id\cdots \times \id)_{\ast}\mu^{F}_{x}$ by Theorem \ref{THM:ergodicmeasurelimit}.

Let $\varphi\colon H\to \mathbb{R}_{+}$ be a weight and let $\mu^F_{\varphi}$ denote the measure in $Y\times X\times\cdots\times X$ defined by  
\[\int f_1\otimes \cdots \otimes f_d \mu^F_{\varphi}:= \int \mathbb{E}_{\varphi}(f_1\vert Z)\otimes f_2\cdots\otimes f_d  d\lambda.\]

where $f_1$ is measurable with respect to $Y$ and $f_2,\ldots,f_d$ are measurable. 

We have that 
\begin{align*}
\left \vert \int f_1\otimes \cdots \otimes f_d \mu^F_{\varphi} \right \vert & \leq \|\varphi\|_{\infty}\int \Bigl (\int|f_1(hx_1)|dm(h) \Bigr) |f_2(x_2)\cdots f_d(x_d)|d\lambda(x_1,\ldots,x_d) \\
& = \|\varphi\|_{\infty}\int  \mathbb{E}(|f_1| ~ \vert Z) (x_1)|f_2(x_2)\cdots f_d(x_d)|d\lambda(x_1,\ldots,x_d) \\
&=\|\varphi\|_{\infty} \int \mathbb{E}(|f_1| ~ \vert Z)\otimes \cdots \otimes |f_d|d\mu^{F,\phi}_x\\
&= \|\varphi\|_{\infty}\int |f_1|\circ \phi'\otimes \cdots \otimes |f_d|d\mu^F_{x},  
\end{align*} 
where $\mu^{F,\phi}_x:=(\phi\times \id \cdots \times \id)_{\ast}\mu^F_x$ and the last equalities come from the fact that the $L^2$-limit of multiple averages remains unchanged if we replace $f_1$ by $\mathbb{E}(f_1 \vert Z)$ (this factor is above the factor $\bigvee_{i=1}^d \mathcal{I}_{T_i}$). So $\mu^F_{\varphi} \ll \mu^{F}_x$.
On the other hand, by Fubini's Theorem, the invariance of $\lambda$ under $S_1\times\cdots\times S_d$ and Lemma \ref{lem:WeightedConditional}, we have

\begin{align*}
 & \quad\int  f_1 \circ S_1 \otimes\cdots \otimes f_d \circ S_d d\mu^F_{\varphi} =\int\mathbb{E}_{\varphi}(f_1\circ S_1\vert Z) \otimes f_2 \circ S_2 \otimes \cdots\otimes f_d \circ S_d d\lambda\\
& = \int \int \Bigl( f_1\circ h\circ S_1 \otimes f_2\circ S_2\otimes \cdots \otimes f_d \circ S_d  d\lambda \Bigr ) \varphi(h)dm(h) \\
& = \int\int f_1\circ h \varphi(h)\otimes f_2\otimes \cdots\otimes f_d dm(h)d\lambda\\
&= \int\mathbb{E}_{\varphi}(f_1\vert Z)\otimes f_2 \otimes \cdots \otimes f_d d\lambda=\int  f_1\otimes f_2\otimes \cdots\otimes f_d d\mu^F_{\varphi}.
 \end{align*}  
So $\mu^F_{\varphi}$ is invariant under $S_1\times\cdots \times S_d$ and we conclude that $\mu^F_{\varphi}$ coincides with $\mu^{F}_x$ because of ergodicity of the later guaranteed by Theorem \ref{THM:ergodicmeasurelimit}.

We can then take a sequence of $\varphi_n$ whose support go to identity to recover the measure $\lambda$ in the limit, as is done in \cite{DS2}.
Let $\{g_{k}: k\in \mathbb{N} \}$ be a countable set of continuous functions included and dense in the unit ball of $C(Y)$. For $k\in \mathbb{N}$, let $B_{k,n}\subseteq H$ be a ball centered at the origin such that $h\in B_{k,n}$ implies that $\|g_{k}\circ \phi'-g_{k}\circ \phi' \circ h \|_{L^1(\mu)}\leq 2^{-n}$. 

Let $\{\varphi_{k,n}\}_{n\in \mathbb{N}}$ be a sequence of weighted functions such that the support of $\varphi_{k,n}$ is included in $B_{k,n}$ (the condition on the support can always be satisfied, we refer to Proposition 6.3 in \cite{HSY}). Define the function
  \[F_{k,n}(x)= \mathbb{E}\Big ( \Bigl( \int_{H} |g_{k}\circ \phi'-g_{k}\circ \phi' \circ  h|\varphi_{k,n}(h) dm(h) \Bigr) { \big|} \I_{S_1} \Bigr) (x).\] 

and by the Markov inequality, the set $E_{n,k,i}$ of points where $F_{k,n} > \frac{1}{i}$ has a measure  smaller than 

\[ i \int_{X} F_{k,n}d\mu=i \int_{X} \int_{H} |g_{k}\circ \phi'-g_{k}\circ \phi' \circ  h|\varphi_{k,n}(h) dm(h)  \]
By Fubini and the definition of $\varphi_{k,n}$ this term is bounded by $i2^{-n}$.

The Borel-Cantelli Lemma ensures that  $$\mu(\limsup_{n} E_{n,k,i})=0.$$ So if $X''=X'\bigcap_{k,i\in \mathbb{N}}(\limsup_{n} E_{n,k,i})^c$, then $\mu(X'')=1$. 

By the Von Neumann Theorem in a subset $X'''\subset X''$ of full measure  we have the bound 
\begin{align*}
&\Bigl \vert \int   g_k\otimes f_2\otimes\cdots \otimes f_d d\lambda-\int g_k \otimes f_2\otimes\cdots\otimes  f_d d\mu^{F}_{\varphi_{k,n}} \Bigr\vert \\
= &\Bigl \vert \int \Bigl (   \int_{H} \bigl(   g_k\otimes f_{2}\otimes \cdots \otimes f_d- g_k\circ h\otimes f_2\otimes\cdots\otimes f_d\bigr)\varphi_{k,n}(h)dm(h) \Bigr) d\lambda \Bigr \vert\\
 \leq & \mathbb{E}\Big ( \Bigl( \int_{H} |g_{k}\circ \phi'-g_{k}\circ \phi' \circ  h|\varphi_{k,n}(h) dm(h) \Bigr) { \big|} \I_{S_1}\Bigr) (x)=F_{k,n}(x) 
\end{align*} 
for all $x\in X'''$ and $k,n \in \mathbb{N}$. 

We get that for $x\in X'''$ and $k\in \mathbb{N}$, for every $i\in \mathbb{N}$ there exists big enough $n$ such that $F_{k,n}\leq \frac{1}{i}$. It follows then that for $x\in X'''$ 

\[\int g_k\otimes f_{2}\otimes \cdots \otimes f_d d\lambda=\lim_{n\to\infty}\int g_k\otimes f_{2}\otimes \cdots \otimes f_d d\mu^F_{\varphi_{n,k}}=\int g_k \circ\phi'\otimes f_{2}\otimes \cdots \otimes f_d d\mu^{F}_{x}.\] 
for all $k\in \mathbb{N}$. A density argument allows to deduce $\lambda=(\phi'\times\id\times\cdots\times \id)_{\ast}\mu^F_{x}$.

Now fix $f_2,\ldots,f_d\in L^{\infty}(\mu), \epsilon >0,$ and let $f_1$ be measurable with respect to $Y$  with $\|f_1\|_{\infty}\leq 1$. By Birkhoff Theorem and telescoping, for any continuous function in a dense countable family (for example for the family $ \{g_k\}_{k\in \mathbb{N}}$), we have the bound 

\begin{equation}\label{equa4}
\begin{split}
&\quad\limsup_{N\to\infty}\left \vert\frac{1}{N}\sum_{n=0}^{N-1} f_{1}(\phi'(S_1^nx))f_{2}(S_2^nx)\cdots f_d(S_d^n x)- g_{k}(\phi'(S_1^nx))f_2(S_2^nx)\cdots  f_{d}(S_d^nx)  \right \vert
\\&\leq \mathbb{E}(\vert f_1\circ \phi'-g_k\circ \phi'\vert \mid \I_{S_1})(x)
\end{split}
\end{equation}
for $\mu$-a.e. $x\in X$. 

So 
\begin{equation}\label{equa5}
\begin{split}
&\quad\limsup_{N\to\infty}\left \vert\frac{1}{N}\sum_{n=0}^{N-1} f_{1}(\phi'(S_1^nx))f_{2}(S_2^nx)\cdots f_d(S_d^n x)-  \int f_{1}\circ \phi'\otimes f_{2}\otimes\cdots \otimes f_d d\mu^F_x   \right \vert
\\ &\leq \limsup_{N\to\infty}\left \vert\frac{1}{N}\sum_{n=0}^{N-1} f_{1}(\phi'(S_1^nx))f_{2}(S_2^nx)\cdots f_d(S_d^n x)- g_{k}(\phi'(S_1^nx))f_2(S_2^nx)\cdots  f_{d}(S_d^nx)  \right \vert \\
& +  \left \vert \int (f_1\circ\phi' \otimes f_{2}\otimes\cdots \otimes f_d- g_{k}\circ \phi' \otimes f_{2}\otimes\cdots \otimes f_d)d\mu^F_x   \right \vert
\\ & \leq 2(\mathbb{E}(\vert f_1\circ \phi'-g_k\circ \phi' \vert \mid \I_{S_1})(x)
\end{split}
\end{equation}

Let {\small \[E_{\epsilon}=\left \{x \in X: \limsup_{N\to \infty} \left \vert \frac{1}{N}\sum_{n=0}^{N-1}f_1(\phi' S_1^nx)f_2(S_2^n(x))\cdots f_d(S_d^n x)- \int f_{1}\circ \phi'\otimes f_{2}\otimes\cdots \otimes f_d d\mu^F_x \right\vert>2 \epsilon \right \}. \]}
Then $$\mu(E_{\epsilon})\leq \mu(\{x: \mathbb{E}(\vert f_1\circ \phi'-g_k\circ\phi'\vert \mid \I_{S_1})(x)\geq \epsilon \}).$$ 

Let $0<\delta<\epsilon$  and let $k\in \mathbb{N}$ with $\|f_1-g_k\|_{1}\leq \delta^2$. The Markov inequality implies that  
$$\mu(E_{\epsilon})\leq \frac{\|f_1\circ \phi'-g_k\circ\phi'\|_{1}}{\epsilon} \leq \delta.$$ Since $\delta$ is arbitrary we have that $\mu(E_{\epsilon})=0$. Hence $\bigcup_{n\in \N} E_{1/n}$ is a set of 0 measure, which means that 

\[\frac{1}{N}\sum_{n=0}^{N-1}f_1(\phi'S_1^nx)f_2(S_2^n(x))\cdots f_d(S_d^n x) \]
converges $\mu$-a.e. $x\in X$ to $\int f_1\circ \phi'\otimes \cdots f_d d\mu^F_{x}$ as $N\to\infty$. 
\end{proof}

The following proposition follows from a standard limit argument:
\begin{prop}\label{Prop:inv}
	Let $(X,\mu,S_1,\ldots,S_d)$ be a system. Let $\phi$ be a factor map of $X$ which is the inverse limit of a sequence of factor maps $\{\phi_{n}\}_{n\in\N}$. If $(\phi_{n},\id,\dots,\id)$ is good for all $n\in\N$, then so is $(\phi,\id,\dots,\id)$.
\end{prop}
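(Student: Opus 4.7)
The plan is to mimic the Borel--Cantelli / martingale approximation argument already used in Lemma \ref{fd_appro}. Since $\phi$ is the inverse limit of the $\phi_n$, the corresponding factor $\sigma$-algebras $\mathcal{Y}_n$ increase to $\mathcal{Y}$, and the martingale convergence theorem gives $g_n := \mathbb{E}(f_1 \mid \mathcal{Y}_n) \to f_1$ in $L^1(\mu)$ whenever $f_1$ is measurable with respect to $\mathcal{Y}$. After passing to a subsequence we may assume $\|f_1 - g_n\|_1 \leq 2^{-n}$. Because each $g_n$ is measurable with respect to $\mathcal{Y}_n$ and $(\phi_n,\id,\dots,\id)$ is good by hypothesis, the averages
\[ S_{n,N}(x) := \frac{1}{N}\sum_{m=0}^{N-1} g_n(S_1^m x) f_2(S_2^m x)\cdots f_d(S_d^m x) \]
converge for $\mu$-a.e.\ $x\in X$ as $N\to\infty$, for every $n\in\N$.

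Next I would estimate the difference between the desired average $S_N(x)$ (with $f_1$ in place of $g_n$) and $S_{n,N}(x)$. Assuming $\|f_i\|_\infty \leq 1$, the telescoping inequality together with Birkhoff's ergodic theorem applied to $|f_1 - g_n|$ and the transformation $S_1$ yields, for $\mu$-a.e.\ $x\in X$,
\[ \limsup_{N\to\infty} |S_N(x) - S_{n,N}(x)| \leq \mathbb{E}(|f_1 - g_n| \mid \mathcal{I}_{S_1})(x). \]
Define $A_n = \{x : \mathbb{E}(|f_1 - g_n|\mid \mathcal{I}_{S_1})(x) > 1/n\}$. By Markov's inequality and the $L^1$-contraction property of conditional expectation, $\mu(A_n) \leq n\|f_1 - g_n\|_1 \leq n\cdot 2^{-n}$, which is summable. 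The Borel--Cantelli lemma gives $\mu(\limsup_n A_n)=0$, so on a set of full measure the quantity $\mathbb{E}(|f_1 - g_n|\mid \mathcal{I}_{S_1})(x)$ is eventually less than $1/n$.

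The conclusion follows exactly as in Lemma \ref{fd_appro}: fix $x$ in the full-measure set where (i) $S_{n,N}(x)$ converges as $N\to\infty$ for every $n$ and (ii) $x\notin A_n$ for all sufficiently large $n$. Given $\delta>0$, pick $n > 1/\delta$ with $x\notin A_n$. Choose $N_0$ so that for $N\geq N_0$ we have $|S_N(x) - S_{n,N}(x)| \leq 2/n$, and $N_1 \geq N_0$ so that $|S_{n,M}(x) - S_{n,N}(x)| < \delta$ for $M,N \geq N_1$. Then a triangle inequality gives $|S_M(x) - S_N(x)| < 5\delta$ for $M,N\geq N_1$, so $\{S_N(x)\}$ is Cauchy and hence converges.

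The main obstacle, as in Lemma \ref{fd_appro}, is that pointwise convergence does not pass through $L^1$-limits for free: one must simultaneously control the error $|S_N - S_{n,N}|$ uniformly in $N$ and exploit the a.e.\ convergence of $S_{n,N}$. The Borel--Cantelli step using the summability $\sum n\cdot 2^{-n} < \infty$ is what makes the approximation work pointwise rather than merely in measure, and this is precisely why we pass to a subsequence with geometrically decaying $L^1$-error.
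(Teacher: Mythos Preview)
Your proof is correct and is precisely the ``standard limit argument'' the paper alludes to without spelling out: it is the same martingale/Borel--Cantelli scheme as Lemma \ref{fd_appro}, specialized to the first coordinate and with the approximants $g_n=\mathbb{E}(f_1\mid\mathcal{Y}_n)$ supplied by the increasing tower of factor $\sigma$-algebras. Nothing further is needed.
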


The next lemma is standard 

\begin{lem}\label{Lem:ind}
	If Theorem \ref{THM:Averagesdistal} holds for $d-1$, then any tuple $\vec{\phi}=(\phi_1,\ldots,\phi_d)$ is good if $\phi_i$ is a factor of $\I_{S_i}\bigvee_{j\neq i} \I_{S_i^{-1}S_j}$ for some $i=1,\ldots,d$ 
\end{lem}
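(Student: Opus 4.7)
The plan is to reduce the $d$-fold average to a $(d-1)$-fold average on the sub-action generated by the remaining transformations by exploiting the invariance structure encoded in $\phi_i$. Without loss of generality, after relabeling, I will assume $i=d$, so that $\phi_d$ is a factor of $\I_{S_d}\bigvee_{j\neq d}\I_{S_d^{-1}S_j}$. Since any $f_d$ measurable with respect to $\phi_d$ is in particular measurable with respect to this join of invariant $\sigma$-algebras, I would first approximate $f_d$ in $L^1(\mu)$ by finite linear combinations of products $g_0\cdot g_1\cdots g_{d-1}$, where $g_0$ is $S_d$-invariant and $g_j$ is $S_d^{-1}S_j$-invariant for $1\leq j\leq d-1$; such products span a dense subspace of $L^1$ of the join.

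For each such product the key observation is that $g_0(S_d^n x)=g_0(x)$ and $g_j(S_d^n x)=g_j(S_j^n x)$ for $j<d$, the latter because $S_d^{-n}S_j^n$ preserves $g_j$. Substituting into the original average rewrites it as
\[ g_0(x) \cdot \frac{1}{N}\sum_{n=0}^{N-1}\prod_{j=1}^{d-1}(f_j g_j)(S_j^n x), \]
which is a $(d-1)$-fold ergodic average of bounded functions along the commuting transformations $S_1,\ldots,S_{d-1}$ on $(X,\mu)$. Now I would invoke the inductive hypothesis: since the ambient system is distal, the sub-action $(X,\mu,S_1,\ldots,S_{d-1})$ is also measurably distal, and after taking the ergodic decomposition with respect to $\langle S_1,\ldots,S_{d-1}\rangle$, the $(d-1)$-case of Theorem \ref{THM:Averagesdistal} applies in every ergodic component, yielding the pointwise convergence $\mu$-a.e. for every one of these products.

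To pass from the dense family of products back to an arbitrary $f_d$, I would apply the approximation machinery already established for the cubic setting in Lemma \ref{fd_appro}, suitably adapted to multiple averages. Concretely, I would choose product approximants $\tilde f_d^{(n)}$ with $\|\tilde f_d^{(n)}-f_d\|_1\leq 2^{-n}$; the discrepancy between the averages associated with $f_d$ and with $\tilde f_d^{(n)}$ is controlled pointwise by $\mathbb{E}(|f_d-\tilde f_d^{(n)}|\mid \I_{S_d})(x)$ through telescoping and the Birkhoff Ergodic Theorem along $S_d$, and a Markov/Borel--Cantelli argument then upgrades pointwise convergence from the dense family to the general $f_d$.

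The main technical point is the invocation of the inductive hypothesis on the sub-action $(X,\mu,S_1,\ldots,S_{d-1})$: one must argue that distality is preserved both under restriction to a subgroup and under ergodic decomposition. This is handled by the separating-sieve characterization of distality, since any separating sieve for the full $\Z^d$-action remains a separating sieve for a subgroup action, and by standard disintegration arguments yielding distality of almost every ergodic component.
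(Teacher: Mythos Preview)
Your proposal is correct and follows essentially the same route as the paper: reduce by density to the case where $f_i$ is a product of functions invariant under $S_i$ and under $S_i^{-1}S_j$, rewrite the average as a $(d-1)$-fold average in the remaining transformations, and invoke the inductive hypothesis. Your version is considerably more detailed than the paper's two-line sketch---in particular you spell out the telescoping/Birkhoff/Borel--Cantelli argument for the $L^1$-approximation step and you justify why distality passes to the sub-action and to its ergodic components---whereas the paper simply writes ``by a density argument'' and ``the result follows by assumption.''
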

\begin{proof}
	By a density argument, it suffices to show the statement when $f_i$ is the product of $g_{j,i}$, $j=1,\ldots,d$, where $g_{i,i}$ is $\I_{S_i}$ measurable and $g_{j,i}$, $j\neq i$ is $\I_{S_i^{-1}S_j}$ measurable. In this case,
	\[\frac{1}{N}\sum_{n=0}^{N-1} f_1(S_1^nx)\cdots f_d(S_d^nx)=g_{i,i}(x)\frac{1}{N}\sum_{n=0}^{N-1} \prod_{j\neq i} g_{i,j}(S_j^nx)\]
	and the result follows  by assumption.
\end{proof}

The following lemma is a direct generalization of Lemmas 3.14 and 3.15 in \cite{Chu}. We omit the proof.

\begin{lem} \label{Lem:ExpectationMeasure}
	Let $(X,\mu,S_1,\ldots,S_d)$ be a measure preserving system and $\mu=\int \mu_x d\mu(x)$ be the ergodic decomposition of $\mu$. For a bounded function $f$ on $X$, let $\widetilde{f}$ be a version of $\mathbb{E}(f\mid \bigvee \mathcal{I}_{T_i})$. Then for $\mu$-a.e. $x\in X$, we have 
	\[\mathbb{E}_{\mu_x}(f\mid  \mathcal{I}_{T_i})=\widetilde{f}\quad \mu_x-a.e.   \] 
	
	(Here $\mathbb{E}_{\mu_x}(f\mid  \mathcal{I}_{T_i})$ is the conditional expectation of $f$ with respect to the measure $\mu_x$).
\end{lem}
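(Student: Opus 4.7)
The plan is to mirror the standard disintegration arguments of \cite[Lemmas 3.14 and 3.15]{Chu}, where the analogous statement for a single transformation is established. The argument will verify, on $\mu$-a.e.\ ergodic component $(X,\mu_x)$, the two defining properties that characterize the conditional expectation $\mathbb{E}_{\mu_x}(f\vert \mathcal{I}_{T_i})$: measurability with respect to $\mathcal{I}_{T_i}$ and the integral identity against $\mathcal{I}_{T_i}$-measurable test functions.

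Measurability of $\widetilde{f}$ with respect to $\mathcal{I}_{T_i}$ follows from its measurability with respect to $\bigvee_j \mathcal{I}_{T_j}$, which restricts appropriately to each fiber. For the integral identity, fix a bounded $\mathcal{I}_{T_i}$-measurable function $g$; I would show that $\int gf\, d\mu_x = \int g\widetilde{f}\, d\mu_x$ for $\mu$-a.e.\ $x$. Integrating this assertion in $x$ against any bounded invariant test function $h$ (i.e., $h$ measurable with respect to the $\sigma$-algebra of the disintegration) and using Fubini reduces it to the global identity $\int hg f\, d\mu = \int hg \widetilde{f}\, d\mu$; this in turn holds because $hg$ lies in $\bigvee_j \mathcal{I}_{T_j}$, so it is a valid test function against which $\widetilde{f}$ has the same integral as $f$ by the very definition of $\widetilde{f}$ as $\mathbb{E}_\mu(f\vert \bigvee_j \mathcal{I}_{T_j})$.

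The main obstacle is that the $\mu$-null exceptional set arising from the fiberwise integral identity a priori depends on the test function $g$. The standard fix is to choose a countable dense family $\{g_n\}$ of test functions, using countable generation of $\mathcal{I}_{T_i}$ on the standard Borel spaces we work with, and then to combine the countably many exceptional null sets into a single one. Outside this combined null set, $\widetilde{f}$ satisfies both defining properties of $\mathbb{E}_{\mu_x}(f\vert \mathcal{I}_{T_i})$, yielding the asserted $\mu_x$-a.e.\ equality for $\mu$-a.e.\ $x$.
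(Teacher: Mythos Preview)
Your proposal aligns with the paper, which omits the proof entirely and simply notes that the lemma is a direct generalization of \cite[Lemmas~3.14 and 3.15]{Chu}; mirroring that argument is precisely what is intended.

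One caveat: your measurability step as written is backwards---measurability of $\widetilde{f}$ with respect to the larger $\sigma$-algebra $\bigvee_j \mathcal{I}_{T_j}$ does not imply measurability with respect to a single $\mathcal{I}_{T_i}$. This confusion seems to stem from a typo in the lemma statement itself: the usage in Proposition~\ref{Lem:Magic extension} shows that the intended $\sigma$-algebra on the left-hand side is $\bigvee_i \mathcal{I}_{T_i}$, not a single $\mathcal{I}_{T_i}$. With that correction, your measurability claim becomes the tautology it should be, and the rest of your argument (the integral identity via Fubini and the countable-density trick to handle the dependence of the null set on the test function) goes through as in Chu.
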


We need the following proposition, which is similar to Proposition 4.8 in \cite{DS2}.
  
\begin{prop} \label{Lem:Magic extension} 
Let $(Y,\nu,S_1,\ldots,S_d)$ be an ergodic distal system with commuting transformations. Then there exists an ergodic distal extension $(Y,\mu,S_1$ $,\ldots,S_d)$ which is magic for $T_1,\ldots,T_d$. 
\end{prop}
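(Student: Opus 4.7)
The plan is to realize the desired extension concretely using Theorem \ref{Thm:MagicExtension}, namely as the Host cubical system $(Y^{[d]}, \nu_{T_1,\ldots,T_d}, \mathcal{F}_1^1, \ldots, \mathcal{F}_d^1)$, then pass to a suitable ergodic component, and verify that distality is preserved by tracking the Furstenberg--Zimmer tower of $Y$.

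First I would show that $(Y^{[d]}, \nu_{T_1,\ldots,T_d})$ is a distal extension of $(Y, \nu)$, where the factor map is projection to the $(1,\ldots,1)$-coordinate. Expressing $Y$ as an inverse limit of $Y_\alpha$ with $Y_0$ trivial and each successor step $Y_{\alpha+1} = Y_\alpha \times_\rho H/\Gamma$ isometric, the key observation is that $Y_{\alpha+1}^{[d]}$ is an isometric extension of $Y_\alpha^{[d]}$ with fiber $(H/\Gamma)^{[d]}$ and cocycle induced coordinate-wise by $\rho$. Moreover, the Host measure $\nu_{T_1,\ldots,T_d}^{(\alpha+1)}$ on $Y_{\alpha+1}^{[d]}$ is a natural lift of $\nu_{T_1,\ldots,T_d}^{(\alpha)}$ respecting this isometric structure, because the iterative definition of the Host measures via conditional expectations over $\mathcal{I}_{T_k^{[k-1]}}$ is functorial under factor maps. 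Limit ordinals are handled by inverse limits, yielding a distal tower from the trivial system up to $(Y^{[d]}, \nu_{T_1,\ldots,T_d})$.

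Next I would pass to an ergodic component. Since $\nu$ is ergodic on $Y$ and the projection $\pi$ to the $(1,\ldots,1)$-coordinate sends $\nu_{T_1,\ldots,T_d}$ to $\nu$, the ergodic decomposition of $\nu_{T_1,\ldots,T_d}$ yields almost surely an ergodic component $\mu$ with $\pi_\ast \mu = \nu$; fix one such $\mu$. The distal tower established above restricts to a distal tower of $(Y^{[d]}, \mu)$, producing an ergodic distal extension of $(Y, \nu)$. To show $\mu$ remains magic for $\mathcal{F}_1^1, \ldots, \mathcal{F}_d^1$, I would combine the magicness of $\nu_{T_1,\ldots,T_d}$ from Theorem \ref{Thm:MagicExtension} with the ergodic-decomposition identity for Host seminorms (Theorem \ref{ine}(6)), checking compatibility of conditional expectations with respect to $\bigvee_i \mathcal{I}_{\mathcal{F}_i^1}$ along the decomposition.

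The main obstacle I anticipate is verifying that magicness passes to the ergodic component, since the invariant $\sigma$-algebra inside an ergodic component can be strictly richer than the one inherited from the global system. If this direct argument runs into trouble, the fallback is to apply Theorem \ref{Thm:ext} to the idempotent class $\mathcal{C} = \bigvee_{i=1}^d Z^{\{T_i\}}$, arguing that the sated-extension construction of that theorem can be carried out within the category of distal extensions of $Y$: this category is closed under the joinings and inverse limits used in the construction, and the maximal $\mathcal{C}$-factor of any distal extension is itself a factor, hence distal. By Lemma \ref{lem:SatedAndMagic}, the resulting ergodic distal $\mathcal{C}$-sated extension is automatically magic for $T_1, \ldots, T_d$.
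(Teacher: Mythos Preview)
Your overall strategy---form the Host cube system $(Y^{[d]},\nu_{T_1,\ldots,T_d},\mathcal{F}_1^1,\ldots,\mathcal{F}_d^1)$, pass to an ergodic component, and verify that magicness survives---is exactly the paper's approach, and your handling of the magicness step is correct: the paper uses precisely the compatibility of conditional expectations with ergodic decomposition that you anticipate (their Lemma~\ref{Lem:ExpectationMeasure}) together with Theorem~\ref{ine}(6), applied to a countable dense family of functions. So the ``main obstacle'' you flag is real but resolvable, and your first plan succeeds without needing the fallback via Theorem~\ref{Thm:ext}.

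The one genuine gap is in your argument for distality of the cube system. You claim that if $Y_{\alpha+1}=Y_\alpha\times_\rho H/\Gamma$ is isometric, then $(Y_{\alpha+1}^{[d]},\nu^{(\alpha+1)}_{T_1,\ldots,T_d})$ is an isometric extension of $(Y_\alpha^{[d]},\nu^{(\alpha)}_{T_1,\ldots,T_d})$ with fiber $(H/\Gamma)^{[d]}$. For this to match the paper's definition of isometric extension, the Host measure $\nu^{(\alpha+1)}_{T_1,\ldots,T_d}$ would have to disintegrate over $\nu^{(\alpha)}_{T_1,\ldots,T_d}$ with Haar measure $m^{[d]}$ on the fibers. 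This is not a consequence of functoriality of the Host construction under factor maps: functoriality only tells you that $\nu^{(\alpha+1)}_{T_1,\ldots,T_d}$ projects to $\nu^{(\alpha)}_{T_1,\ldots,T_d}$, not what the conditional measures on fibers look like. Since the Host measure is built from iterated relatively independent joinings over invariant $\sigma$-algebras that need not interact simply with the skew-product structure, this claim would require a separate argument that you have not supplied.

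The paper avoids this issue entirely by working with separating sieves instead of the tower. If $(A_i)_{i\in\N}$ is a separating sieve for $(Y,\nu)$, one checks directly that $(A_i^{2^d})_{i\in\N}$ is a separating sieve for $(Y^{[d]},\nu_{T_1,\ldots,T_d},\mathcal{G}_{T_1,\ldots,T_d})$: positivity follows from Jensen's inequality applied inductively, and the separation property holds coordinate-wise. This is a two-line verification and bypasses any analysis of how the Host measure sits over the Furstenberg--Zimmer tower.
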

 
\begin{proof}
We first consider the cubic extension $\pi\colon(Y^{[d]},\nu_{T_1,\ldots,T_d},\mathcal{F}_1^1,\ldots,\mathcal{F}_d^1)\to (Y,\nu,T_1,\ldots,T_d$, where the factor map $\pi$ projects $Y^{[d]}$ onto the last coordinate. In $(Y^{[d]},\mu_{T_1,\ldots,T_d})$ we consider the action of the group $\mathcal{G}_{T_1,\ldots,T_d}$ that we recall is the group generated by both upper and lower face transformations $\mathcal{F}_1^0,\ldots,\mathcal{F}_d^0$, $\mathcal{F}_1^1,\ldots,\mathcal{F}_d^1$ (see Section \ref{Sec:HostMeasures}). We claim that $(Y^{[d]},\nu_{T_1,\ldots,T_d},\mathcal{G}_{T_1.\ldots,T_d})$ is ergodic and distal. The ergodicity follows directly from page 12 in \cite{H} or can be derived as a consequence of Theorem \ref{Thm:StrictlyModel}. To show distality, we consider $(A_i)_{i\in \N}$ a separating sieve for $(Y,\nu,T_1,\ldots,T_d)$ and prove that $A_i^{2^d}=(A_i\times \cdots \times A_i)_{i\in \N}$ is a separating sieve for $(Y^{[d]},\nu_{T_1,\ldots,T_d},\mathcal{G}_{S_1,\ldots,S_d})$.

The Jensen inequality implies that
\[\nu_{T_1}(A_i\times A_i)^{1/2}=\Bigl(\int_{X} |\mathbb{E}(1_{A_i}\vert I_{T_1})|^2 d\nu \Bigr)^{1/2} \geq \int_{X} \mathbb{E}(1_{A_i}\vert I_{T_1})d\nu=\nu(A_i)>0.\] 
Similarly 
\[\nu_{T_1,T_2}(A_i\times A_i\times A_i\times A_i)^{1/4}\geq \nu_{T_1}(A_i\times A_i)^{1/2}\geq \nu(A_i)>0.\]
Thus
\[0<\nu(A_i)^{2^d}\leq \nu_{T_1,\ldots,T_d}(A_i\times\cdots\times A_i)\leq \nu_{T_1,\ldots,T_d}(A_i\times X\times \cdots \times X)=\nu(A_i)\to 0.\] 

On the other hand, if two points $\vec{x}$ and $\vec{y}$ in $Y^{[d]}$ are such that for all $i\in\mathbb{N}$, there exists $\vec{g}\in \mathcal{G}_{T_1,\ldots,T_d}$ with $ \vec{g}\vec{x}$, $\vec{g}\vec{y} \in A_i\times \cdots\times A_i$, then by the distality on each coordinate  (and that $\{A_i\}_{i\in \mathbb{N}}$ is a separating sieve), we have that  $\vec{x}=\vec{y}$. This finishes the claim.

\

Let $\nu_{T_1,\ldots,T_d}=\int \nu_{\vec{x}}~ d\nu_{T_1,\ldots,T_d}(\vec{x})$ be the ergodic decomposition of $\nu_{T_1,\ldots,T_d}$ under the face transformations $\mathcal{F}_1^1,\ldots, \mathcal{F}_d^1$. The map $\pi$ (the projection onto the last coordinate of $Y^{[d]}$)  maps $\nu_{T_1,\ldots,T_d}$ onto $\nu$ and also maps $\nu_{\vec{x}}$ to $\nu$ for $\nu_{T_1,\ldots,T_d}$-a.e. $\vec{x}\in Y^{[d]}$ by ergodicity. 

We consider a countable family $\{f_k : k\in \N\}$ on $Y^{[d]}$ which is dense in $L^p(\nu)$ for any $p\in [0,+\infty)$ and every $\mathcal{G}_{T_1,\ldots,T_d}$-invariant measure. By a density argument, to show that $\nu_{T_1,\ldots,T_d}$-a.e $x\in X$ the system $(Y^{[d]},\nu_{\vec{x}},\mathcal{F}_1^1,\ldots, \mathcal{F}_d^1$) is magic, it suffices to show that $\normm{f_k-\mathbb{E}_{\nu_{\vec{x}}}(f_k \mid \bigvee_{i=1}^d \I_{\mathcal{F}_i^1})}_{\nu_{\vec{x}},\mathcal{F}_1^1,\ldots, \mathcal{F}_d^1}=0$ for all $k\in \N$.

By Lemma \ref{Lem:ExpectationMeasure}, if $\widetilde{f}_k$ is a measurable representative of $\mathbb{E}(f_k \mid \bigvee_{i=1}^d \I_{T_i})$, then $\mathbb{E}_{\nu_{\vec{x}}}(f_k \mid \bigvee_{i=1}^d \I_{\mathcal{F}_i^1})=\widetilde{f}_k$ $\mu_{\vec{x}}$-a.e. for $\nu_{T_1,\ldots,T_d}$-a.e. $\vec{x}\in Y^{[d]}$ for all $k\in \N$.  

Since $(Y^{[d]},\nu_{T_1,\ldots,T_d},\mathcal{F}_1^1,\ldots, \mathcal{F}_d^1)$ is magic (Theorem \ref{Thm:MagicExtension}), by Theorem \ref{ine}, we have that
\[0=\normm{f_k-\widetilde{f}_k}^{2^d}_{\nu_{T_1,\ldots,T_d},\mathcal{F}_1^1,\ldots, \mathcal{F}_d^1}=\int   \Bignormm{f_k-\mathbb{E}_{\nu_x}(f_k \mid \bigvee_{i=1}^d \I_{\mathcal{F}_i^1} )}^{2^d}_{\nu_{\vec{x}},\mathcal{F}_1^1,\ldots, \mathcal{F}_d^1}d\nu_{T_1,\ldots,T_d}(\vec{x}) \]
for every $k\in \N$. Hence $\Bignormm{f_k-\mathbb{E}_{\nu_x}(f_k \mid \bigvee_{i=1}^d \I_{\mathcal{F}_i^1} )}^{2^d}_{\nu_{\vec{x}},\mathcal{F}_1^1,\ldots, \mathcal{F}_d^1}=0$  for $\nu_{T_1,\ldots,T_d}$-a.e. $\vec{x}\in Y^{[d]}$ for all $k$. This finishes the proof.
\end{proof}

\begin{proof}[Proof of Theorem \ref{THM:Averagesdistal}]
Let $(X,\mu,S_1,\ldots,S_d)$ be an ergodic distal system with commuting transformations. We prove the pointwise convergence of multiple averages inductively for the number of transformations $S_i$ considered. The case $d=1$ is just the Birkhoff Theorem. We now assume the conclusion hold for $i-1$ and prove it for $i\leq d$, i.e. we consider the system $(X,\mu,S_1,\ldots,S_i)$. We may decompose $\mu$ as $\mu=\int \mu_{x}d\mu(x)$ into $\langle S_1,\ldots,S_i \rangle$ ergodic components. For $\mu$-a.e $x\in X$, the measure $\mu_x$ is ergodic and distal for $S_1,\ldots,S_i$ (for the distality property see for instance \cite{DS2} Prop 4.9 for a proof of this fact). 

It suffices to prove that Theorem \ref{THM:Averagesdistal} holds for $\mu_{x}$ for $\mu$-a.e $x\in X$.
Fix $x\in X$. We can apply Proposition \ref{Lem:Magic extension} to find an extension of $\mu_{x}$ which is ergodic, distal and magic for $T_{1},\dots,T_{i}$. 
By Proposition \ref{Prop:iso} and \ref{Prop:inv}, it suffices to show that $(\phi,\id,\dots,\id)$ is good for $\phi\colon X\to \I_{S_1}\bigvee_{j=2}^{i} \I_{S_1^{-1}S_j}$. By induction hypothesis and Lemma \ref{Lem:ind}, we are done. 
\end{proof}

\end{document}